 \DeclareFontFamily{U}{wncy}{}
    \DeclareFontShape{U}{wncy}{m}{n}{<->wncyr10}{}
    \DeclareSymbolFont{mcy}{U}{wncy}{m}{n}
    \DeclareMathSymbol{\Sha}{\mathord}{mcy}{"58}
    \definecolor{ForestGreen}{RGB}{34,139,34}
\newcommand{\green}[1]{\textcolor{ForestGreen}{#1}}
\theoremstyle{plain}
\newtheorem{theorem}{Theorem}[section]
\newtheorem{corollary}[theorem]{Corollary}
\newtheorem{lemma}[theorem]{Lemma}
\newtheorem{remark}[theorem]{Remark}
\newtheorem{proposition}[theorem]{Proposition}
\newtheorem{definition}[theorem]{Definition}
\newtheorem{defn}[theorem]{Definition}
\newtheorem{example}[theorem]{Example}
\numberwithin{theorem}{section}
\newcommand{\Z}{\mathbb{Z}}
\newcommand{\Q}{\mathbb{Q}}
\newcommand{\Qp}{\mathbb{Q}_p}
\newcommand{\C}{\mathbb{C}}
\newcommand{\F}{\mathbb{F}}
\newcommand{\Pp}{\mathfrak{p}}
\newcommand{\Oo}{\mathcal{O}}
\newcommand{\Gg}{\mathcal{G}}
\newcommand{\Gal}{\operatorname{Gal}}
\newcommand{\sgn}{\operatorname{sgn}}
\newcommand{\End}{\operatorname{End}}
\newcommand{\Hom}{\operatorname{Hom}}
\newcommand{\Div}{\operatorname{Div}}
\newcommand{\ord}{\operatorname{ord}}
\newcommand{\GL}{\operatorname{GL}}
\newcommand{\Fil}{\operatorname{Fil}}
\newcommand{\nf}{\normalfont}
\newcommand{\Mod}[1]{\ \mathrm{mod}\ #1}
\newcommand{\cyc}{{\mathrm{cyc}}}
\newcolumntype{?}{!{\vrule width 1pt}}
\newcommand{\fM}{\mathfrak{M}}
\newcommand{\sL}{\mathscr{L}}
\newcommand{\irr}{\mathrm{irr}}
\newcommand{\vp}{\varphi}
\newcommand{\cO}{\mathcal{O}}
\newcommand{\cG}{\mathcal{G}}
\newcommand{\RGL}[1]{{\color{blue}#1}}
\newcommand{\FF}{\mathbb{F}}
\newcommand{\QQ}{\Q}
\newcommand{\ZZ}{\Z}
\newcommand{\Zp}{\Z_p}
\newcommand{\Up}{\Upsilon}
\newcommand{\Tw}{\mathrm{Tw}}
\newcommand{\fm}{\mathfrak{m}}
\newcommand{\HH}{\mathbb{H}}
\newcommand{\Tam}{\mathrm{Tam}}
\newtheorem{lthm}{Theorem} 
\title[Mazur--Tate elements of modular forms with Serre weight $>2$]{Mazur--Tate elements of non-ordinary modular forms with Serre weight larger than two}
\author[R.~Gajek-Leonard]{Rylan Gajek-Leonard}
\address[Gajek-Leonard]{Department of Mathematics\\
Union College\\
Bailey Hall 206B\\
Schenectady, NY, 12308\\
USA}
\email{gajekler@union.edu}
\author[A.~Lei]{Antonio Lei}
\address[Lei]{
Department of Mathematics and Statistics\\
University of Ottawa\\
150 Louis-Pasteur Pvt\\
Ottawa, ON, K1N 6N5\\
Canada}
\email{antonio.lei@uottawa.ca}
\subjclass[2020]{Primary 11R23; Secondary 11F33}
\keywords{Iwasawa theory, modular forms, Mazur--Tate elements, non-ordinary primes}
\begin{document}

\begin{abstract}
Fix an odd prime $p$ and let $f$ be a non-ordinary eigen-cuspform of weight $k$ and level coprime to $p$. Assuming $p>k-1$, we compute asymptotic formulas for the Iwasawa invariants of the Mazur--Tate elements attached to $f$ in terms of the corresponding invariants of the signed $p$-adic $L$-functions. By combining this with a version of mod $p$ multiplicity one, we also obtain descriptions of the $\lambda$-invariants of Mazur--Tate elements attached to certain higher weight modular forms with Serre weight $<p+1$, generalizing results of Pollack and Weston in the Serre weight 2 case. 
\end{abstract}

\maketitle

\section{Introduction}

Fix an odd prime $p$ and let $f=\sum a_n q^n$ be a normalized cuspidal eigen-newform of weight $k\geq 2$ and level $\Gamma=\Gamma_1(N)$. We assume $p\nmid N$. Throughout this article, we fix an embedding $\overline\QQ\hookrightarrow\overline{\Qp}$ and consider $f$ as a modular form in $S_k(\Gamma,\overline{\Qp})$. 
Let $K/\Q_p$ be a finite extension that contains the image of $a_n$ (under our fixed embedding) for all $n$.  Let $\cO$ and $\FF$ denote the valuation ring and residue field of $K$, respectively. We write $\rho_f$ for Deligne's $K$-linear $G_\QQ$-representation attached to $f$ and  $\overline\rho_f$ for the associated  $\FF$-linear residual representation. We assume that $\overline\rho_f$ is absolutely irreducible.

We are concerned with the Iwasawa invariants of the $p$-adic Mazur--Tate elements $\Theta_n(f)\in \Oo[G_n]$ attached to $f$, where $G_n\cong\Z/p^n\Z$ is the Galois group of the $n$-th layer of the cyclotomic $\Z_p$-extension of $\Q$. These elements were first defined for elliptic curves in \cite{MT} and were studied for general modular forms in \cite{MTT}. They are of interest because they interpolate the algebraic part of the $L$-values of $f$ twisted by a Dirichlet character factoring through the cyclotomic $\Zp$-extension of $\QQ$. Furthermore, it is expected that these elements encode important arithmetic information on the Selmer groups attached to $f$. It has been shown under certain hypotheses that up to a constant, $\Theta_n(f)$ belongs to the Fitting ideal of the Pontryagin dual of the Selmer group of $f$ over the $n$-th layer of the cyclotomic $\Zp$-extension of $\QQ$ (see \cite{kurihara02,KK,kim22,EPW2}).

 When $f$ is ordinary at $p$ (i.e., $a_p\in\cO^\times$) and $\mu(L_p(f))=0$, the Iwasawa invariants of $\Theta_n(f)$ agree for $n\gg0$ with the counterparts of the $p$-adic $L$-function of $f$; see \cite[Proposition~3.7]{PW}.  

When $f$ is non-ordinary at $p$ (i.e., $a_p\notin\cO^\times$) and $k=2$, Pollack's plus and minus $p$-adic $L$-functions (when $a_p=0$) and Sprung's $\sharp/\flat$ $p$-adic $L$-functions (for general $a_p\notin\cO^\times$) defined in \cite{pollack03,sprung17} can be used to describe the Iwasawa invariants of $\Theta_n(f)$. Assuming that the pairs of $\mu$-invariants of the aforementioned $p$-adic $L$-functions agree, we have 
\begin{equation}
    \label{eq:wt2}
\lambda(\Theta_n(f))=q_n+\lambda^\star(f),\qquad n\gg0,
\end{equation}
where $\star\in\{+,-\}$ or $\{\sharp,\flat\}$ (depending on whether $a_p=0$ or $a_p\ne0$, and the choice of $\star$ depends on the parity of $n$), $\lambda^\star(f)$ is the $\lambda$-invariant of the $\star$ $p$-adic $L$-function attached to $f$, and 
$$
q_n=\begin{cases}p^{n-1}-p^{n-2}+\cdots +p-1 & \quad\text{$n\ge2$ even,}\\
p^{n-1}-p^{n-2}+\cdots +p^2-p & \quad\text{$n\ge3$ odd.}
\end{cases}
$$
See \cite[Theorem~4.1]{PW}.

A central goal of this article is to generalize the description in \eqref{eq:wt2} to higher-weight modular forms satisfying the Fontaine--Laffaille condition (i.e., $p>k-1$). We recall that the construction of $\sharp/\flat$ $p$-adic $L$-functions has been generalized to modular forms of arbitrary weights in \cite{LLZ0} using $p$-adic Hodge theory. In particular, a choice of basis of the Wach module attached to $\rho_f$ is required. In \cite{BFSuper}, a specific choice of basis given in the work of Berger \cite{berger04} is used to study the decomposition of the unbounded Euler systems of Beilinson--Flach elements introduced in \cite{LLZ1,KLZ1,KLZ2} into bounded $\sharp/\flat$ Euler systems. Furthermore, the Iwasawa invariants of the resulting $p$-adic $L$-functions for this basis are closely related to the Bloch--Kato--Shafarevich--Tate groups of $f$, as studied in \cite{LLZ3}, suggesting that these $p$-adic $L$-functions carry arithmetic significance despite the dependence of the choice of a Wach module basis. Our first result shows that, similar to \eqref{eq:wt2}, the Iwasawa invariants of the $\sharp/\flat$ $p$-adic $L$-functions studied in \cite{BFSuper,LLZ3} can be used to describe those of $\Theta_n(f)$. 

\begin{lthm}[Theorem~\ref{thm:FL}]\label{thmA} 
Suppose that $f\in S_k(\Gamma,\overline{\Q_p})$ is a $p$-non-ordinary cuspidal eigen-newform. If $p>k-1$ and $\mu^\sharp(f)=\mu^\flat(f)\neq \infty$ then for $n\gg0$ we have 
\begin{align*}
\mu \big(\Theta_n(f)\big)&=\mu^{\star}(f),\quad\text{and}\\
\lambda\big(\Theta_n(f)\big)&=(k-1)q_n+\lambda^\star(f),
\end{align*}
where $\star=\flat$ if $n$ is odd and $\star=\sharp$ if $n$ is even. 
\end{lthm}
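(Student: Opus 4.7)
My plan follows the template of \cite[Theorem~4.1]{PW} for the weight-two case, replacing Pollack's $\log_p^\pm$ decomposition by the higher-weight analogue built from Berger's basis of the Wach module of $\rho_f$ as developed in \cite{BFSuper,LLZ3}, and then carefully tracking Iwasawa invariants after reducing modulo $\omega_n:=(1+T)^{p^n}-1$.

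First, I would recall the Perrin-Riou-type factorization of the unbounded $p$-adic $L$-function $\sL_p(f)$ attached to $f$. Under $p>k-1$, the Wach module $N(\rho_f)$ admits Berger's basis from \cite{berger04}, and the associated matrix of logarithms yields
$$
\sL_p(f)\;=\;\mathrm{Log}^\sharp\cdot L_p^\sharp(f)\;+\;\mathrm{Log}^\flat\cdot L_p^\flat(f),
$$
with $L_p^\sharp(f),L_p^\flat(f)\in\Lambda:=\cO\lb\Gal(\Qcyc/\QQ)\rb$ bounded and $\mathrm{Log}^\sharp,\mathrm{Log}^\flat$ living in the distribution algebra $\HH$. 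By the interpolation property of $\sL_p(f)$ at finite-order cyclotomic characters, the image of $\sL_p(f)$ in $\cO[G_n]$ (for a fixed moment in $\{0,\dots,k-2\}$) recovers $\Theta_n(f)$ up to explicit corrections.

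Next, I would reduce the decomposition modulo $\omega_n$. The finite-level images $\overline{\mathrm{Log}}_n^\sharp,\overline{\mathrm{Log}}_n^\flat\in\Lambda/\omega_n$ become explicit products of cyclotomic polynomials $\Phi_{p^m}(1+T)$, each appearing with multiplicity $k-1$ reflecting the length of the Hodge filtration of $D_{\mathrm{cris}}(\rho_f)$. Berger's construction organizes these factors by the parity of $m$: $\overline{\mathrm{Log}}_n^\sharp$ collects $\Phi_{p^m}$ for $m$ of one parity and $\overline{\mathrm{Log}}_n^\flat$ for the opposite parity. For a given parity of $n$, one of the two finite-level logarithms therefore has strictly smaller $\lambda$-invariant in $\Lambda/\omega_n$ than the other, and dominates $\Theta_n(f)\bmod\omega_n$. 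A direct count gives
$$
\lambda\big(\overline{\mathrm{Log}}_n^\star\big)\;=\;(k-1)\sum_{\substack{0<m\le n\\ m\text{ of dominant parity}}}\varphi(p^m)\;=\;(k-1)q_n,
$$
and combined with the hypothesis $\mu^\sharp(f)=\mu^\flat(f)\ne\infty$ (which forbids any $\mu$-level cancellation between the two summands) this yields $\mu(\Theta_n(f))=\mu^\star(f)$ and $\lambda(\Theta_n(f))=(k-1)q_n+\lambda^\star(f)$, with $\star$ selected by the parity of $n$ as stated.

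The main obstacle lies in the second step: establishing the precise multiplicities and parity pattern of the reductions $\overline{\mathrm{Log}}_n^\sharp,\overline{\mathrm{Log}}_n^\flat\bmod\omega_n$. This requires an explicit description of the action of $\varphi$ on Berger's basis of $N(\rho_f)$ across the Mahler transform, together with a bookkeeping of how twist characters of conductor $p^m$ annihilate individual cyclotomic factors. Once this combinatorial input is in place, the remaining steps are largely formal, paralleling the argument of \cite{PW} but scaled by the factor $k-1$ attached to the Hodge--Tate length of $\rho_f$.
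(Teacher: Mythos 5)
Your overall architecture matches the paper's: both arguments run the logarithmic-matrix decomposition coming from Berger's Wach-module basis (Theorem~\ref{thm:decomp}), pass to finite level, and extract $(k-1)q_n$ from parity-sorted products of twisted cyclotomic polynomials, each appearing with multiplicity $k-1$. The numerology in your final count is correct. However, the central step, as you describe it, would not go through as written.

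There are two concrete problems. First, the passage from $\sL_p(f)$ to $\Theta_n(f)$ is not a scalar reduction ``up to explicit corrections'': at level $n$ the two unbounded $L$-functions $L_p(f,\alpha)$ and $L_p(f,\beta)$ jointly determine the \emph{pair} $\big(\Theta_n(f),\,-\epsilon(p)p^{k-2}\nu^n_{n-1}\Theta_{n-1}(f)\big)$, and one must invert a $2\times2$ matrix in $\alpha,\beta$ to separate $\Theta_n(f)$ from the corestriction of $\Theta_{n-1}(f)$; this is the content of Proposition~\ref{prop:Qnj-sharpflat}. Second, and more seriously, your domination mechanism misplaces the cyclotomic factors. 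In the row of the finite-level matrix $C_{n,f}$ that computes $\Theta_n(f)$, the coefficient of the non-dominant signed $L$-function is \emph{not} the opposite-parity cyclotomic product --- that product sits in the other row, paired with $\nu^n_{n-1}\Theta_{n-1}(f)$. All one knows about the coefficient multiplying the non-dominant $L$-function is that it vanishes modulo $\varpi$, because \textbf{(n-ord)} makes $P_f^{-1}$ antidiagonal mod $\varpi$ and the $n$-fold product alternates between diagonal and antidiagonal shape (Proposition~\ref{prop:Cnf}). So there are not two cyclotomic summands whose $\lambda$-invariants you can compare; the correct argument works modulo $(\varpi,\omega_n)$, where only one summand survives at all. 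This is also where $\mu^\sharp(f)=\mu^\flat(f)=a$ genuinely enters: it guarantees that $\varpi^{-a}Q_{n,j}(f,\omega^i)$ is integral and that the surviving term has $\mu=0$, so that a congruence modulo $(\varpi,\omega_n)$ pins down both Iwasawa invariants via Lemma~\ref{lem:modpomega}. Finally, \textbf{(FL)} is used not only to invoke Berger's basis but also quantitatively: one needs $(k-1)q_n+\lambda^\star(f)<p^n$ for $n\gg0$, i.e.\ $(k-1)/(p+1)<1$, before that lemma applies; your outline omits this check.
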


As mentioned above, the Mazur--Tate elements associated with $f$ interpolate the $L$-values of $f$ twisted by Dirichlet characters. Theorem~\ref{thmA} can be used to derive an explicit formula for the $p$-adic valuations of the algebraic part of these $L$-values, as given in Theorem~\ref{thm:BK}. In addition, the Bloch--Kato conjecture predicts a link between these $L$-values and the size of both  the Bloch--Kato--Shafarevich--Tate groups and the Tamagawa numbers attached to $f$. In \cite{LLZ3}, formulae regarding the aforementioned algebraic quantities are proven. The $p$-adic valuations derived from Theorem~\ref{thmA} can be regarded as the analytic counterparts of these formulae. See \S\ref{S:BK} for a detailed discussion.

Since Mazur--Tate elements are well-suited for numerical computation, Theorem \ref{thmA} gives a method to asymptotically compute the signed Iwasawa invariants attached to non-ordinary modular forms at primes satisfying the Fontaine--Laffaille condition. We provide tables of the signed $\lambda$-invariants  for various newforms that are non-ordinary at $p\in\{5,7\}$ in \S\ref{section:data}. Our numerical calculations suggest that $\mu(\Theta_n(f))=0$ for $n\gg0$ under the Fontaine--Laffaille condition.

\begin{example}\label{ex:thmA}\nf Let $p=5$ and consider the $p$-non-ordinary newform
 $$
f= q + 3 q^{2} +  q^{4} + 15 q^{5} + O(q^6)\in S_4(\Gamma_0(27)),
 $$
with LMFDB label \href{https://www.lmfdb.org/ModularForm/GL2/Q/holomorphic/27/4/a/b/}{\texttt{27.4.a.b}}. 
 The $\mu$-invariants of $\Theta_n(f)$ for $0\leq n\leq 5$ are all zero and the $\lambda$-invariants are given in the following table: 
\begin{center}
\begin{tabular}{|c||c|c|c|c|c|c|c|}
\hline
 $n$ & 0 & 1 &2&3&4&5\\  \hline
 $\lambda(\Theta_n(f))$ & 0 & 2 &12&62&312 &1562 \\  
  \hline 
\end{tabular}
\end{center}
Thus, Theorem \ref{thmA} suggests that $\lambda^{\sharp}(f)=0$ and $\lambda^{\flat}(f)=2$. 
\end{example}

\begin{remark} \nf  In the $a_p=0$ case, a description of the Iwasawa invariants of Mazur--Tate elements in terms of Pollack's plus and minus $p$-adic $L$-functions can be found in \cite{GL}, where it is shown that for $n\gg 0$ we have
\begin{align}
\label{eq:mu_ap0}\mu \big(\Theta_n(f)\big)&=\mu^{\star}(f)+\iota^\star(f)\quad\text{and}\\
\label{eq:lambda_ap0}\lambda\big(\Theta_n(f)\big)&=(k-1)q_n+\lambda^\star(f)-\iota^\star(f)\phi(p^n).
\end{align}
Here $\star\in \{+,-\}$ depends on the parity of $n$, $\iota^\pm(f)$ is a non-negative integer which is zero at weights $2\leq k\leq p+1$, and $\phi$ is Euler's totient function. See \cite[Theorem~B]{GL}.  Theorem~\ref{thmA} generalizes the above description to non-ordinary forms of any positive slope satisfying the Fontaine--Laffaille condition. (By `slope', we mean the $p$-adic valuation of the $T_p$-eigenvalue.)  We note that the Fontaine--Laffaille hypothesis plays a similar role in the current article as the $a_p=0$ hypothesis in \cite{GL}. Both assumptions allow for a concrete decomposition of the unbounded $p$-adic $L$-functions in terms of certain products of cyclotomic polynomials and $\sharp/\flat$ (or plus/minus) $p$-adic $L$-functions: see \cite[Theorem 5.1]{pollack03} when $a_p=0$ and Theorem~\ref{thm:decomp} in the Fontaine--Laffaille setting. It is these decompositions which ultimately allow one to relate the Iwasawa invariants of the Mazur--Tate elements to those attached to the signed $p$-adic $L$-functions. The assumption on $\mu$-invariants in Theorem~\ref{thmA} comes from the fact that the latter decomposition is only explicit enough for our purposes when considered modulo the maximal ideal of $\cO$ (see Proposition \ref{prop:Cnf}). 
\end{remark}

Still assuming that $f$ is non-ordinary at $p$, suppose that the weight of $f$ is $k>2$ and its Serre weight is 2 (see Definition~\ref{def:SW}), so that $\overline\rho_f$ is isomorphic to $\overline\rho_g$ for some weight 2 form $g$. In this case, Pollack and Weston \cite[Theorems 1 and 2]{PW} proved that 
\[
\lambda(\Theta_n(f))=p^n-p^{n-1}+q_{n-1}+\lambda^\star(g),
\]
assuming that $\overline\rho_f|_{G_{\Qp}}$ is not decomposable, $\mu^\star(g)=0$, and one of the following two conditions are valid:

\vspace{0.2cm}
\begin{itemize}
    \item $k<p^2+1$ (\textit{medium weight} condition);
\vspace{0.2cm}
    \item $\ord_p(a_p)<p-1$ (\textit{small slope} condition).
\end{itemize}
\vspace{0.2cm}
  The proofs of these results rely on a mod $p$ multiplicity one result of Ribet \cite{ribet91} to compare the modular symbols of $f$ and $g$. In light of Theorem~\ref{thmA}, we extend the results of Pollack and Weston to modular forms whose Serre weight is strictly between $2$ and $p+1$ (but the weight of $f$ itself may be greater than $p+1$). Using the mod $p$ multiplicity one result of Edixhoven \cite{edixhoven92}, we can relate the modular symbols of $f$ to those of a modular form $g$ whose weight is between $2$ and $p+1$ under analogues of the medium weight or small slope condition.

\begin{lthm}[Theorem~\ref{thm:mediumweight}]\label{thmB} Let $f\in S_{k_f}(\Gamma,\overline{\Q_p})$ and $g\in S_{k_g}(\Gamma,\overline{\Q_p})$ be $p$-non-ordinary cuspidal eigen-newforms. Suppose that 
\begin{enumerate}
\item[(i)]  $2<k_g<p+1$ and $\mu^\sharp(g)=\mu^\flat(g)=0$,
\item[(ii)] $\overline\rho_f\cong \overline\rho_g$, 
\item[(iii)] $k_f<(k_g-1-\delta)(p+1)$, where $\delta\in\{0,1,2\}$ is an explicit constant that depends only on $p$ and $k_f$ (see Definition \ref{def_delta}).
 \end{enumerate}
 Then for $n\gg0$, we have
  $$
 \lambda\big(\Theta_n(f)\big)=(k_g-1)q_n+\lambda^\star(g),
 $$
 where $\star=\flat$ if $n$ is odd and $\star=\sharp$ if $n$ is even.
\end{lthm}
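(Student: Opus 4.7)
The strategy is to parallel the Serre-weight-$2$ argument of \cite{PW}, but with Edixhoven's mod $p$ multiplicity one in place of Ribet's. The goal is to establish a congruence of the normalized Mazur--Tate elements $\Theta_n(f)$ and $\Theta_n(g)$ modulo the maximal ideal $\fm$ of $\cO$, after which Theorem~\ref{thmA} applied to $g$ (whose weight lies in the Fontaine--Laffaille range) will produce the desired $\lambda$-invariant.

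First I would invoke Edixhoven's result \cite{edixhoven92}: under the absolute irreducibility of $\overline\rho_g\cong\overline\rho_f$ together with $2<k_g<p+1$, the mod $p$ eigenspace in the space of modular symbols with coefficients in $\mathrm{Sym}^{k_g-2}\FF^2$ attached to the system of Hecke eigenvalues of $\overline\rho_g$ is one-dimensional over $\FF$; in particular, the reduction mod $\fm$ of the modular symbol of $g$ is canonical up to $\FF^\times$. The main step is then to show that the weight-$k_f$ modular symbol of $f$, valued in $\mathrm{Sym}^{k_f-2}\cO^2$, can be pushed into the weight-$k_g$ setting mod $\fm$ via iterates of a modular-symbol analogue of Serre's $\theta$-operator. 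Hypothesis (iii), $k_f<(k_g-1-\delta)(p+1)$, is precisely what allows this iterated reduction to remain in the range where the operator is Hecke-equivariant and injective on the relevant eigenspaces, with $\delta\in\{0,1,2\}$ absorbing boundary-case corrections depending on $k_f$ modulo $p-1$. Because $\overline\rho_f\cong\overline\rho_g$, the resulting mod $\fm$ class must lie in the one-dimensional eigenspace from Edixhoven's theorem and is therefore a unit multiple of the reduction of the modular symbol of $g$. Specializing at Dirichlet characters of conductor $p^n$ translates this into
\[
\Theta_n(f)\equiv u\cdot\Theta_n(g)\pmod{\fm}
\]
for some $u\in\cO^\times$ and all $n$ sufficiently large for the Euler-at-$p$ factors (which are units mod $\fm$) to stabilize.

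Since $g$ satisfies the Fontaine--Laffaille condition $p>k_g-1$, Theorem~\ref{thmA} together with $\mu^\sharp(g)=\mu^\flat(g)=0$ yields $\mu(\Theta_n(g))=0$ and $\lambda(\Theta_n(g))=(k_g-1)q_n+\lambda^\star(g)$ for $n\gg 0$, with $\star=\flat$ if $n$ is odd and $\star=\sharp$ if $n$ is even. A mod $\fm$ congruence between two Iwasawa elements whose $\mu$-invariants both vanish forces equality of their $\lambda$-invariants, and the formula for $\lambda(\Theta_n(f))$ follows immediately from the congruence above.

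I expect the main obstacle to be the weight-raising comparison of modular symbols: bridging $\mathrm{Sym}^{k_f-2}\FF^2$ and $\mathrm{Sym}^{k_g-2}\FF^2$ in a Hecke-equivariant manner. Ribet's multiplicity one sufficed in the Serre-weight-$2$ setting of \cite{PW} because both forms could be realized on modular symbols with trivial coefficients, whereas for $k_g>2$ one is forced to work with a genuine nontrivial local system and to track the interaction of Edixhoven's theorem with the $\theta$-operator. Determining the admissible numerical range encoded by $\delta$, and verifying that the $\cO$-integral normalizations of the modular symbols of $f$ and $g$ match up under this reduction process, should be the most delicate aspects.
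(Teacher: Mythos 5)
Your overall architecture is the paper's: Edixhoven's multiplicity one produces a one-dimensional mod $\fm$ eigenspace, one deduces $\Theta_n(f)\equiv u\,\Theta_n(g)\pmod{\fm}$ with $u\in\cO^\times$ (in fact for all $n\ge0$, not just $n\gg0$ -- no stabilization of Euler factors is needed, since the $j=0$ Mazur--Tate element is built from the $Y^{k-2}$-coefficients of the modular symbol and hence factors through the reduction map), and then Theorem~\ref{thmA} applied to $g$ together with $\mu^\sharp(g)=\mu^\flat(g)=0$ finishes the argument. That last step is exactly Corollary~\ref{cor:ThmB} combined with Theorem~\ref{thm:FL} in the paper.

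However, the mechanism you propose for the crucial weight-bridging step would fail as stated. You suggest pushing the weight-$k_f$ symbol of $f$ into weight $k_g$ by iterating a modular-symbol analogue of the $\theta$-operator. But the $\theta$-operator on modular symbols is multiplication by $X^pY-XY^p$, which \emph{raises} the weight by $p+1$; since $\overline\rho_f\cong\overline\rho_g$ only forces $k_f\equiv k_g\pmod{p-1}$ (e.g.\ $k_g=4$, $k_f=8$, $p=5$ in Table~\ref{table3}), the two weights are in general not linked by any chain of $\theta$'s, and there is no injective "reduction in stages" to keep in range. What the paper does instead is map \emph{both} symbols out to a common weight-independent coefficient system: the evaluation-at-$(0,1)$ maps $\Phi_{k_f},\Phi_{k_g}$ land in $H^1_c(\Gamma_1(Np),\FF(a^{k_f-2}))=H^1_c(\Gamma_1(Np),\FF(a^{k_g-2}))$, where Edixhoven's theorem (transferred to parabolic cohomology of $X_1(Np)$ with \emph{trivial-rank-one} coefficients) gives one-dimensionality of each $\pm$ eigenspace. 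The $\theta$-operator enters only through the Ash--Stevens identification $\ker\Phi_{k_f}=\im\theta_{k_f}$: condition (iii), with the explicit $\delta$ coming from the combinatorial computation of $\sL^\irr_\theta(k_f)$ in Proposition~\ref{prop:theta-image}, guarantees that the inertial type $I(k_g-1)$ of $\overline\rho_f$ does not occur for eigenforms in the image of $\theta_{k_f}$, hence $\Phi_{k_f}(\overline\vp_f^\pm)\neq0$; injectivity of $\Phi_{k_g}$ for $2<k_g<p+1$ gives $\Phi_{k_g}(\overline\vp_g^\pm)\neq0$, and two nonzero vectors in a one-dimensional $\FF$-space differ by a unit. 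So your reading of (iii) as "keeping the iterated reduction Hecke-equivariant and injective" misplaces where the hypothesis acts: it is a nonvanishing criterion for the image of $\overline\vp_f$ under a single weight-lowering map, not a range condition on an iterated operator. With that correction the rest of your argument goes through as in the paper.
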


\begin{lthm}[Theorem~\ref{thm:small-slope}]\label{thmC}  Let $f\in S_{k_f}(\Gamma,\overline{\Q_p})$ and $g\in S_{k_g}(\Gamma,\overline{\Q_p})$ be $p$-non-ordinary cuspidal eigen-newforms. Suppose that 
\begin{enumerate}
\item[(i)]  $2<k_g<p+1$ and $\mu^\sharp(g)=\mu^\flat(g)=0$,
\item[(ii)] $\overline\rho_f\cong \overline\rho_g$,
\item[(iii)]$\ord_p(a_p(f))<k_g-2.$
 \end{enumerate}
 Then for $n\gg0$, we have 
 $$
 \lambda\big(\Theta_n(f)\big)=(k_g-1)q_n+\lambda^\star(g),
 $$
 where $\star=\flat$ if $n$ is odd and $\star=\sharp$ if $n$ is even.
 \end{lthm}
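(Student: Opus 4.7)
The strategy parallels the proof of Theorem~\ref{thmB}: the plan is to establish a congruence
\[
\Theta_n(f) \equiv u\,\Theta_n(g) \pmod{\fm\,\cO[G_n]}
\]
for some $u \in \cO^\times$ and every $n$, and then read off the Iwasawa invariants of $\Theta_n(f)$ from those of $\Theta_n(g)$. The only change from Theorem~\ref{thmB} is the mechanism that produces the congruence: hypothesis~(iii) of Theorem~\ref{thmC} supplies a small-slope bound on $\ord_p(a_p(f))$ in place of the medium-weight bound on $k_f$.

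First I would apply Theorem~\ref{thmA} to $g$. Because $2 < k_g < p+1$, we have $p > k_g - 1$, so $g$ satisfies the Fontaine--Laffaille hypothesis; combined with $\mu^\sharp(g) = \mu^\flat(g) = 0$, Theorem~\ref{thmA} yields, for $n\gg 0$,
\[
\mu(\Theta_n(g)) = 0 \qquad\text{and}\qquad \lambda(\Theta_n(g)) = (k_g - 1)q_n + \lambda^\star(g),
\]
with $\star$ determined by the parity of $n$ as in the statement. The displayed Mazur--Tate congruence would then be produced from a mod~$\fm$ comparison of plus/minus modular symbols attached to $f$ and $g$. The isomorphism $\overline\rho_f \cong \overline\rho_g$ aligns the Hecke eigensystems of $f$ and $g$ modulo~$p$, and Edixhoven's form of mod~$p$ multiplicity one \cite{edixhoven92} asserts that this eigensystem occurs with multiplicity one in the space of mod~$p$ modular symbols of Serre weight $k_g$. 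The small-slope hypothesis $\ord_p(a_p(f)) < k_g - 2$ is precisely what is needed to ensure that the integral plus/minus modular symbol of $f$ reduces to a nonzero vector in this weight-$k_g$ eigenspace; this plays the same role here as the bound $\ord_p(a_p) < p - 1$ does at Serre weight $2$ in \cite[Theorem~2]{PW}. Since Mazur--Tate elements are built by evaluating modular symbols on cyclotomic cycles and averaging over $G_n$, the mod~$\fm$ equality of symbols propagates directly to the desired congruence of $\Theta_n(f)$ and $\Theta_n(g)$.

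Combining the two ingredients, $\Theta_n(f)$ has the same nonzero reduction in $\FF[G_n]$ as $u\,\Theta_n(g)$ for $n \gg 0$, so $\mu(\Theta_n(f)) = 0$ and $\lambda(\Theta_n(f)) = \lambda(\Theta_n(g)) = (k_g-1)q_n + \lambda^\star(g)$, as desired. I expect the main obstacle to be the modular symbol comparison at the sharp threshold $\ord_p(a_p(f)) < k_g - 2$: one must track the $p$-adic valuations arising from the $U_p$-action on the integral modular symbols of $f$ carefully enough to verify that the small-slope bound guarantees a nonzero image in the weight-$k_g$ mod~$p$ eigenspace, and thereby make Edixhoven's multiplicity one applicable without imposing any auxiliary bound on $k_f$.
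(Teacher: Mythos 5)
Your high-level architecture matches the paper's: apply Theorem~\ref{thm:FL} to $g$, use Edixhoven-style mod $p$ multiplicity one to compare the reductions of the modular symbols of $f$ and $g$ inside $H^1_c(\Gamma_1(Np),\FF(a^{k_g-2}))$, and transfer the $\lambda$-invariants. But there are two problems. First, the congruence you assert, $\Theta_n(f)\equiv u\,\Theta_n(g)$ with $u\in\cO^\times$, is not what holds: the correct statement (Theorem~\ref{thm:smallslope}) is $\varpi^{-\mu_{\text{min}}^\pm(f)}\Phi_{k_f}(\overline\vp_f^\pm)=\Phi_{k_g}(\overline\vp_g^\pm)$, i.e.\ one must first strip off a power of $\varpi$ from the symbol of $f$. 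The quantity $\mu_{\text{min}}^\pm(f)$ can be strictly positive under the hypotheses of Theorem~\ref{thmC} --- the paper's example with \texttt{27.16.a.b} at the slope-$1/2$ prime has $\mu(\Theta_n(f))=2$ --- so your conclusion $\mu(\Theta_n(f))=0$ is false in general. The theorem only claims the $\lambda$-formula, which survives because multiplying by a power of $\varpi$ does not change $\lambda$, but as written your congruence is wrong.

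Second, and more seriously, the step you flag as ``the main obstacle'' is exactly the content of the proof, and you have not supplied it. The paper's mechanism is: let $r$ (resp.\ $t\le r$) be maximal with $\varphi_f$ valued in $\Fil^r(V_{k_f-2})$ (resp.\ $\Fil^{r,t}$); Lemma~\ref{lem:filtration}(4) gives $r\le\ord_p(a_p(f))$, so hypothesis (iii) gives $t\le r<k_g-2$. Projecting $\varphi_f$ to the graded piece $\Fil^{r,t}/\Fil^{r,t+1}\cong(\Oo/p\Oo(a^{k_g-2-2r+2t}))(r-t)$ produces a nonzero symbol, and Ash--Stevens yields a weight-two form $h$ with $\overline\rho_f\cong\overline\rho_h\otimes\omega^{r-t}$; comparing $\overline\rho_f|_{I_p}\cong I(k_g-1)$ with $I(s(k_g-2r+2t)+(p+1)(r-t))$ and running the congruences mod $p^2-1$ forces $r=t$. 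Only then does the reduction land in the \emph{untwisted} space $H^1_c(\Gamma_1(Np),\FF(a^{k_g-2}))$, identified with $\varpi^{-\mu_{\text{min}}^\pm(f)}\Phi_{k_f}(\overline\varphi_f^\pm)$, where multiplicity one applies. Without the filtration bookkeeping and the $r=t$ argument, the nonvanishing of the image of $\varphi_f$ in the correct eigenspace --- the point where the small-slope hypothesis actually enters --- is unproven, so the proposal has a genuine gap at its central step. (A minor further point: Edixhoven's theorem is about $J_\QQ(\overline\QQ)[\fm]$ being two-dimensional, translated via Poincar\'e duality into one-dimensionality of $H^1_c(\Gamma_1(Np),\cO)^\pm_\fm$; it is not directly a multiplicity-one statement for ``weight-$k_g$ mod $p$ modular symbols.'')
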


\begin{example}\label{ex:thmBC}\nf 
Let $p=7$ and consider the $p$-non-ordinary newforms $g\in S_5(\Gamma_1(4))$, $f_1\in S_{17}(\Gamma_1(4))$, and $f_2\in S_{23}(\Gamma_1(4))$, with LMFDB labels \href{https://www.lmfdb.org/ModularForm/GL2/Q/holomorphic/4/5/b/a/}{\texttt{4.5.b.a}}, \href{https://www.lmfdb.org/ModularForm/GL2/Q/holomorphic/4/17/b/b/}{\texttt{4.17.b.b}}, and \href{https://www.lmfdb.org/ModularForm/GL2/Q/holomorphic/4/29/b/b/}{\texttt{4.29.b.b}}, respectively. 
Both $f_1$ and $f_2$ are congruent to $g$ at primes over 7, and the table below gives the first few $\lambda$-invariants of the Mazur--Tate elements attached to these modular forms. 

\begin{center}
\begin{tabular}{|c||c|c||c|c|c|c|c|}
\hline
LMFDB & weight  & slope & 0 & 1 & 2 & 3 & 4 \\ 
\hline
\hline
\href{https://www.lmfdb.org/ModularForm/GL2/Q/holomorphic/4/5/b/a/}{\texttt{4.5.b.a}}&  5  & $\infty$& 0 &  3&  24&  171& 1200   \\
\hline
\href{https://www.lmfdb.org/ModularForm/GL2/Q/holomorphic/4/17/b/b/}{\texttt{4.17.b.b}}& 17    & 1/2& 0 &  3&  24&  171& 1200 \\
\hline
\href{https://www.lmfdb.org/ModularForm/GL2/Q/holomorphic/4/29/b/b/}{\texttt{4.29.b.b}} & 29    &1/2& 0 &  3&  24&  171& 1200  \\  
\hline
\end{tabular}
\end{center}
The form $g$ satisfies the Fontaine--Laffaille condition and the $\lambda$-invariants of its Mazur--Tate elements follow the pattern given by Theorem \ref{thmA}. The fact that the $\lambda$-invariants of $f_1$ and $f_2$ agree with those of $g$ can then be explained by Theorems \ref{thmB} and \ref{thmC}, respectively. We note that if condition (iii) in either theorem does not hold, then the $\lambda$-invariants may or may not exhibit the behavior described in the theorems -- see Tables \ref{table3} and \ref{table4} for examples. 
\end{example}

In \cite{CL}, the signed $p$-adic $L$-functions of two congruent modular forms of the same weight under the Fontaine--Laffaille condition have been studied. Results similar to those of Greenberg--Vatsal \cite{greenbergvatsal,EPW} for $p$-ordinary modular forms have been extended to the non-ordinary case. In particular, in \cite{CL} it is shown that if $f$ and $g$ are congruent newforms of the same level with $2<k_f=k_g<p+1$ then $\mu^\star(f)=0$ if and only if $\mu^\star(g)=0$, in which case we have $\lambda^\star(f)=\lambda^\star(g)$. By applying Theorem \ref{thmA} to such $f$ and $g$ and using mod $p$ multiplicity one, we present a different proof of this statement in Corollary \ref{cor:lamflamg}.

We remark that the equality of signed $\lambda$-invariants attached to congruent pairs of modular forms need not hold when the weights of $f$ and $g$ differ. For example, when $f$ has weight $p+1$ with $a_p(f)=0$ and $g$ has weight 2, results of \cite{GL} show that 
one pair of signed invariants agree while the other differ by $p-1$.

\section*{Acknowledgment}

The authors thank Raiza Corpuz, Daniel Delbourgo, Jeffrey Hatley, Chan-Ho Kim, Zichao Lin, Robert Pollack, Naman Pratap, and Tom Weston for interesting discussions on topics related to the article. AL's research is supported by the NSERC Discovery Grants Program RGPIN-2026-04351.

\section{Preliminaries}

\subsection{Modular symbols and Mazur--Tate elements}  Let $R$ be a commutative ring and $k\ge2$ an integer. Let $V_{k-2}(R)$ denote the space of degree $k-2$ homogeneous polynomials over $R$ in the variables $X$ and $Y$.  We define a right action of $\GL_2(R)$ on $V_{k-2}(R)$ by 
$$
P(X,Y)|\gamma=P(dX-cY,-bX+aY)
$$
for $\gamma= \begin{psmallmatrix} a &b\\ c&d\end{psmallmatrix}\in \GL_2(R)$. 

Let $\Gamma$ be a congruence subgroup. Then $\Gamma$ acts by M\"obius transformations on the group $\Delta^0=\Div^0(\mathbb{P}^1(\Q))$ of degree 0 divisors of the rational projective line, and we  define a right action of $\Gamma$ on $\xi\in \Hom(\Delta^0, V_{k-2}(R))$ by setting 
$$
(\xi|\gamma)(D)=\xi(\gamma D)|\gamma 
$$
for $\gamma\in \Gamma$ and $D\in \Delta^0$. We now define the space of $R$-valued \textbf{\textit{modular symbols}} of degree $k-2$ (with respect to $\Gamma$) by 
$$
\Hom_\Gamma(\Delta^0, V_{k-2}(R))=\{\xi:\Delta^0\rightarrow V_{k-2}(R)\mid \xi|\gamma=\xi \,\,\text{for all $\gamma\in \Gamma$}\}. 
$$
By \cite[Proposition 4.2]{ashstevens}, there is a canonical Hecke-equivariant isomorphism 
$$
\Hom_\Gamma(\Delta^0, V_{k-2}(R))\cong H^1_c(\Gamma,V_{k-2}(R)),
$$
and we henceforth identify the two spaces.

 Let $\mathcal{G}_n = \Gal(\QQ(\mu_{p^n})/\QQ)$. There is an isomorphism
$\mathcal{G}_n \to \left(\ZZ/p^n\ZZ\right)^\times$,  given by $\sigma_a \mapsto a$, where $\sigma_a$ is the automorphism $\zeta \mapsto \zeta^a$, for any $\zeta \in \mu_{p^n}$.

\begin{defn}\label{def:MT}
   Let $k\ge2$ be an integer. For a modular symbol $\varphi \in H_c^1(\Gamma_1(N), V_{k-2}(R))$, we define the associated \textbf{Mazur--Tate elements} $\vartheta_n(\varphi)$ of level $n \geq 1$ by 
    $$\vartheta_n(\varphi) = \sum_{a \in (\ZZ/p^n\ZZ)^\times} \varphi\,\bigg|\, \begin{pmatrix}
    1 & -a\\ 0 & p^n
\end{pmatrix} (\{\infty\} - \{0\})\cdot \sigma_a \in R[X,Y][\mathcal{G}_n].$$
    We write
    \[
    \vartheta_n(\varphi)=\sum_{j=0}^{k-2}\binom{k-2}{j}X^jY^{k-2-j}\vartheta_{n,j}(\varphi),
    \]
    where $\binom{k-2}{j}\vartheta_{n,j}(\varphi)\in R[\cG_n]$.
\end{defn}

If $R$ is a discrete valuation ring with field of fractions $K$, we define the norm of a modular symbol $\xi \in H^1_c(\Gamma,V_{k-2}(K))$ by 
$$
||\xi|| = \max_{D\in \Delta^0}||\xi(D)||,
$$
where $||\xi(D)||$ denotes the maximum of the absolute values of the coefficients of $\xi(D)$. Following \cite{PW}, we say that a modular symbol $\xi$ is \emph{cohomological} if 
$$
||\xi||=1. 
$$
Note that there is a unique (up to units in $R$) cohomological symbol attached to any element of $ H^1_c(\Gamma,V_{k-2}(R))$, obtained by scaling by an appropriate power of a uniformizer. 

\subsection{Mazur--Tate elements of modular forms}\label{section:MT}

Throughout this section, $f=\sum a_nq^n$ is a fixed Hecke eigen-cuspform of weight $k$, level $\Gamma_1(N)$, and nebentype $\epsilon$ where $p\nmid N$. The complex modular symbol $\xi_f\in H^1_c(\Gamma_1(N),V_{k-2}(\C))$ attached to $f$ is defined by
\begin{align*}
\xi_f(\{r\}-\{s\})=2\pi i \int_s^r f(z)(zX+Y)^{k-2}dz.
\end{align*}
 Let $K/\Q_p$ be a finite extension that contains the image of $a_n$ (under our fixed embedding) for all $n$.  The ring of integers of $K$ is denoted by $\cO$. Writing $\xi_f=\xi_f^++\xi_f^-$, where  $\xi_f^\pm$ lie in the $(\pm1)$-eigenspace for $\begin{psmallmatrix} -1 &0\\ 0&1\end{psmallmatrix}$, it follows from \cite[Proposition 5.11]{PasolPopa} 
 that there exist periods $\Omega^\pm_f\in \C$ such that the modular symbols $\xi_f^\pm/\Omega_f^\pm$ take values in $V_{k-2}(K)$, with respect to our fixed embedding $\overline \Q\hookrightarrow \overline{\Q_p}$. Let $\varphi_f^\pm\in H_c^1(\Gamma_1(N),V_{k-2}(\cO))$ denote the cohomological symbols associated to $\xi_f^\pm/\Omega_f^\pm$. Thus, $\varphi_f^+$ and $\varphi_f^-$ are well-defined up to units in $\Oo$ and depend on the choice of embedding $\overline \Q\hookrightarrow \overline{\Q_p}$.

We can decompose $\mathcal{G}_{n+1}$ as
$$
\mathcal{G}_{n+1} \cong \Delta \times G_n,
$$
where $\Delta \cong (\ZZ/p\ZZ)^\times$ and $G_n$ is a cyclic group of order $p^n$. Letting $\omega$ be the Teichm\"{u}ller character on $\Delta$, we obtain an induced map $\omega^i: \Oo[\mathcal{G}_{n+1}] \to \Oo[G_n]$ for each $0 \leq i \leq p-2$.  Let $\alpha$ and $\beta$ be the two roots of the Hecke polynomial $X^2-a_pX+\epsilon(p)p^{k-1}$ in $\overline{K}$. Let $K'=K(\alpha,\beta)$ and write $\cO'$ for the ring of integers of $K'$.

\begin{defn}\label{def:MT2}
For integers $0\le i\le p-2$, $0\le j\le k-2$, and $n\ge0$, we define
$\Theta_{n,j}(f,\omega^i)\in K[G_n]$ as the image of $\vartheta_{n+1,j}(\varphi_f^{\sgn(-1)^i})\in K[\cG_{n+1}]$ under $\omega^{i-j}$. When $n\ge 1$, we define for $\Upsilon\in\{\alpha,\beta\}$ 
\[
\Theta_{n,j}(f,\Up,\omega^i)=
\frac{1}{\Up^{n+1}}\cdot\Theta_{n,j}(f,\omega^i)-\frac{\epsilon(p)p^{k-2}}{\Up^{n+2}}\cdot\nu^n_{n-1}\Theta_{n-1,j}(f,\omega^i)\in K'[G_{n}], 
\]
where $\nu^n_{n-1}:\cO[G_{n-1}]\rightarrow\cO[G_{n}]$ is the norm map that sends $\sigma\in G_{n-1}$ to the sum of the pre-images of $\sigma$ in $G_n$ under the projection map $\pi^n_{n-1} :G_n\rightarrow G_{n-1}$. 
\end{defn}

\begin{remark}\nf  When $j=0$  we simply write $\Theta_n(f,\omega^i)$ to denote $\Theta_{n,0}(f,\omega^i)$. When $i=j=0$, we write $\Theta_n(f)$ for $\Theta_{n,0}(f,\omega^0)$. 
\end{remark}

Let $\gamma$ be a topological generator of $G_\infty=\displaystyle\lim_{\leftarrow} G_n$. There is a natural identification
\begin{equation}\label{eq:identification}
K[G_n] = K[[ \gamma-1 ]] / (\gamma^{p^n}-1) = K[[X]]/(\omega_n),
\end{equation}
where $\omega_n=(1+X)^{p^n}-1$.
Note that $G_n$ is a cyclic group of order $p^n$ generated by the image of $\gamma$ in $G_n$.
Let $a\in(\ZZ/p^n\ZZ)^\times$ and write $\overline\sigma_a$ for the image of $\sigma_a$ under the natural projection $\Gg_{n+1}\rightarrow G_n$. Under the above identification, the element $\overline\sigma_a\in K[G_n]$ can be regarded as the polynomial $(1+X)^{m(a)}$, where $m(a)$ is the unique integer such that $0\le m(a)\le p^n-1$ and $\overline\sigma_a=\gamma^{m(a)}\mod \gamma^{p^n}$.

We define $Q_{n,j}(f,\omega^i)\in K[X]$  as the image of the theta element $\Theta_{n,j}(f,\omega^{i})$ under this identification. We define $Q_{n,j}(f,\Upsilon,\omega^i)\in K'[X]$ using $\Theta_{n,j}(f,\Up,\omega^i)$ similarly. Let $u$ denote the image of $\gamma$ under the $p$-adic cyclotomic character on $\cG_\infty$.

\begin{theorem}
    For $\Upsilon\in\{\alpha,\beta\}$, there exists a unique tempered distribution $\mu_{f,\Upsilon}$ on $\cG_\infty$ such that its Amice transform is a power series $L_p(f,\Upsilon)\in K'[\Delta][[X]]$ whose $\omega^i$-isotypic component $L_p(f,\Upsilon,\omega^i)$ satisfies
    \[
    L_p(f,\Upsilon,\omega^i)(u^j(1+X)-1)\equiv Q_{n,j}(f,\Upsilon,\omega^i)\mod \omega_n
    \]
    for $0\le j\le k-2$ and $n\ge0$.
\end{theorem}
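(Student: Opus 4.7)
The plan is to construct $L_p(f,\Up,\omega^i)$ as the Amice transform of a measure on $G_\infty$ whose moments recover the stabilized Mazur--Tate elements $\Theta_{n,0}(f,\Up,\omega^i)$, and then to derive the interpolation at $u^j(1+X)-1$ from the link between $\Theta_{n,j}$ and $\Theta_{n,0}$ via Definition~\ref{def:MT}. The key first step is the distribution compatibility
$$\pi^n_{n-1}\Theta_{n,j}(f,\Up,\omega^i) = \Theta_{n-1,j}(f,\Up,\omega^i) \qquad (n\ge 2).$$
Its starting point is the standard three-term Hecke identity
$$\pi^n_{n-1}\Theta_{n,j}(f,\omega^i) = a_p\cdot\Theta_{n-1,j}(f,\omega^i) - \epsilon(p)p^{k-2}\cdot\nu^{n-1}_{n-2}\Theta_{n-2,j}(f,\omega^i),$$
proved by decomposing the matrices $\begin{pmatrix}1&-a\\0&p^n\end{pmatrix}$ over the fibres of $\pi^n_{n-1}$ and invoking $T_p\varphi_f^\pm = a_p\varphi_f^\pm$. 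Substituting this into Definition~\ref{def:MT2}, using $\Up^2 = a_p\Up - \epsilon(p)p^{k-1}$ together with $\pi^n_{n-1}\circ\nu^n_{n-1}=p\cdot\id$, the cross-terms cancel (because $p\cdot p^{k-2}=p^{k-1}$) and the three-term recurrence collapses to the one-term compatibility above.

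Next I would verify the admissibility bound: the coefficients of $Q_{n,0}(f,\Up,\omega^i)\in K'[X]$ (as a polynomial of degree $<p^n$) have $p$-adic valuation at least $-(n+2)\ord_p(\Up)$, which is immediate from the cohomological normalization $\Theta_{n,0}(f,\omega^i)\in\cO[G_n]$ together with the explicit $\Up^{n+1}$ and $\Up^{n+2}$ denominators in the stabilization. With the compatibility and growth bound in hand, the Amice--V\'elu/Vishik theory of $h$-admissible distributions (here $h=\ord_p(\Up)$) produces a unique tempered distribution $\mu_{f,\Up,\omega^i}$ on $G_\infty$ whose level-$n$ projection is $\Theta_{n,0}(f,\Up,\omega^i)$; assembling across the characters $\omega^i$ via $\Gg_\infty=\Delta\times G_\infty$ then yields $L_p(f,\Up)\in K'[\Delta][[X]]$ as the Amice transform.

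Finally, the interpolation at $u^j(1+X)-1$ follows by computing the $j$-th cyclotomic moment of $\mu_{f,\Up,\omega^i}$ modulo $\omega_n$. A direct calculation from Definition~\ref{def:MT} shows that twisting by $\chi_\cyc^j$ turns $\Theta_{n,0}(f,\Up,\omega^i)$ into $\Theta_{n,j}(f,\Up,\omega^i)$: under the expansion $\vartheta_n(\varphi)=\sum_j\binom{k-2}{j}X^jY^{k-2-j}\vartheta_{n,j}(\varphi)$ the extraction of the $j$-th component is precisely the action of $\chi_\cyc^j$, and under the identification \eqref{eq:identification} this translates to the substitution $u^j(1+X)-1$ for $X$. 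The main technical obstacle is the first step: the Hecke recurrence and the stabilization factors must align exactly via the quadratic relation satisfied by $\Up$; once that is established, the remaining steps are formal consequences of Amice transform theory on $\Zp$.
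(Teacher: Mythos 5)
Your proposal is correct and follows the same route as the paper, whose entire proof is a citation to Vishik, Amice--V\'elu, and Mazur--Tate--Teitelbaum: you have simply written out the standard content of those references (the three-term Hecke relation collapsing to distribution compatibility via $\Up^2=a_p\Up-\epsilon(p)p^{k-1}$ and $\pi^n_{n-1}\circ\nu^n_{n-1}=p\cdot\id$, the admissibility bound, and the interpolation via cyclotomic twisting). The computation checks out, so this is a faithful expansion of the argument the authors invoke.
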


\begin{proof}
    This follows from the main results of \cite{visik76,amicevelu75,MTT}.
\end{proof}

\subsection{Iwasawa invariants} We recall the definition of Iwasawa invariants of an element in $K[G_n]$.
\begin{definition}  
Given an integer $n\ge0$ and a nonzero element $F$ of $K[G_n]$, we can write $F=\displaystyle\sum_{i=0}^{p^n-1} c_i X^i$. We define the Iwasawa invariants of $F$ by 
\begin{align*}
\mu(F)&=\min(\ord_\varpi(c_i)), \\
\lambda(F)&=\min(i:\ord_\varpi(c_i)=\mu(F)),
\end{align*}
where $\varpi$ is a uniformizer of $K$.
    By convention, we define $\mu(0)=\lambda(0)=\infty$.
    If $F\in\cO[[X]]\otimes K$, we define $\mu(F)$ and $\lambda(F)$ similarly.
\end{definition}
\begin{lemma}\label{lem:evaluation}
    Let $\zeta_{p^n}$ be a primitive $p^n$-th root of unity. If $F\in K[G_n]$ is a nonzero element such that $\lambda(F)<\phi(p^n)\ord_p(\varpi)$, then
    \[
    \ord_p\left(F(\zeta_{p^n}-1)\right)=\mu(F)\ord_p(\varpi)+\frac{\lambda(F)}{\phi(p^n)}.
    \]
\end{lemma}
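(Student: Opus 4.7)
The plan is to write $F$ explicitly as a polynomial under the identification \eqref{eq:identification}, evaluate at the specific point $\pi = \zeta_{p^n}-1$, and then identify the strictly minimal term in the resulting sum using the non-archimedean triangle inequality.

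More precisely, I would proceed as follows. First, under the identification $K[G_n] = K[X]/(\omega_n)$, take the unique polynomial representative $F(X) = \sum_{i=0}^{p^n-1} c_i X^i$ of degree $<p^n$. Since $\omega_n(\pi) = (1+\pi)^{p^n} - 1 = \zeta_{p^n}^{p^n}-1 = 0$, the value $F(\pi)\in K(\zeta_{p^n})$ is well-defined. Next, recall that $\pi$ is a uniformizer of the totally ramified extension $\Q_p(\zeta_{p^n})/\Q_p$ of degree $\phi(p^n)$, so $\ord_p(\pi) = 1/\phi(p^n)$. Therefore for each $i$,
\[
\ord_p(c_i\pi^i) = \ord_\varpi(c_i)\ord_p(\varpi) + \frac{i}{\phi(p^n)}.
\]

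The crux is then to check that among all terms in $F(\pi) = \sum c_i \pi^i$, exactly one achieves the minimum $p$-adic valuation, namely the $i=\lambda(F)$ term. Setting $\mu = \mu(F)$ and $\lambda = \lambda(F)$, the $i=\lambda$ term contributes the value $\mu\cdot\ord_p(\varpi) + \lambda/\phi(p^n)$. For $i > \lambda$, we have $\ord_\varpi(c_i) \geq \mu$, so the valuation is at least $\mu\cdot\ord_p(\varpi) + i/\phi(p^n)$, which is strictly larger. For $i < \lambda$, the definition of $\lambda$ forces $\ord_\varpi(c_i) \geq \mu+1$, so
\[
\ord_p(c_i\pi^i) \geq (\mu+1)\ord_p(\varpi) + \frac{i}{\phi(p^n)},
\]
and strict dominance over the $i=\lambda$ term is equivalent to $\ord_p(\varpi) > (\lambda - i)/\phi(p^n)$. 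The worst case is $i=0$, giving precisely the hypothesis $\lambda < \phi(p^n)\ord_p(\varpi)$.

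With uniqueness of the minimum established, the strict form of the non-archimedean triangle inequality in $K(\zeta_{p^n})$ immediately yields
\[
\ord_p(F(\pi)) = \mu\cdot\ord_p(\varpi) + \frac{\lambda}{\phi(p^n)}.
\]
There is no real obstacle here; the only subtle point is verifying that no cancellation occurs in the sum, which is exactly what the hypothesis $\lambda < \phi(p^n)\ord_p(\varpi)$ is designed to guarantee. I would be careful to note that the representative of $F$ as a polynomial of degree $<p^n$ is canonical, so the indices $c_i$ entering the definitions of $\mu(F)$ and $\lambda(F)$ are unambiguous.
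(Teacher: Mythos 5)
Your proposal is correct and follows essentially the same route as the paper: both split the terms of $F(\zeta_{p^n}-1)$ into the indices $i>\lambda$, $i=\lambda$, and $i<\lambda$, show the $i=\lambda$ term has strictly smaller valuation than the other two groups (the $i<\lambda$ case being exactly where the hypothesis $\lambda(F)<\phi(p^n)\ord_p(\varpi)$ is used), and conclude by the ultrametric inequality. The only cosmetic difference is that the paper factors out $\varpi^{\mu}$ at the start, while you keep the valuations of the coefficients explicit throughout.
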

\begin{proof}
    Let $F\in K[G_n]$ be a nonzero element with $\mu=\mu(F)$, $\lambda=\lambda(F)<\phi(p^{n})\ord_p(\varpi)$. We can write
    \[
    F=\varpi^\mu\left(\sum_{i>\lambda}d_iX^i+d_\lambda X^\lambda+\varpi\left(\sum_{i<\lambda} d_iX^i\right)\right),\quad\text{where } d_i\in\cO,\ d_\lambda\in\cO^\times.
    \]
    We have
    \[
    \ord_p\left(d_\lambda(\zeta_{p^n}-1)^\lambda\right)=\frac{\lambda}{\phi(p^n)}<\ord_p\left(\sum_{i>\lambda}d_i(\zeta_{p^n}-1)^i\right),
    \]
    and
    \[
    \ord_p\left(d_\lambda(\zeta_{p^n}-1)^\lambda\right)=\frac{\lambda}{\phi(p^n)}<\ord_p(\varpi)\le\ord_p\left(\varpi\left(\sum_{i<\lambda} d_i(\zeta_{p^n}-1)^i\right)\right).
    \]
    Therefore, the lemma follows from the ultrametric triangle inequality.
\end{proof}

\begin{lemma}\label{lem:iwinvlayern} Let $n\geq 0$. Let $F\in \Oo[[X]]\otimes_\Oo K$ and suppose $\lambda(F)<p^n$. Then the Iwasawa invariants of $F$ and $F\Mod \omega_n$ agree. 
\end{lemma}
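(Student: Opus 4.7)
The plan is to reduce to the case $\mu(F)=0$ by pulling out an appropriate power of a uniformizer, and then to exploit the fact that $\omega_n$ is a distinguished polynomial of degree $p^n$ to see that reduction modulo $\omega_n$ preserves both Iwasawa invariants whenever $\lambda(F)<p^n$.

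First I would set $\mu=\mu(F)$ and $\lambda=\lambda(F)$, and put $G:=\varpi^{-\mu}F\in\cO[[X]]$, which has $\mu(G)=0$ and $\lambda(G)=\lambda$. Since $F\Mod{\omega_n}=\varpi^\mu\cdot(G\Mod{\omega_n})$ and the Iwasawa invariants transform predictably under scaling by $\varpi^\mu$, it suffices to show that the $\mu$- and $\lambda$-invariants of $G\Mod{\omega_n}$ are $0$ and $\lambda$ respectively.

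Next, observe that $\omega_n=(1+X)^{p^n}-1$ is monic of degree $p^n$ in $\cO[X]$ with $\omega_n\equiv X^{p^n}\Mod{\varpi}$. By the Weierstrass division theorem in $\cO[[X]]$ we may write uniquely $G=q\cdot\omega_n+r$ with $q\in\cO[[X]]$ and $r\in\cO[X]$ of degree less than $p^n$; under the identification \eqref{eq:identification} this $r$ is precisely the polynomial representative of $G\Mod{\omega_n}$ used to compute its Iwasawa invariants in $K[G_n]$. Reducing the division identity modulo $\varpi$ yields $\overline G=\overline q\cdot X^{p^n}+\overline r$ in $\FF[[X]]$. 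Because $\mu(G)=0$ and $\lambda(G)=\lambda<p^n$, the series $\overline G$ is nonzero with its lowest-degree term in degree $\lambda$, and since $\overline q\cdot X^{p^n}$ contributes only in degrees $\geq p^n>\lambda$, this lowest-degree term is inherited by $\overline r$. Hence $\mu(r)=0$ and $\lambda(r)=\lambda$, and multiplying back by $\varpi^\mu$ gives the lemma.

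The argument is essentially routine bookkeeping; the only point meriting care is the verification that the identification $K[G_n]=K[[X]]/(\omega_n)$ genuinely passes the Iwasawa-invariant computation to the Weierstrass remainder. This is automatic because $\omega_n$ is monic of degree $p^n$, so every element of $\cO[[X]]$ admits a unique polynomial representative of degree less than $p^n$ with coefficients in $\cO$, and the coefficient-wise $\varpi$-valuations used to define $\mu$ and $\lambda$ are read from this canonical representative.
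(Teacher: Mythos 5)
Your proof is correct: the reduction to $\mu(F)=0$, followed by Weierstrass division by the distinguished polynomial $\omega_n\equiv X^{p^n}\Mod{\varpi}$ and the observation that the lowest-degree unit coefficient of $\overline G$ sits in degree $\lambda<p^n$ and hence survives in the remainder, is a complete and standard argument. The paper itself gives no details, simply deferring to \cite[Lemma~2.2]{GCMB}, and your argument is essentially the one that reference carries out, so there is nothing to flag.
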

\begin{proof} This follows from the same argument as  \cite[Lemma 2.2]{GCMB}. 
\end{proof}

\begin{lemma}\label{lem:modpomega}Let $n\geq 0$.  Let $F\in\Oo[[X]]\otimes K$, $G\in \Oo[X]$, and suppose $F\equiv G \Mod (\varpi,\omega_n)$. If $\deg(G)<p^n$, $\lambda(F)<p^n$, and $\mu(F)=0$, then the Iwasawa invariants of $F$ and $G$ agree. 
\end{lemma}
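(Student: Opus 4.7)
The plan is to reduce the congruence $F\equiv G\pmod{(\varpi,\omega_n)}$ modulo the uniformizer $\varpi$ and exploit the fact that $\omega_n$ becomes a pure power of $X$ in characteristic $p$. Since $(1+X)^{p^n}\equiv 1+X^{p^n}\pmod p$, we have the key identity
\[
\omega_n=(1+X)^{p^n}-1\equiv X^{p^n}\pmod\varpi
\]
in $\FF[[X]]$. Thus the hypothesis $F\equiv G\pmod{(\varpi,\omega_n)}$ translates to the cleaner statement $F\equiv G\pmod{X^{p^n}}$ in $\FF[[X]]$; equivalently, if we write $F=\sum a_i X^i$ and $G=\sum b_iX^i$ with $a_i\in K$ and $b_i\in\Oo$, then $a_i\equiv b_i\pmod\varpi$ for every $0\le i<p^n$.

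The assumption $\mu(F)=0$ places $F$ inside $\Oo[[X]]$, so every $a_i$ lies in $\Oo$. Moreover, $\lambda(F)<p^n$ by hypothesis, so by the definition of the Iwasawa invariants we have $a_i\in\varpi\Oo$ for each $i<\lambda(F)$ and $a_{\lambda(F)}\in\Oo^\times$. Transporting this information across the coefficient-wise congruence mod $\varpi$ (which is valid in the range $i<p^n$ that contains both $\lambda(F)$ and all smaller indices), we conclude $b_i\in\varpi\Oo$ for $i<\lambda(F)$ and $b_{\lambda(F)}\in\Oo^\times$. Hence $\mu(G)=0$ and $\lambda(G)=\lambda(F)$.

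There is no real obstacle here: once one observes that reduction mod $\varpi$ turns $\omega_n$ into $X^{p^n}$, the congruence becomes a coefficient-wise statement in the initial segment $0\le i<p^n$, which is exactly the range where $\lambda(F)$ lives by hypothesis. The roles of $\mu(F)=0$, $\lambda(F)<p^n$, and $\deg G<p^n$ are merely to ensure that the relevant coefficients are actually recorded in this initial segment and not washed away by the reduction. (One could alternatively first invoke Lemma~\ref{lem:iwinvlayern} to replace $F$ by its Weierstrass remainder modulo $\omega_n$, a polynomial of degree less than $p^n$ with the same Iwasawa invariants, and then compare with $G$ mod $\varpi$; this gives the same conclusion via the same mechanism.)
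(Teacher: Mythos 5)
Your proof is correct, but it runs along a slightly different track from the paper's. The paper argues in $\Oo[[X]]/\omega_n$: it writes $G\equiv \varpi H+F\Mod{\omega_n}$ for some $H\in\Oo[[X]]$, observes that $\mu(F)=0$ forces $\lambda(\varpi H+F)=\lambda(F)<p^n$, and then quotes Lemma~\ref{lem:iwinvlayern} to say that reduction modulo $\omega_n$ (which turns $\varpi H+F$ into $G$, using $\deg G<p^n$) preserves the Iwasawa invariants. You instead reduce modulo $\varpi$ first, use $(1+X)^{p^n}-1\equiv X^{p^n}\Mod{\varpi}$ to convert the congruence into the coefficient-wise statement $a_i\equiv b_i\Mod{\varpi}$ for $0\le i<p^n$, and read off $\mu(G)=0$ and $\lambda(G)=\lambda(F)$ directly from the definitions. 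Your route is more elementary and self-contained (it never invokes Lemma~\ref{lem:iwinvlayern}, and as you implicitly notice it does not even need $\deg G<p^n$, since $G\in\Oo[X]$ already guarantees nonnegative valuations in the tail); the paper's route is shorter on the page because it outsources the work to the preceding lemma. All the small points you need are in order: $\mu(F)=0$ does place $F$ in $\Oo[[X]]$ so that reduction mod $\varpi$ makes sense, and $\lambda(F)<p^n$ ensures the pivotal unit coefficient is seen by the truncation at $X^{p^n}$.
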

\begin{proof} We have $G\equiv \varpi H+F\Mod \omega_n$ for some $H\in \Oo[[X]]$.  As $\mu(F)=0$ we have $\lambda(\varpi H+F)=\lambda(F)<p^n$, and the statement now follows from the previous lemma. 
\end{proof}

\subsection{Important maps on modular symbols}
Let $\FF$ denote the residue field of $K$ and let $\overline V_{k-2}=V_{k-2}(\F)$. Define the semigroup 
$$
S_0(p) = \big\{\begin{psmallmatrix} a &b\\ c&d\end{psmallmatrix}\in M_{2}(\Z)\mid \det\begin{psmallmatrix} a &b\\ c&d\end{psmallmatrix} \neq 0, \,\, p\mid c,  \,\,p\nmid a\big\}. 
$$
Let $\FF(a^j)$ denote the $S_0(p)$-module $\FF$ where $\gamma= \begin{psmallmatrix} a &b\\ c&d\end{psmallmatrix}\in S_0(p)$ acts as multiplication by $a^j$. 

\begin{lemma} Let $k>2$ be an integer. The map \begin{align*}
    \tilde \Phi_k:\overline{V}_{k-2}&\rightarrow\FF(a^{k-2}),\\
    P(X,Y)&\mapsto P(0,1)
\end{align*}
is $S_0(p)$-equivariant and therefore induces a Hecke-equivariant map
\[
\Phi_k: H^1_c(\Gamma_1(N),\overline{V}_{k-2})\rightarrow H^1_c(\Gamma_1(Np),\FF(a^{k-2})).
\]
\end{lemma}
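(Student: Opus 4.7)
The core of the lemma is the $S_0(p)$-equivariance, which reduces to a one-line computation after reducing modulo $p$; the Hecke-equivariance on cohomology then follows from the formalism of Hecke operators for $S_0(p)$-modules.

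\textbf{Step 1 (direct computation).} Fix $\gamma = \begin{psmallmatrix} a & b \\ c & d \end{psmallmatrix} \in S_0(p)$ and $P \in \overline{V}_{k-2}$. By the definition of the right action,
\[
\tilde\Phi_k(P|\gamma) = (P|\gamma)(0,1) = P(d\cdot 0 - c\cdot 1,\, -b\cdot 0 + a\cdot 1) = P(-c,a).
\]
Because $p \mid c$, we have $P(-c,a) \equiv P(0,a) \pmod{p}$ in $\FF$, and since $P$ is homogeneous of degree $k-2$,
\[
P(0,a) = a^{k-2}\, P(0,1) = a^{k-2}\, \tilde\Phi_k(P).
\]
By the definition of $\FF(a^{k-2})$ as an $S_0(p)$-module, the right-hand side is precisely $\tilde\Phi_k(P)|\gamma$, so $\tilde\Phi_k$ is $S_0(p)$-equivariant. (Note that the condition $p \nmid a$ in the definition of $S_0(p)$ ensures $a^{k-2}$ is a unit, so the module $\FF(a^{k-2})$ makes sense as a character of $S_0(p)$.)

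\textbf{Step 2 (inducing a map on cohomology).} Any $\gamma \in \Gamma_1(Np)$ satisfies $a \equiv 1 \pmod{p}$ and $c \equiv 0 \pmod{p}$, hence $\Gamma_1(Np) \subset S_0(p) \cap \Gamma_1(N)$. Consequently the restriction map
\[
H^1_c(\Gamma_1(N),\overline{V}_{k-2}) \longrightarrow H^1_c(\Gamma_1(Np),\overline{V}_{k-2})
\]
followed by the coefficient-change map induced by the $\Gamma_1(Np)$-equivariant homomorphism $\tilde\Phi_k$ produces the desired map
\[
\Phi_k:H^1_c(\Gamma_1(N),\overline{V}_{k-2}) \longrightarrow H^1_c(\Gamma_1(Np),\FF(a^{k-2})).
\]

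\textbf{Step 3 (Hecke-equivariance).} The Hecke operators on both sides are defined through double-coset decompositions whose coset representatives all lie in $S_0(p)$: for primes $\ell \nmid Np$ this is standard, and at $p$ the usual upper-triangular representatives $\begin{psmallmatrix} 1 & j \\ 0 & p \end{psmallmatrix}$ visibly lie in $S_0(p)$. Since $\tilde\Phi_k$ is $S_0(p)$-equivariant by Step 1, it intertwines the two $S_0(p)$-actions, and therefore commutes with every Hecke operator defined in this way. This gives the Hecke-equivariance of $\Phi_k$.

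There is no real obstacle; the only content is the mod-$p$ vanishing of $c$ combined with homogeneity of $P$, and all remaining assertions are standard consequences of the formalism of $S_0(p)$-modules.
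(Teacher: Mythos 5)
Your Step 1 is correct and matches the paper's computation (you are in fact slightly more careful: the paper writes $\tilde\Phi_k(P|\gamma)=P(0,a)$ directly, whereas you make explicit the intermediate value $P(-c,a)$ and the reduction $P(-c,a)=P(0,a)$ in $\FF$ using $p\mid c$). Step 2 is also fine and is left implicit in the paper.

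Step 3, however, contains a genuine gap at the prime $p$. The Hecke-equivariance asserted in the lemma is that $\Phi_k$ intertwines $T_p$ on the source (level $\Gamma_1(N)$, $p\nmid N$) with $U_p$ on the target (level $\Gamma_1(Np)$); these are \emph{different} operators with different numbers of coset representatives. On the source, $T_p$ is computed with the $p$ representatives $\begin{psmallmatrix} 1 & j\\ 0 & p\end{psmallmatrix}$ \emph{plus one extra representative} of the form $\begin{psmallmatrix} a & b\\ N & p\end{psmallmatrix}\begin{psmallmatrix} p & 0\\ 0 & 1\end{psmallmatrix}$ with $ap-bN=1$ (this is exactly the decomposition the paper uses again in the proof of its Lemma on filtrations, part (4)). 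That extra matrix equals $\begin{psmallmatrix} ap & b\\ Np & p\end{psmallmatrix}$, whose upper-left entry is divisible by $p$, so it does \emph{not} lie in $S_0(p)$; your claim that all representatives at $p$ lie in $S_0(p)$ is false for the source, and $S_0(p)$-equivariance alone cannot account for this term. What is actually needed — and what the paper's proof supplies — is the additional computation
\[
\tilde\Phi_k\left(P\Big|\begin{psmallmatrix} a & b\\ N & p\end{psmallmatrix}\begin{psmallmatrix} p & 0\\ 0 & 1\end{psmallmatrix}\right)=0,
\]
which holds because evaluating at $(X,Y)=(0,1)$ produces $P(-Np,ap)$ with both arguments divisible by $p$. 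Only after this extra term is seen to die does the $(p+1)$-term sum defining $T_p$ collapse to the $p$-term sum defining $U_p$ on the target. Your proof as written omits this, which is the only non-formal point in the Hecke-equivariance.
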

\begin{proof}
 If $\gamma= \begin{psmallmatrix} a &b\\ c&d\end{psmallmatrix}\in S_0(p)$ then 
$$
\tilde \Phi_k(P|\gamma)=
P(0,a)=a^{k-2}P(0,1)=\gamma\cdot\tilde \Phi_k(P). 
$$
Furthermore, since 
$$
\tilde \Phi_k\left(P|\begin{psmallmatrix} a &b\\ N&p\end{psmallmatrix}\begin{psmallmatrix} p &0\\ 0&1\end{psmallmatrix}\right)=0,
$$
where $ap-bN=1$, we see that the induced map on cohomology is Hecke-equivariant at $p$ (acting as $T_p$ on the source and $U_p$ on the target).  
\end{proof}

For a $S_0(p)$-module $V$, we write $V(j)$ for the $S_0(p)$-module $V$ where the action of $\gamma\in S_0(p)$ is multiplied by $\det(\gamma)^j$. 

\begin{lemma}
Let $k\ge p+3$ be an integer. The map \begin{align*}
   \tilde \theta_k: \overline{V}_{k-p-3}(1)&\rightarrow \overline{V}_{k-2},\\
    P(X,Y)&\mapsto (X^pY-XY^p)P(X,Y)
\end{align*}
is $S_0(p)$-equivariant and therefore induces a Hecke-equivariant map
\[
\theta_k: H^1_c(\Gamma_1(N),\overline{V}_{k-p-3})(1)\rightarrow H^1_c(\Gamma_1(N),\overline{V}_{k-2}).
\]
\end{lemma}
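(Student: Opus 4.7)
The plan is to check directly that multiplication by the fixed polynomial $E:=X^pY-XY^p\in\FF[X,Y]$ is $S_0(p)$-equivariant as a map $\overline{V}_{k-p-3}(1)\to\overline{V}_{k-2}$; once this is done, Hecke-equivariance of the induced map on cohomology follows by exactly the same functoriality argument used for $\Phi_k$ in the previous lemma.

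The core computation is the action of $\gamma=\begin{psmallmatrix}a&b\\c&d\end{psmallmatrix}\in S_0(p)$ on $E$, done in $\FF[X,Y]$. I would exploit three features of characteristic $p$: (i) Frobenius is a ring endomorphism, so $p$-th powers are additive; (ii) $p\mid c$ forces $c=0$ in $\FF$, so $dX-cY=dX$ in $\FF[X,Y]$; (iii) Fermat gives $a^p=a$ and $d^p=d$ in $\FF$. These together yield $(dX-cY)^p=dX^p$ and $(-bX+aY)^p=-bX^p+aY^p$. Substituting into $E|\gamma=(dX-cY)^p(-bX+aY)-(dX-cY)(-bX+aY)^p$ and collecting terms, the $bdX^{p+1}$ contributions cancel and one finds
\[
E|\gamma\;=\;ad(X^pY-XY^p)\;=\;\det(\gamma)\cdot E,
\]
since $\det(\gamma)=ad-bc\equiv ad\pmod p$.

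Given this identity, equivariance is formal. On $\overline{V}_{k-p-3}(1)$ the action is $\gamma\cdot P=\det(\gamma)(P|\gamma)$, while on $\overline{V}_{k-2}$ it is the untwisted $|\gamma$; by multiplicativity $(EP)|\gamma=(E|\gamma)(P|\gamma)$, so both $\tilde\theta_k(\gamma\cdot P)$ and $\tilde\theta_k(P)|\gamma$ equal $\det(\gamma)\,E\,(P|\gamma)$. For the Hecke-equivariance of the induced map on cohomology, the argument parallels the previous lemma: the Hecke operators at primes $\ell\ne p$ are given by coset representatives with $p\nmid\det$, which lie in $S_0(p)$; for $T_p$, the representatives $\begin{psmallmatrix}1&j\\0&p\end{psmallmatrix}$ lie in $S_0(p)$, and the remaining representative $\begin{psmallmatrix}p&0\\0&1\end{psmallmatrix}$ acts as $0$ on both sides (on the source by the $\det$-twist, since $p=0$ in $\FF$, and on the image of $\tilde\theta_k$ because applying this substitution to $E\cdot P$ produces the factor $p X^p Y - p^p X Y^p$, which vanishes in $\FF[X,Y]$).

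No single step is a real obstacle; the only thing to watch is the bookkeeping of the $\det$-twist, which is present in the source precisely so that it can absorb the $\det(\gamma)$ that pops out of the characteristic-$p$ computation of $E|\gamma$. Without the twist, the map would fail to be equivariant by exactly this determinant factor.
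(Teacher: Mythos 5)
Your proposal is correct and follows essentially the same route as the paper: both verify the identity $E|\gamma=ad\,E=\det(\gamma)E$ in $\FF[X,Y]$ for $E=X^pY-XY^p$ by reducing $c$ to $0$, applying Frobenius, and (implicitly in the paper, explicitly in your write-up) using $a^p=a$, $b^p=b$, $d^p=d$, with the $\det$-twist on the source absorbing the resulting factor. Your additional checks --- Fermat's little theorem and the vanishing of the degenerate $T_p$ coset representative $\begin{psmallmatrix}p&0\\0&1\end{psmallmatrix}$ on both sides --- are correct details that the paper's terser proof leaves to the reader.
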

\begin{proof}
 If $\gamma= \begin{psmallmatrix} a &b\\ c&d\end{psmallmatrix}\in S_0(p)$ then 
\begin{align*}
\tilde \theta_k(P(X,Y))|\gamma&=\big((dX)^p(-bX+aY)-dX(-bX+aY)^p\big)P(dX,-bX+aY)\\
&=ad(X^pY-XY^p)P(dX,-bX+aY)\\
&=\tilde \theta_k(P(X,Y)| \gamma). 
\end{align*}
\end{proof}

\subsection{Galois representations and Serre weights}
In what follows, $I_p$ denotes the inertia group of $G_{\Qp}$. There exist two fundamental characters $\psi$ and $\psi'=\psi^p$ of level $2$ on $I_p$, which take values in $\FF$ (see \cite[\S2.4]{edixhoven92}). Given an integer $t$, we write $I(t)=\psi^t\oplus \psi^{pt}$. Note that $I(t)=I(pt)$ and $I(t_1)=I(t_2)$ whenever $t_1\equiv t_2\mod p^2-1$.

\begin{definition}\label{def:SW}
We define the \textbf{Serre weight} of $f$ at $p$, denoted $k(\overline{\rho}_f)$, to be $1+pa+b$ if
\[
\overline{\rho}_f|_{I_p}\cong I(a+bp)\cong I(b+ap),
\]
where $0\le a<b\le p-1$. 
\end{definition}

Since $p$ is fixed throughout this article, we shall refer to the Serre weight of $f$ at $p$ as simply \textit{the} Serre weight of $f$.

\begin{remark}\nf 
    It follows from \cite[Theorem~2.6]{edixhoven92} that if $f$ is of level $N$ and weight $k$ where $p\nmid N$ with $2\le k\le p$ and $\iota_p(a_p)\notin\cO^\times$,  the Serre weight of $f$ is $k$. When $k=p+1$, the Serre weight is $2$.
\end{remark}

\section{Mazur--Tate elements of modular forms with small weights}

Throughout this section, we assume the following hypotheses hold.

\vspace{3mm}
\begin{enumerate}
\item[\textbf{(FL)}] $p>k-1$.
\item[\textbf{(n-ord)}] $\iota_p(a_p)\notin\cO^\times$.
\end{enumerate}
\begin{remark} \nf Note that hypothesis \textbf{(FL)}  implies $\Theta_{n,j}(f,\omega^i)\in \Oo[G_n]$ since in this case the binomial coefficients $\binom{k-2}{j}$ appearing in Definition \ref{def:MT} are $p$-adic units. 
\end{remark} 

The main objective of this section is to prove Theorem~\ref{thmA} on the Iwasawa invariants of the Mazur--Tate elements of $f$.

\subsection{The logarithmic matrix and signed $p$-adic $L$-functions}

We review the definition of the logarithmic matrix attached to $f$ as studied in \cite{BFSuper}. We write $\cO[[\pi]]$ for the ring of power series in $\pi$, which is equipped with an $\cO$-linear operator $\varphi$ that sends $\pi$ to $(1+\pi)^p-1$ and an $\cO$-linear action of $\cG_\infty=\lim_{\leftarrow}\cG_n$ given by $\sigma\cdot\pi=(1+\pi)^{\chi_\cyc(\sigma)}-1$, where $\chi_\cyc$ is the cyclotomic character. The Mellin transform that sends $a\in\cO[[\cG_\infty]]$ to $a\cdot (1+\pi)$ induces an isomorphism
\[
\fM:\cO[[\cG_\infty ]]\stackrel{\sim}{\longrightarrow}\cO[[\pi]]^{\psi=0},\] where $\psi$ is a left-inverse of $\vp$.

Recall that $\gamma$ is a topological generator of $G_\infty$ and let $u=\chi_{\cyc}(\gamma)$. Consider the  automorphism 
$$
\Tw:\Oo[[X]]\rightarrow\Oo[[X]] ,\quad F(X)\mapsto  F\big(u(1+X)-1\big).
$$
For $i\in\ZZ$, define $\Tw^i$ as the map on $\cO[[X]]$ given by
\[
\Tw^i(F)=F(u^i(1+X)-1).
\]
For an integer $h\geq1$, define the polynomial
\[
\omega_{n,h} =\prod_{j=0}^{h-1}\Tw^{-j}(\omega_n).
\]

\begin{definition}
Let $q=\vp(\pi)/\pi \in\cO[[ \pi ]]$ and $\delta=p/(q-\pi^{p-1})\in\cO[[\pi]]^\times$. We define 
\[
P_f=\begin{bmatrix}
    0&\frac{-1}{\epsilon(p)q^{k-1}}\\ \delta^{k-1}&\frac{a_p(f)}{\epsilon(p)q^{k-1}}
\end{bmatrix}.
\]
For $n\ge1$, we define $C_{n,f}$ to be the $2\times2$ matrix of polynomials of degree $<(k-1)p^n$ that coincide with the image of 
\[
\fM^{-1}\left((1+\pi)\vp^n(P_f^{-1})\cdots \vp(P_f^{-1})\right)\,\bmod\,\omega_{n,k-1}.
\]

We define the matrices
\[
A_f=\begin{bmatrix}
    0&\frac{-1}{\epsilon(p)p^{k-1}}\\ 1&\frac{a_p(f)}{\epsilon(p)p^{k-1}}
\end{bmatrix},\quad Q_f=\begin{bmatrix}
    \alpha &-\beta\\ -\alpha\beta&\alpha\beta
\end{bmatrix}.
\]
\end{definition}
Note that
\begin{equation}\label{eq:diag}
    Q_f^{-1}A_fQ_f=\begin{bmatrix}
        \alpha^{-1}&0\\ 0& \beta^{-1}
    \end{bmatrix}.
\end{equation}
\begin{proposition}\label{prop:log-matrix}
    The sequence of matrices $A_f^{n+1}C_{n,f}$ converges to a matrix $M_{\log,f}$ defined over $K[[X]]$, with $$ M_{\log,f}\equiv A_f^{n+1}C_{n,f}\mod \omega_{n,k-1}.$$
\end{proposition}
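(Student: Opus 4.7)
The plan is to establish convergence inside $M_2(K[[\pi]])$ via the Mellin transform and transport the result back to $K[[X]]$. Define the lift
\[
\widetilde M_n := (1+\pi)\cdot A_f^{n+1}\cdot \vp^n(P_f^{-1})\cdots\vp(P_f^{-1}) \in M_2(K[[\pi]]),
\]
so that $\fM^{-1}(\widetilde M_n)$ reduces modulo $\fM(\omega_{n,k-1})$ to $A_f^{n+1}C_{n,f}$ by the definition of $C_{n,f}$. It then suffices to show that the sequence $(\widetilde M_n)_n$ is Cauchy and that its limit is compatible with each $\omega_{n,k-1}$.

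The key identity exploits the fact that $A_f$ has scalar entries in $K$, hence $\vp^m(A_f)=A_f$ for every $m\ge 1$. This gives
\[
\widetilde M_{n+1}-\widetilde M_n = (1+\pi)\,A_f^{n+1}\,\bigl[\vp^{n+1}(A_fP_f^{-1})-I\bigr]\,\vp^n(P_f^{-1})\cdots\vp(P_f^{-1}).
\]
A direct matrix multiplication using the definitions yields the lower-triangular form
\[
A_fP_f^{-1} = \begin{bmatrix} q^{k-1}/p^{k-1} & 0 \\ a_p(f)\bigl(\delta^{-(k-1)}-q^{k-1}/p^{k-1}\bigr) & \delta^{-(k-1)}\end{bmatrix}.
\]
Since $q \equiv p \pmod{\pi\cO[[\pi]]}$ (from $q=((1+\pi)^p-1)/\pi$) and $\delta^{-1}=(q-\pi^{p-1})/p \in 1+\pi\cO[[\pi]]$, each entry of $A_fP_f^{-1}-I$ lies in $\pi\,K[[\pi]]$. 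Applying $\vp^{n+1}$ and recalling $\vp^{n+1}(\pi)=\omega_{n+1}$, we conclude $\vp^{n+1}(A_fP_f^{-1})-I \in \omega_{n+1}\,M_2(K[[\pi]])$, hence the displayed difference lies in $\omega_{n+1}\,M_2(K[[\pi]])$.

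The hard step is to leverage this $\omega_{n+1}$-divisibility to obtain, after Mellin inversion, the required congruence modulo $\omega_{n,k-1}$. Note that $\omega_{n,k-1}=\prod_{j=0}^{k-2}\Tw^{-j}(\omega_n)$ has degree $(k-1)p^n$, while $\omega_{n+1}$ has degree $p^{n+1}$; the Fontaine--Laffaille hypothesis $p>k-1$ ensures $\deg \omega_{n+1}>\deg\omega_{n,k-1}$, providing the numerical room to run the argument. The main obstacle is controlling the interaction between the twisted cyclotomic factors appearing in $\omega_{n,k-1}$ and the partial product $\vp^n(P_f^{-1})\cdots\vp(P_f^{-1})$; the desired absorption of the twisted factors $\Tw^{-j}(\omega_n)$ for $j\ge 1$ into the matrix product follows along the lines of the Wach-module arguments of \cite{BFSuper} and uses that $P_f$ is expressly designed so that $q/p$ and $\delta$ behave like units in the relevant quotients. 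Granting this, the inverse system $\{A_f^{n+1}C_{n,f}\}_n$ is coherent, and by completeness of $M_2(K[[X]])$ in the topology defined by the ideals $(\omega_{n,k-1})$, it defines a unique limit $M_{\log,f}\in M_2(K[[X]])$ with $M_{\log,f}\equiv A_f^{n+1}C_{n,f}\pmod{\omega_{n,k-1}}$ for all $n$.
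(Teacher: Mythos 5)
Your computation of $A_fP_f^{-1}$ is correct, as are the telescoping identity and the observation that $A_fP_f^{-1}-I\in\pi\,M_2(K[[\pi]])$, whence $\widetilde M_{n+1}-\widetilde M_n\in\vp^{n+1}(\pi)\,M_2(K[[\pi]])$. But the step you label ``the hard step'' and then grant is precisely the content of the proposition, so the argument is incomplete where it matters. The required coherence is $A_f^{n+2}C_{n+1,f}\equiv A_f^{n+1}C_{n,f}\bmod\omega_{n,k-1}$, and $\omega_{n,k-1}=\prod_{j=0}^{k-2}\Tw^{-j}(\omega_n)$ contains the twisted factors $\Tw^{-j}(\omega_n)$ for $1\le j\le k-2$, whose roots are the points $u^{j}\zeta-1$ with $\zeta^{p^n}=1$; since $u^j$ is not a root of unity for $1\le j\le k-2<p-1$, these factors are coprime to $\omega_{n+1}=\omega_n\Phi_{n+1}$. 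Hence divisibility of the difference by $\omega_{n+1}$ --- even granting a clean dictionary under $\fM$, which is itself not multiplicative --- accounts at best for the single untwisted factor $\omega_n$ and says nothing about the factors $\Tw^{-j}(\omega_n)$ with $j\ge1$. The remark that \textbf{(FL)} gives $\deg\omega_{n+1}>\deg\omega_{n,k-1}$ is a red herring: no degree comparison converts divisibility by one polynomial into divisibility by a different, coprime one.

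The twisted factors in the genuine argument are not produced by the new factor $\vp^{n+1}(P_f^{-1})$ at all; they are extracted from the entries $\vp^m(q)^{k-1}$ of the earlier factors $\vp^m(P_f^{-1})$, $m\le n$, using the fact (cf.\ Lemma~\ref{lem:Mellin}, \cite{LLZ0}, \cite{LZWach}) that the Mellin/Amice transform converts order-$(k-1)$ vanishing of $\vp^m(q)^{k-1}$ at $\zeta_{p^{m+1}}-1$ into simple vanishing at the $k-1$ twisted points, i.e.\ divisibility by $\prod_{j}\Tw^{-j}(\Phi_m)$. One also needs a growth bound ($O(\log_p^{k-1})$) on the entries to pass from a coherent system modulo the $\omega_{n,k-1}$ to an actual element of $K[[X]]$, since the inverse limit of the quotients $K[[X]]/\omega_{n,k-1}$ is strictly larger than $K[[X]]$. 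Neither ingredient appears in your write-up. Note finally that the paper itself does not prove this statement: it cites \cite{BFSuper}*{Lemma 2.6} verbatim, and what that lemma supplies is exactly what your ``granting this'' assumes.
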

\begin{proof}
    See \cite[Lemma 2.6]{BFSuper}.
\end{proof}

\begin{remark}
    \nf Even though the results in \cite{BFSuper} are stated under the hypothesis that $p>k$, the results in Section 2 of \textit{op. cit.} only rely on the construction of an explicit basis of the Wach module attached to $f$ given in \cite[Proposition~V.2.3]{berger04} for which the hypothesis \textbf{(FL)} suffices.
\end{remark}

\begin{theorem}\label{thm:decomp}\label{lem:signed-cong}
  For all $ i\in\{0,1,\dots,p-2\}$, there exist $L_p(f,\sharp,\omega^i),L_p(f,\flat,\omega^i)\in \cO[[X]]\otimes_{\cO}K$ such that 
\[
\frac{1}{\alpha-\beta}\cdot \begin{bmatrix}
   L_p(f,\alpha,\omega^i)\\ L_p(f,\beta,\omega^i)
\end{bmatrix}=
 Q_f^{-1}M_{\log,f}\begin{bmatrix}
    L_p(f,\sharp,\omega^i)\\ L_p(f,\flat,\omega^i)
\end{bmatrix}.
\]
\end{theorem}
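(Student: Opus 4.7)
\textbf{Proof plan for Theorem~\ref{thm:decomp}.} The strategy is to \emph{define} the signed $p$-adic $L$-functions by inverting the desired formula and then to verify that the resulting series are bounded. Namely, set
\[
\begin{bmatrix} L_p(f,\sharp,\omega^i)\\ L_p(f,\flat,\omega^i)\end{bmatrix}
:=(\alpha-\beta)^{-1}\cdot M_{\log,f}^{-1}Q_f\begin{bmatrix} L_p(f,\alpha,\omega^i)\\ L_p(f,\beta,\omega^i)\end{bmatrix},
\]
a priori as elements of the fraction field of $K'[[X]]$. The content of the theorem is then that these two entries in fact lie in $\cO[[X]]\otimes_{\cO}K$, and in particular descend from $K'$ to $K$.

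The plan to prove boundedness is to reduce modulo $\omega_n$ for every $n$ and to check the congruence in $K[G_n]$ using Mazur--Tate data. Since $M_{\log,f}\equiv A_f^{n+1}C_{n,f}\pmod{\omega_{n,k-1}}$ by Proposition~\ref{prop:log-matrix}, and $\omega_n$ divides $\omega_{n,k-1}$, working modulo $\omega_n$ replaces $M_{\log,f}^{-1}$ with $C_{n,f}^{-1}A_f^{-(n+1)}$. Invoking the diagonalization \eqref{eq:diag} gives $A_f^{-(n+1)}Q_f=Q_f\operatorname{diag}(\alpha^{n+1},\beta^{n+1})$, so modulo $\omega_n$ the candidate signed series is
\[
(\alpha-\beta)^{-1}\,C_{n,f}^{-1}\,Q_f\begin{bmatrix} \alpha^{n+1}L_p(f,\alpha,\omega^i)\\ \beta^{n+1}L_p(f,\beta,\omega^i)\end{bmatrix}.
\]
Specializing $X\mapsto u^j(1+X)-1$ for $0\le j\le k-2$ and applying the interpolation property of $L_p(f,\Upsilon,\omega^i)$, each of $\alpha^{n+1}L_p(f,\alpha,\omega^i)$ and $\beta^{n+1}L_p(f,\beta,\omega^i)$ becomes, modulo $\omega_n$, an explicit $K'$-linear combination of the Mazur--Tate elements $\Theta_{n,j}(f,\omega^i)$ and $\nu^n_{n-1}\Theta_{n-1,j}(f,\omega^i)$ coming from Definition~\ref{def:MT2}. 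By hypothesis \textbf{(FL)} these are integral, so the column on the right lives in an $\cO'$-lattice.

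It remains to show that multiplication by $(\alpha-\beta)^{-1}C_{n,f}^{-1}Q_f$ keeps us in a bounded lattice and produces data defined over $K$ rather than $K'$. Integrality follows because the factor $(\alpha-\beta)^{-1}Q_f$ is exactly what converts the basis $(\alpha,\beta)$ into the symmetric data $(\Theta_{n,j}(f,\omega^i),\nu^n_{n-1}\Theta_{n-1,j}(f,\omega^i))$; the $K'$-valued coefficients cancel and one lands in $\cO[G_n]\otimes_\cO K$. Descent from $K'$ to $K$ is immediate once one observes that the construction is symmetric under the Galois involution $\alpha\leftrightarrow\beta$ of $K'/K$. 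The congruences obtained at each level $n$ are compatible, by the corestriction compatibility of the $\Theta_{n,j}$ together with the compatibility of the $C_{n,f}$ in Proposition~\ref{prop:log-matrix}, so the inverse limit over $n$ produces well-defined elements $L_p(f,\sharp,\omega^i),L_p(f,\flat,\omega^i)\in\cO[[X]]\otimes_\cO K$ satisfying the stated identity.

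The main obstacle is the bookkeeping of the twists $\Tw^j$: the congruence for $M_{\log,f}$ holds modulo $\omega_{n,k-1}=\prod_{j=0}^{k-2}\Tw^{-j}(\omega_n)$ precisely so that, after specializing to the $k-1$ cyclotomic twists $X\mapsto u^j(1+X)-1$, the relevant interpolation congruences modulo $\omega_n$ are simultaneously captured. Verifying that the resulting congruences together pin down a unique pair of bounded series requires checking that the system of twists $\{u^j(1+X)-1\}_{0\le j\le k-2}$ distinguishes power series modulo $\omega_{n,k-1}$; this is a standard but delicate computation using that the $\Tw^{-j}(\omega_n)$ are pairwise coprime for $n\gg 0$ under \textbf{(FL)}.
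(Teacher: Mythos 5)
Your attempt is not the paper's argument: the paper disposes of this statement in one line by citing \cite[Proposition~2.11]{BFSuper} together with the interpolation property, so all of the analytic content is outsourced to that reference (where it is extracted from Berger's explicit Wach-module basis under \textbf{(FL)}). Your from-scratch reconstruction has a genuine gap at its central step. You define the candidate pair as $(\alpha-\beta)^{-1}M_{\log,f}^{-1}Q_f\,(L_p(f,\alpha,\omega^i),L_p(f,\beta,\omega^i))^T$ and propose to verify boundedness level by level, ``replacing $M_{\log,f}^{-1}$ with $C_{n,f}^{-1}A_f^{-(n+1)}$ modulo $\omega_n$''. But $C_{n,f}$ is not invertible modulo $\omega_n$: its determinant is, up to a unit, $\prod_{m=1}^{n}\Phi_{m,k-1}$ (this is visible already from the computation behind Proposition~\ref{prop:Cnf}), which contains the factor $\prod_{m=1}^{n}\Phi_m=\omega_n/X$ and is therefore a zero divisor in $K[X]/(\omega_n)$. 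Equivalently, $\det M_{\log,f}$ vanishes at every interpolation point $u^j\zeta_{p^m}-1$, so your candidate is a priori a meromorphic function whose potential poles sit exactly at the points where you want to invoke interpolation; the finite-level congruence manipulation is not defined there, and multiplying the integral vector of Mazur--Tate data by $C_{n,f}^{-1}$ introduces denominators of degree comparable to $(k-1)p^n$ whose cancellation is precisely what has to be proved.

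What is actually needed, and what your sketch does not supply, is twofold. First, one must show that the column $(L_p(f,\alpha,\omega^i),L_p(f,\beta,\omega^i))^T$ pairs to zero against the appropriate cofactors of $M_{\log,f}$ at each zero of $\det M_{\log,f}$; this part does follow from interpolation, because both unbounded $L$-functions interpolate the same Mazur--Tate data, and it is where the full range of twists $0\le j\le k-2$ enters. Second, and crucially, vanishing at the zeros only makes the quotient holomorphic, not bounded: one needs the admissibility estimate that $L_p(f,\Up,\omega^i)$ has growth of order $\ord_p(\Up)\le (k-1)/2$ while $\det M_{\log,f}$ has complementary logarithmic growth, so that the quotient is $O(1)$ and hence lies in $\cO[[X]]\otimes_{\cO}K$ by the Amice--V\'elu/Vi\v{s}ik criterion. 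This growth argument is the heart of \cite[Proposition~2.11]{BFSuper} and cannot be replaced by finite-level congruences alone. Your Galois-descent remark for passing from $K'$ to $K$ is plausible but also needs the explicit check that $(\alpha-\beta)^{-1}Q_f^{-1}$ transforms correctly under $\alpha\leftrightarrow\beta$; as written it is an assertion, not a proof.
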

\begin{proof}
    It follows from \cite[Proposition 2.11]{BFSuper} and the interpolation properties of the $p$-adic $L$-functions.
\end{proof}

\begin{remark}
    \nf The power series $L_p(f,\sharp,\omega^i)$ and $L_p(f,\flat,\omega^i)$ can be regarded as the $\omega^i$-isotypic components of two $p$-adic $L$-functions $L_p(f,\sharp)$ and $L_p(f,\flat)$, which are bounded measures on $\Gal(\QQ(\mu_{p^\infty})/\QQ)$, as constructed in \cite[Theorem~3.25]{LLZ0}. Furthermore, they are nonzero power series when $k\ge 3$ or when $a_p\ne0$ (see \cite[Corollary~3.29 and Remark~3.30]{LLZ0}).
\end{remark}
\subsection{Iwasawa invariants of Mazur--Tate elements}
We establish an exact relationship between the signed $p$-adic $L$-functions and the Mazur--Tate elements, enabling us to express the Iwasawa invariants of the latter through the former. We begin by studying congruence properties of the logarithmic matrix, which will play a crucial role in our proof of the main theorem.

  Define for integers $h\geq1$ the polynomials 
\begin{align*}
\Phi_{n,h} &= \prod_{j=0}^{h-1}\Tw^{-j}(\Phi_n),\\
\Phi_{n,h}^+ &= \prod_{2\le m\leq n, \text{even}}\Phi_{m,h},\\
\Phi_{n,h}^- &= \prod_{1\le m\leq n, \text{odd}}\Phi_{m,h},
\end{align*}
where $\Phi_0=X$ and $\Phi_n= \sum_{i=0}^{p-1}(1+X)^{ip^n}$ is the $p^n$-th cyclotomic polynomial evaluated at $1+X$.

\begin{lemma}\label{lem:Mellin}
    As ideals of $\cO[[X]]$, we have
    \begin{align*}
        \left(\Phi_{n,k-1}^+\right)&=\fM^{-1}\left((1+\pi)\prod_{2\le m\le n, \text{even}}\vp^m(q)^{k-1}\right),\\
                \left(\Phi_{n,k-1}^-\right)&=\fM^{-1}\left((1+\pi)\prod_{1\le m\le n, \text{odd}}\vp^m(q)^{k-1}\right).\\
    \end{align*}
\end{lemma}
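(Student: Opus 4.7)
The plan is to reduce the identity to the elementary fact that $\vp^m(q)=\Phi_m(\pi)$ for $m\ge 1$ and then invoke the compatibility between the Mellin transform and cyclotomic polynomials. First, a direct computation gives
\[
\vp^m(q)=\frac{\vp^{m+1}(\pi)}{\vp^m(\pi)}=\frac{(1+\pi)^{p^{m+1}}-1}{(1+\pi)^{p^m}-1}=\sum_{i=0}^{p-1}(1+\pi)^{ip^m}=\Phi_m(\pi).
\]
Thus the right-hand side rewrites as $(1+\pi)\prod_{m\text{ even},\,2\le m\le n}\Phi_m(\pi)^{k-1}$, and the claim becomes the statement that $\fM$ sends the principal ideal generated by $\Phi_{n,k-1}^+$ to the one generated by this expression.

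To verify this, I would compare the two generators by computing their vanishing at finite-order characters $\eta$ of $\cG_\infty$. Under $\fM$, if $\eta(\gamma)=\zeta$ then evaluating $F(X)\in\cO[[X]]$ at $X=\zeta-1$ corresponds to evaluating $\fM(F)(\pi)$ at $\pi=\zeta^{u^{-1}}-1$. The factor $\Tw^{-j}(\Phi_m(X))$ vanishes precisely at characters $\eta$ for which $\eta\cdot\chi_\cyc^{-j}$ has order $p^{m+1}$; multiplying over $0\le j\le k-2$ and even $m\in[2,n]$ assembles the zero data of $\Phi_{n,k-1}^+$. On the $\pi$-side, $\Phi_m(\pi)^{k-1}$ vanishes to order $k-1$ at $\pi=\zeta'-1$ for $\zeta'$ a primitive $p^{m+1}$-th root of unity, and the automorphism $\zeta\mapsto\zeta^{u^{-1}}$ merely permutes these roots. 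Consequently, both generators carry identical zero data, so by a Weierstrass-preparation argument they differ by a unit in $\cO[[\cG_\infty]]$, giving the ideal equality.

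The role of the leading $(1+\pi)$ factor on the $\pi$-side is to ensure the product lies in $\cO[[\pi]]^{\psi=0}$ so that $\fM^{-1}$ is defined; this uses the standard identity $\psi((1+\pi)\vp(g))=0$ together with $\Phi_m(\pi)=\vp^m(q)\in\vp(\cO[[\pi]])$ for $m\ge 1$. The main technical hurdle is the bookkeeping in the preceding paragraph: precisely matching the $(k-1)$-fold twists on the $X$-side with the $(k-1)$-th power on the $\pi$-side, and verifying that different even values of $m\in[2,n]$ yield disjoint vanishing loci so that multiplicities add correctly. The minus version then follows from the identical argument with odd $m\in[1,n]$ in place of even $m$.
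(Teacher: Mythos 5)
Your opening computation $\vp^m(q)=\vp^{m+1}(\pi)/\vp^m(\pi)=\Phi_m(\pi)$ is correct, as is the remark that the factor $(1+\pi)$ is what places the right-hand side in $\cO[[\pi]]^{\psi=0}$. The gap is in the central step, where you transport ``zero data'' through $\fM$ via the substitution $\zeta\mapsto\zeta^{u^{-1}}$. The Mellin transform $\fM(a)=a\cdot(1+\pi)$ is a module map, not a ring homomorphism: it sends $1$ to $1+\pi$ and $F(X)=\sum c_nX^n$ to $\sum c_n(\gamma-1)^n\cdot(1+\pi)$, and for instance $\fM(X^2)=(1+\pi)^{u^2}-2(1+\pi)^u+(1+\pi)$, which is not $(1+\pi)^{-1}\fM(X)^2$. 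So $\fM$ is not multiplicative, zeros of a product do not transport factor by factor, and a single value $F(\zeta-1)$ corresponds not to one value of $\fM(F)$ but to a Gauss-sum--weighted linear combination of the values $\fM(F)(\zeta^b-1)$ over $b$ ranging through a full set of units. Your claimed correspondence already fails for $F=X$: this has a single zero, at the trivial character, whereas $\fM(X)=(1+\pi)^u-(1+\pi)=(1+\pi)\big((1+\pi)^{u-1}-1\big)$ has $\lambda$-invariant $p$ and vanishes at every $\pi=\zeta-1$ with $\zeta^p=1$. In general $\fM$ multiplies $\lambda$-invariants by $p$, because its image lies in $(1+\pi)\vp\left(\cO[[\pi]]\right)$ and $\vp$ scales $\lambda$-invariants by $p$; consequently the two generators you compare cannot ``carry identical zero data,'' and the Weierstrass-preparation conclusion does not follow.

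This hidden factor of $p$ is exactly why the lemma is not elementary: the same index $m$ appears on both sides ($\Tw^{-j}\Phi_m$ on the $X$-side, $\vp^m(q)=\Phi_m(\pi)$ on the $\pi$-side) only because the extra $\vp$ built into $\fM$ shifts degrees up by one power of $p$, and reconciling the two requires genuine input. The paper does not attempt a self-contained zero count: it quotes \cite{LLZ0} and \cite{LLZ3} for the divisibility of $\fM\left(\Phi_{n,k-1}^{\pm}\right)$ by the product of the $\vp^m(q)^{k-1}$, and then \cite{LZWach} for the statement that $\fM(F)=(1+\pi)\vp(G)$ with $G$ having the same Iwasawa invariants as $F$, after which equality of ideals follows by comparing sizes. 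A correct self-contained proof would have to establish at least one of these two inputs; your proposal supplies neither.
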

\begin{proof}
We only discuss the $+$ case since the $-$ case can be proved by the same argument.
It follows from \cite[Theorem~5.4]{LLZ0} and the discussion in \cite[P.5]{LLZ3} that $\fM\left(\Phi_{n,k-1}^+\right)$ is divisible by $\prod_{2\le m\le n, \text{even}}\vp^m(q)^{k-1}$.
Furthermore, \cite[Proposition~7.2]{LZWach} says that $\fM$ sends an element $F\in\cO[[X]]$ to $(1+\pi)\vp(G)$ where $G\in\cO[[\pi]]$ has the same Iwasawa invariants as $F$. Hence, the assertion follows.\end{proof}

From now on, we fix a uniformizer $\varpi$ of $\cO$.

\begin{proposition}\label{prop:Cnf}
    For all $n\ge1$, we have $$
C_{n,f}\equiv 
\begin{cases}
 \begin{bmatrix}
0 &*\Phi_{n,k-1}^+\\
*\Phi_{n,k-1}^- &0
\end{bmatrix}\Mod \varpi \cO[[X]]&\quad\text{if $n$ is odd,}\\
\\
 \begin{bmatrix}
*\Phi_{n,k-1}^- &0\\
0&*\Phi_{n,k-1}^+
\end{bmatrix}\Mod \varpi\cO[[X]] &\quad\text{if $n$ is even},
\end{cases}
$$
where $*$ denotes a unit in $\Oo[[ X]] $. 
\end{proposition}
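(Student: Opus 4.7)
The plan is to work entirely modulo $\varpi$, where hypothesis \textbf{(n-ord)} makes the product computation transparent. Since $a_p(f)\in\varpi\cO$, a direct calculation of the matrix inverse gives
\[
P_f^{-1}\equiv \begin{pmatrix} 0 & \delta^{-(k-1)}\\ -\epsilon_f(p)q^{k-1}& 0\end{pmatrix}\pmod{\varpi},
\]
which is antidiagonal, with upper-right entry in $\cO[[\pi]]^\times$ and lower-left entry a unit of $\cO[[\pi]]$ times $q^{k-1}$. Applying $\vp^m$ for $1\le m\le n$ produces $n$ antidiagonal matrices modulo $\varpi$ of the same shape.

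Next I would compute the product $\vp^n(P_f^{-1})\cdots\vp(P_f^{-1})\pmod\varpi$ by induction on $n$. A product of antidiagonal matrices is antidiagonal when the number of factors is odd and diagonal when it is even, which immediately yields the correct zero pattern. Tracking the surviving entries, one finds for $n$ odd that the $(2,1)$-entry is a unit $u^-\in\cO[[\pi]]^\times$ times $\prod_{\text{odd }1\le m\le n}\vp^m(q)^{k-1}$ and the $(1,2)$-entry is a unit $u^+$ times $\prod_{\text{even }2\le m\le n-1}\vp^m(q)^{k-1}$; the $n$ even case is analogous. In every case the prefactors $u^\pm$ are products of $\vp^m(\delta^{\pm(k-1)})$ with powers of $\epsilon_f(p)\in\cO^\times$, hence are units of $\cO[[\pi]]$.

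The remaining step is to multiply by $(1+\pi)$, apply $\fM^{-1}$, and reduce modulo $\omega_{n,k-1}$. Multiplying by $(1+\pi)$ lands us in $\cO[[\pi]]^{\psi=0}$ because all entries of the product lie in $\vp(\cO[[\pi]])$, so $\fM^{-1}$ is defined. Lemma~\ref{lem:Mellin} identifies the ideal generated by $\fM^{-1}\!\bigl((1+\pi)\prod_{\text{odd/even}}\vp^m(q)^{k-1}\bigr)$ in $\cO[[X]]$ as $(\Phi_{n,k-1}^{\mp})$. The unit factor $u^\pm\in\cO[[\pi]]^\times$ is absorbed using the compatibility of $\fM$ with Iwasawa $\mu$- and $\lambda$-invariants (which is the content of the proof of Lemma~\ref{lem:Mellin}): multiplying the argument by a unit of $\cO[[\pi]]$ leaves the Iwasawa invariants of the image unchanged, so the image remains a unit of $\cO[[X]]$ times $\Phi_{n,k-1}^{\pm}$. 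Finally, since $\Phi_{n,k-1}^{\pm}$ divides $\omega_{n,k-1}$, reducing modulo $\omega_{n,k-1}$ preserves the form of the entries, giving the claimed description of $C_{n,f}\pmod\varpi$. The main obstacle is this last step: one must upgrade Lemma~\ref{lem:Mellin}'s ideal-level statement to an equality that respects units under $\fM^{-1}$, i.e., verify that the unit factor $u^\pm\in\cO[[\pi]]^\times$ really produces a unit of $\cO[[X]]$ (not merely an element with the correct $\lambda$-invariant). This is exactly what the Iwasawa invariant compatibility guarantees.
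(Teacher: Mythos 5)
Your argument is correct and is essentially the paper's own proof: both reduce modulo $\varpi$ using \textbf{(n-ord)} to make each $\vp^m(P_f^{-1})$ antidiagonal, compute the alternating product to get the stated zero pattern with entries a unit times $\prod\vp^m(q)^{k-1}$ over odd or even $m$, and then invoke Lemma~\ref{lem:Mellin}. Your extra care in upgrading the ideal-level statement of Lemma~\ref{lem:Mellin} to an equality up to a unit is a genuine (and correctly resolved) point that the paper leaves implicit — note that working modulo $\varpi$ makes it easy, since in $\F[[X]]$ any element with $\mu=0$ and the correct $\lambda$-invariant is automatically a unit multiple of $\Phi_{n,k-1}^{\pm}$.
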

\begin{proof}
    We prove the case where $n$ is odd; the same argument applies to the even case. Under \textbf{(n-ord)}, we have $a_p(f)\equiv 0\mod\varpi$. Therefore, 
    \[
    \vp^n(P_f^{-1})\cdots \vp(P_f^{-1})\equiv\begin{bmatrix}
        0 & *\displaystyle\prod_{2\le m\le n, \text{even}}\vp^m(q)^{k-1}\\
        *\displaystyle\prod_{1\le m\le n,\text{odd}}\vp^m(q)^{k-1}&0
    \end{bmatrix}\mod \varpi\cO[[\pi]],
    \]
    where $*$ represents an element of $\varphi(\cO[[\pi]]^\times)$.
    Therefore, the proposition follows from Lemma~\ref{lem:Mellin}.
\end{proof}

\begin{proposition}\label{prop:Qnj-sharpflat}
    Let $i\in\{0,1,\dots, p-2\}$ and $j\in\{0,\dots,k-2\}$. We have for $n\ge1$
    \[
    \begin{bmatrix}
        Q_{n,j}(f,\omega^i)\\
       -\epsilon(p)p^{k-2} \nu_{n-1}^nQ_{n-1,j}(f,\omega^i)
    \end{bmatrix}\equiv \Tw^jC_{n,f}\begin{bmatrix}
        \Tw^j L_p(f,\sharp,\omega^i)\\
        \Tw^j L_p(f,\flat,\omega^i)
    \end{bmatrix}\mod \omega_n.
    \]
\end{proposition}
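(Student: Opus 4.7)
The plan is to use Theorem~\ref{thm:decomp} together with the interpolation property of the unbounded $p$-adic $L$-functions $L_p(f,\alpha,\omega^i)$ and $L_p(f,\beta,\omega^i)$, and then verify a purely algebraic matrix identity relating the $\alpha,\beta$-stabilizations $Q_{n,j}(f,\Upsilon,\omega^i)$ to the plain Mazur--Tate polynomials $Q_{n,j}(f,\omega^i)$ and the norm lift $\nu^n_{n-1}Q_{n-1,j}(f,\omega^i)$.

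First I would multiply both sides of the identity in Theorem~\ref{thm:decomp} by $Q_f$ to rewrite it as
\[
\frac{1}{\alpha-\beta}\,Q_f\begin{bmatrix}L_p(f,\alpha,\omega^i)\\ L_p(f,\beta,\omega^i)\end{bmatrix}=M_{\log,f}\begin{bmatrix}L_p(f,\sharp,\omega^i)\\ L_p(f,\flat,\omega^i)\end{bmatrix}.
\]
Applying the ring automorphism $\Tw^j$ of $K[[X]]$ entry-wise and then invoking Proposition~\ref{prop:log-matrix} to replace $\Tw^j(M_{\log,f})$ by $A_f^{n+1}\Tw^j(C_{n,f})$, I obtain a congruence modulo $\omega_n$. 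The key observation here is that $\omega_n$ divides $\Tw^j(\omega_{n,k-1})=\prod_{i=0}^{k-2}\Tw^{j-i}(\omega_n)$, since the factor indexed by $i=j$ is $\Tw^0(\omega_n)=\omega_n$ itself; this uses the hypothesis $0\le j\le k-2$. Combined with the interpolation property $\Tw^j L_p(f,\Upsilon,\omega^i)\equiv Q_{n,j}(f,\Upsilon,\omega^i)\pmod{\omega_n}$ for $\Upsilon\in\{\alpha,\beta\}$, this yields
\[
\frac{1}{\alpha-\beta}\,Q_f\begin{bmatrix}Q_{n,j}(f,\alpha,\omega^i)\\ Q_{n,j}(f,\beta,\omega^i)\end{bmatrix}\equiv A_f^{n+1}\Tw^j(C_{n,f})\begin{bmatrix}\Tw^j L_p(f,\sharp,\omega^i)\\ \Tw^j L_p(f,\flat,\omega^i)\end{bmatrix}\pmod{\omega_n}.
\]

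The remaining ingredient is the algebraic identity
\[
\frac{1}{\alpha-\beta}\,Q_f\begin{bmatrix}Q_{n,j}(f,\alpha,\omega^i)\\ Q_{n,j}(f,\beta,\omega^i)\end{bmatrix}=A_f^{n+1}\begin{bmatrix}Q_{n,j}(f,\omega^i)\\ -\epsilon(p)p^{k-2}\nu^n_{n-1}Q_{n-1,j}(f,\omega^i)\end{bmatrix},
\]
verified by direct computation. Using \eqref{eq:diag}, I write $A_f^{n+1}=Q_f\,\mathrm{diag}(\alpha^{-(n+1)},\beta^{-(n+1)})\,Q_f^{-1}$, compute $Q_f^{-1}$ explicitly, and observe that the two components of the resulting right-hand side are precisely $\alpha^{-(n+1)}Q_{n,j}(f,\omega^i)-\epsilon(p)p^{k-2}\alpha^{-(n+2)}\nu^n_{n-1}Q_{n-1,j}(f,\omega^i)$ and the analogous expression with $\beta$, which match Definition~\ref{def:MT2} for $Q_{n,j}(f,\alpha,\omega^i)$ and $Q_{n,j}(f,\beta,\omega^i)$ after multiplication by $Q_f$. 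Since $A_f$ is invertible (its eigenvalues being $\alpha^{-1}$ and $\beta^{-1}$, both nonzero), I then cancel $A_f^{n+1}$ on both sides of the previous congruence to conclude.

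The only real obstacle is the bookkeeping around $\Tw^j$ and the difference between the moduli $\omega_n$ and $\omega_{n,k-1}$: once one checks that $\omega_n$ is a factor of $\Tw^j(\omega_{n,k-1})$ in the range $0\le j\le k-2$, everything else reduces to either direct use of definitions (the interpolation formula and Definition~\ref{def:MT2}) or a one-line matrix calculation built on the diagonalization \eqref{eq:diag}.
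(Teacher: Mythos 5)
Your argument is correct and follows essentially the same route as the paper's: both rest on the interpolation property, Theorem~\ref{thm:decomp}, Proposition~\ref{prop:log-matrix}, the diagonalization \eqref{eq:diag}, and the observation that $\omega_n$ divides $\Tw^j(\omega_{n,k-1})$ for $0\le j\le k-2$. The only (cosmetic) difference is that you cancel the invertible matrix $A_f^{n+1}$ at the end, whereas the paper arranges the computation so as to cancel the matrix $\bigl[\begin{smallmatrix}\alpha^{-n-1}&\alpha^{-n-2}\\ \beta^{-n-1}&\beta^{-n-2}\end{smallmatrix}\bigr]$; the underlying identity is the same.
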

\begin{proof}
    By definition,
    \[\begin{bmatrix}
        \Tw^j L_p(f,\alpha,\omega^i)\\
        \Tw^j L_p(f,\beta,\omega^i)
    \end{bmatrix}\equiv \begin{bmatrix}
        \alpha^{-n-1}&\alpha^{-n-2}\\
\beta^{-n-1}&\beta^{-n-2}
    \end{bmatrix}  \begin{bmatrix}
        Q_{n,j}(f,\omega^i)\\
       -\epsilon(p)p^{k-2} \nu_{n-1}^nQ_{n-1,j}(f,\omega^i)
    \end{bmatrix}\mod \omega_n.
    \]
Furthermore, it follows from Proposition~\ref{prop:log-matrix} and \eqref{eq:diag} that
\begin{align*}
     \begin{bmatrix}
        \Tw^j L_p(f,\alpha,\omega^i)\\
        \Tw^j L_p(f,\beta,\omega^i)
    \end{bmatrix}&\equiv(\alpha-\beta)
Q_f^{-1}A_f^{n+1}\Tw^j C_{n,f}\begin{bmatrix}
        \Tw^j L_p(f,\sharp,\omega^i)\\
        \Tw^j L_p(f,\flat,\omega^i)
    \end{bmatrix}\\
   &\equiv (\alpha-\beta)\begin{bmatrix}
       \alpha^{-n-1}&0\\0&\beta^{-n-1}
   \end{bmatrix}Q_f^{-1}\Tw^j C_{n,f}\begin{bmatrix}
        \Tw^j L_p(f,\sharp,\omega^i)\\
        \Tw^j L_p(f,\flat,\omega^i)
    \end{bmatrix}\\
    &\equiv  \begin{bmatrix}
        \alpha^{-n-1}&\alpha^{-n-2}\\
\beta^{-n-1}&\beta^{-n-2}
    \end{bmatrix} \Tw^j C_{n,f}\begin{bmatrix}
        \Tw^j L_p(f,\sharp,\omega^i)\\
        \Tw^j L_p(f,\flat,\omega^i)
    \end{bmatrix}\mod\omega_n.
\end{align*}  
Therefore, the assertion follows after combining these congruences.
\end{proof}

\begin{definition}
    For $i\in\{0,1,\dots,p-2\}$ and $\star\in\{\sharp,\flat\}$, we define $\mu(f,\star,\omega^i)$ and $\lambda(f,\star,\omega^i)$ as the $\mu$ and $\lambda$-invariants of $L_p(f,\star,\omega^i)$.
\end{definition}

For the rest of the section, we assume that the following hypothesis holds.

\vspace{0.3cm}

\begin{itemize}
    \item[\textbf{(mu)}] For all $i\in\{0,1,\dots,p-2\}$, $\mu(f,\sharp,\omega^i)=\mu(f,\flat,\omega^i)\ne\infty$.
\end{itemize}

\vspace{0.3cm}

Under the assumption that \textbf{(mu)} holds, we write $\mu(f,\omega^i)$ for the common value $\mu(f,\sharp,\omega^i)=\mu(f,\flat,\omega^i)$. We are now ready to prove Theorem~\ref{thmA}.

\begin{theorem}\label{thm:FL}
    Suppose that \textbf{(FL)}, \textbf{(n-ord)} and \textbf{(mu)} hold. For $i\in\{0,1,\dots,p-2\}$, $j\in\{0,\dots,k-2\}$ and $n\gg0$, we have
    \[
    \mu\left(\Theta_{n,j}(f,\omega^i)\right)=\mu(f,\omega^i).
    \]
    Furthermore, 
    $$
\lambda(\Theta_{n,j}(f,\omega^{i}))=\begin{cases}
(k-1)q_n+\lambda(f,\flat,\omega^{i}) &\quad \text{if $n$ is odd,}\\
(k-1)q_n+\lambda(f,\sharp,\omega^{i}) &\quad \text{if $n$ is even.}\\
\end{cases}
$$
\end{theorem}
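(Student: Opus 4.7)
The plan is to isolate the top component of the matrix congruence in Proposition~\ref{prop:Qnj-sharpflat}, reduce modulo $\varpi$ using Proposition~\ref{prop:Cnf}, read off Iwasawa invariants of the resulting simple expression, and lift this mod~$(\varpi,\omega_n)$ identity via Lemma~\ref{lem:modpomega}. I describe the odd-$n$ case; the even case is symmetric, with the roles of $(\sharp,\Phi_{n,k-1}^+)$ and $(\flat,\Phi_{n,k-1}^-)$ exchanged. Via \eqref{eq:identification}, it is enough to compute the Iwasawa invariants of the canonical polynomial representative $Q_{n,j}(f,\omega^i)\in\cO[X]$ of degree $<p^n$.

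Set $a=\mu(f,\omega^i)$. Hypothesis \textbf{(mu)} ensures that $\varpi^{-a}L_p(f,\sharp,\omega^i)$ and $\varpi^{-a}L_p(f,\flat,\omega^i)$ both lie in $\cO[[X]]$, the latter with $\mu$-invariant exactly $0$. From Proposition~\ref{prop:Cnf}, for $n$ odd the first row of $C_{n,f}$ takes the shape $\bigl(\varpi A(X),\ u(X)\,\Phi_{n,k-1}^{+}+\varpi B(X)\bigr)$ for some $A,B\in\cO[[X]]$ and some unit $u\in\cO[[X]]^\times$. Substituting into the top component of the congruence of Proposition~\ref{prop:Qnj-sharpflat}, multiplying through by $\varpi^{-a}$, and applying $\Tw^j$ entrywise yields
\[
\varpi^{-a}Q_{n,j}(f,\omega^i)\;\equiv\;\Tw^j(u)\cdot\Tw^j(\Phi_{n,k-1}^{+})\cdot\varpi^{-a}\Tw^jL_p(f,\flat,\omega^i)\pmod{(\varpi,\omega_n)},
\]
the $L_p(f,\sharp,\omega^i)$ contribution vanishing modulo $\varpi$; this cancellation is the exact place where \textbf{(mu)} is invoked (without it, $\varpi^{-a}L_p(f,\sharp,\omega^i)$ need not even be integral). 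Because $\Tw^j(u)$ is a unit, $\varpi^{-a}L_p(f,\flat,\omega^i)$ has $\mu=0$, and $\Tw^j$ preserves Iwasawa invariants (as $u=\chi_\cyc(\gamma)\in 1+p\Z_p$ makes $\Tw^j$ the identity on $\cO[[X]]/\varpi$), the right-hand side, viewed in $\cO[[X]]$, has $\mu$-invariant $0$ and $\lambda$-invariant $\lambda(\Phi_{n,k-1}^+)+\lambda(f,\flat,\omega^i)=(k-1)q_n+\lambda(f,\flat,\omega^i)$. The formula $\lambda(\Phi_{n,k-1}^+)=(k-1)q_n$ for odd $n$ follows from \cite[Lemma~4.3 and Corollary~4.4]{GL}, or directly from the fact that each $\Phi_m$ is distinguished of degree $\phi(p^m)$ and that $\Phi_{n,k-1}^+$ is a product over even $m\le n-1$ and $j=0,\dots,k-2$.

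Finally, under \textbf{(FL)} we have $(k-1)q_n<(p-1)p^{n-1}<p^n$, so for $n\gg 0$ the numerical hypothesis of Lemma~\ref{lem:modpomega} is satisfied. Applying that lemma transfers the Iwasawa invariants from the right-hand side back to $\varpi^{-a}Q_{n,j}(f,\omega^i)$, giving $\mu(Q_{n,j}(f,\omega^i))=a$ and $\lambda(Q_{n,j}(f,\omega^i))=(k-1)q_n+\lambda(f,\flat,\omega^i)$, as required. The main technical obstacle is verifying the $\lambda<p^n$ condition needed to apply Lemma~\ref{lem:modpomega}: this is precisely the purpose of the Fontaine--Laffaille hypothesis, for without it the signed $\lambda$-contribution $(k-1)q_n$ could exceed $p^n$ and the invariants could no longer be read off cleanly modulo a single $\omega_n$.
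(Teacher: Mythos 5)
Your argument follows the paper's proof of Theorem~\ref{thm:FL} essentially step for step: isolate the top row of the congruence in Proposition~\ref{prop:Qnj-sharpflat}, reduce modulo $\varpi$ via Proposition~\ref{prop:Cnf} so that only the $\flat$ (resp.\ $\sharp$) term survives for $n$ odd (resp.\ even), compute the invariants of $\Tw^j\big(*\Phi_{n,k-1}^{\pm}\varpi^{-a}L_p(f,\star,\omega^i)\big)$ (your direct degree count for $\Phi_{n,k-1}^{+}$ as a product of $k-1$ twists of distinguished polynomials over the even $m\le n$ is correct and matches what the paper extracts from \cite{GL}), and transfer the invariants back through Lemma~\ref{lem:modpomega} once $(k-1)q_n+\lambda(f,\star,\omega^i)<p^n$, which \textbf{(FL)} guarantees for $n\gg0$.

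The one step you gloss over is the integrality of $\varpi^{-a}Q_{n,j}(f,\omega^i)$. You verify that $\varpi^{-a}L_p(f,\sharp,\omega^i)$ and $\varpi^{-a}L_p(f,\flat,\omega^i)$ lie in $\cO[[X]]$, but ``multiplying through by $\varpi^{-a}$'' a priori only yields a congruence modulo $\omega_n$ in $K[[X]]$; in order to reduce it modulo the ideal $(\varpi,\omega_n)$ of $\cO[[X]]$, and in order to invoke Lemma~\ref{lem:modpomega} with $G=\varpi^{-a}Q_{n,j}(f,\omega^i)$, you must first know that $\varpi^{-a}Q_{n,j}(f,\omega^i)\in\cO[X]$. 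This is not a formality: it is precisely the inequality $\mu\big(\Theta_{n,j}(f,\omega^i)\big)\ge\mu(f,\omega^i)$, i.e.\ half of the $\mu$-statement being proved, and it is the place where the degree bound on $Q_{n,j}(f,\omega^i)$ enters. The paper supplies the missing two lines: since $C_{n,f}$ has entries in $\cO[X]$ and both signed $p$-adic $L$-functions are divisible by $\varpi^a$, Proposition~\ref{prop:Qnj-sharpflat} gives $Q_{n,j}(f,\omega^i)\equiv\varpi^a\tilde P_n\bmod\omega_n$ with $\tilde P_n\in\cO[X]$, which one may take of degree $<p^n$; as $Q_{n,j}(f,\omega^i)$ also has degree $<p^n$ and $\omega_n$ has degree $p^n$, the two polynomials are equal, whence $\varpi^{-a}Q_{n,j}(f,\omega^i)=\tilde P_n\in\cO[X]$. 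With this inserted, your proof is complete and coincides with the paper's.
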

\begin{proof}
By definition, we may replace $\Theta_{n,j}(f,\omega^{i})$ by $Q_{n,j}(f,\omega^{i})$. Let $a=\mu(f,\omega^i)$. Let $n$ be an integer sufficiently large so that for both choices of $\star$,
\[
L_p(f,\star,\omega^i)\equiv \varpi^a P_{n}^\star\mod\omega_n
\]
for some polynomial $P_n^\star\in\cO[X]\setminus\varpi\cO[X]$. Since $C_{n,f}$ is defined over $\cO[X]$, Proposition \ref{prop:Qnj-sharpflat} implies that
\[
Q_{n,j}(f,\omega^i)\equiv \varpi^a \tilde P_n\mod\omega_n
\]
for some $\tilde P_n\in\cO[X]$. As $Q_{n,j}(f,\omega^i)$ is a polynomial of degree $<p^n$, this yields
    \[
    \varpi^{-a}Q_{n,j}(f,\omega^{i})\in\cO[X].
    \]
In particular, we have the following congruence of elements in $\cO[[X]]$:
 \[
    \begin{bmatrix}
      \varpi^{-a}  Q_{n,j}(f,\omega^i)\\
       -\epsilon(p)p^{k-2} \nu_{n-1}^n \varpi^{-a}Q_{n-1,j}(f,\omega^i)
    \end{bmatrix}\equiv \Tw^jC_{n,f}\begin{bmatrix}
        \Tw^j  \varpi^{-a}L_p(f,\sharp,\omega^i)\\
        \Tw^j  \varpi^{-a}L_p(f,\flat,\omega^i)
    \end{bmatrix}\mod \omega_n\cO[[X]]
    \]
    for $n\gg0$.
    Hence, it follows from Proposition~\ref{prop:Cnf} that
$$
\varpi^{-a}Q_{n,j}(f,\omega^i)\equiv
 \begin{cases}
\Tw^j\big(*\Phi_{n,k-1}^+\varpi^{-a}L_p(f,\flat,\omega^i)\big)\Mod (\varpi,\omega_n)&\quad\text{if $n$ is odd,}\\
\Tw^j\big(*\Phi_{n,k-1}^-\varpi^{-a}L_p(f,\sharp,\omega^i)\big)\Mod (\varpi,\omega_n)&\quad\text{if $n$ is even,}
  \end{cases}
$$
where both sides are elements in $\cO[[X]]$ and  $(\varpi,\omega_n)$ is considered as an ideal of $\cO[[X]]$.
Since multiplying by an element of $\cO[[X]]^\times$ preserves Iwasawa invariants, by \cite[Lemmas 2.9 and 3.7]{GL} we have 
\begin{align*}
\lambda\big(\Tw^j(*\Phi_{n,k-1}^\circ \varpi^{-a}L_p(f,\bullet,\omega^i))\big)&=(k-1)q_n+\lambda(f,\bullet,\omega^i)\\
\mu\big(\Tw^j(*\Phi_{n,k-1}^\circ \varpi^{-a}L_p(f,\bullet,\omega^i))\big)&=0
\end{align*}
for $(\circ,\bullet)\in \{(+,\flat),(-,\sharp)\}$ and $n$ of parity $-\circ$.  The sequence $p^n-(k-1)q_n\rightarrow \infty$ as $n\rightarrow\infty$ under assumption \textbf{(FL)}, so we can take $n\gg0$ such that $(k-1)q_n+\lambda(f,\bullet,\omega^i)<p^n$.
From Lemma \ref{lem:modpomega}, it follows that the Iwasawa invariants of $\Tw^j\left(*\Phi_{n,k-1}^\circ \varpi^{-a}L_p(f,\bullet,\omega^i)\right)$ agree with those of $\varpi^{-a}Q_{n,j}(f,\omega^i)$, which concludes the proof.
\end{proof}

 See Tables \ref{table1} and \ref{table2} for examples that illustrate the above theorem where $p$ is taken to be $5$ and $7$, respectively. In all these examples, the $\mu$-invariants are equal to zero.

\begin{corollary}\label{cor:bound}
    Let $f$ be as in Theorem~\ref{thm:FL}. For a sufficiently large integer $n$ that is odd, we have $\lambda(\Theta_{n,j}(f,\omega^i))\ge (k-1)q_n+k-2$.
\end{corollary}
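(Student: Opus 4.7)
The plan is to reduce to a lower bound on $\lambda(f,\flat,\omega^i)$ using Theorem~\ref{thm:FL}, and then establish this lower bound intrinsically. Specifically, for sufficiently large odd $n$, Theorem~\ref{thm:FL} gives
\[
\lambda\big(\Theta_{n,j}(f,\omega^i)\big) \;=\; (k-1)q_n + \lambda(f,\flat,\omega^i),
\]
so the asserted inequality is equivalent to the claim that
\[
\lambda(f,\flat,\omega^i) \;\geq\; k-2
\]
for every $i \in \{0,1,\dots,p-2\}$ under the running hypotheses \textbf{(FL)}, \textbf{(n-ord)}, and \textbf{(mu)}.

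To establish this intrinsic lower bound on the $\flat$ $p$-adic $L$-function, the natural strategy is to exploit the defining factorization of Theorem~\ref{thm:decomp}, which expresses $L_p(f,\flat,\omega^i)$ implicitly in terms of the logarithmic matrix $M_{\log,f}$ and the unbounded $p$-adic $L$-functions $L_p(f,\alpha,\omega^i)$ and $L_p(f,\beta,\omega^i)$. In the Fontaine--Laffaille setting, the entries of $M_{\log,f}$ are built from Berger's basis of the Wach module attached to $f$ and encode the $(k-1)$-step Hodge filtration on the associated filtered $\varphi$-module. One would argue that this structure forces $L_p(f,\flat,\omega^i)$ to carry at least $k-2$ zeros in the open unit disk: concretely, by evaluating both sides of the factorization at $X = u^j - 1$ for $j$ ranging over a suitable subset of $\{1,\dots,k-2\}$ and invoking the interpolation formulas for $L_p(f,\alpha,\omega^i)$ and $L_p(f,\beta,\omega^i)$ (with their explicit Euler factors and $L$-value normalizations), one should deduce that the $\flat$ component must vanish at these $k-2$ points. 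Combined with \textbf{(mu)}, which rules out $\mu = \infty$, Weierstrass preparation then yields $\lambda(f,\flat,\omega^i) \geq k-2$.

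An alternative, more hands-on route is to revisit the key congruence from the proof of Theorem~\ref{thm:FL},
\[
\varpi^{-a}Q_{n,j}(f,\omega^i) \;\equiv\; \Tw^j\!\bigl(\ast\, \Phi_{n,k-1}^+ \cdot \varpi^{-a} L_p(f,\flat,\omega^i)\bigr) \pmod{(\varpi,\omega_n)},
\]
and to try to extract the extra $k-2$ from a refined analysis modulo higher powers of $\varpi$, or by using parallel congruences across the different components $j \in \{0,\dots,k-2\}$ of the Mazur--Tate element, which are linked by the twisting operator $\Tw$.

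The main obstacle is the step establishing $\lambda(f,\flat,\omega^i) \geq k-2$ rigorously: it requires either (i) a careful computation of $M_{\log,f}$ at the relevant characters $X = u^j - 1$ from the explicit construction in \cite{berger04,BFSuper}, combined with the known interpolation formulas for the unbounded $p$-adic $L$-functions, or (ii) invoking existing lower-bound results on $\lambda$-invariants of $\sharp/\flat$ $p$-adic $L$-functions in the higher-weight non-ordinary setting. Everything else in the argument is then a direct application of Theorem~\ref{thm:FL}.
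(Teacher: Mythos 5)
Your reduction is exactly the one the paper makes: by Theorem~\ref{thm:FL} (applied with $n$ odd) the claim is equivalent to the intrinsic bound $\lambda(f,\flat,\omega^i)\ge k-2$, and you correctly identify that this should come from forced vanishing of the $\flat$ $p$-adic $L$-function at the critical points $X=u^j-1$. The paper closes precisely the step you flag as the ``main obstacle'' by citation: \cite[Lemma~5.5]{LLZ0.5} asserts that $L_p(f,\flat)$ vanishes at every character $\chi_\cyc^j\omega^i$ with $0\le j\le k-2$, which translates into divisibility of $L_p(f,\flat,\omega^i)$ by $X-u^j+1$ for the $k-2$ values $j\in\{0,\dots,k-2\}\setminus\{i\}$, whence the bound. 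So your option~(ii) is what actually happens. Your sketched option~(i) --- evaluating the factorization of Theorem~\ref{thm:decomp} at $X=u^j-1$ and using the interpolation formulas for $L_p(f,\alpha,\omega^i)$ and $L_p(f,\beta,\omega^i)$ --- is indeed the idea behind the cited lemma, but as written it is not a proof: one must carry out a genuine analysis of $Q_f^{-1}M_{\log,f}$ at these points to rule out the possibility that the constraints coming from the left-hand side are absorbed by the matrix entries or by the $\sharp$ component alone, and note that $\omega_{n,k-1}$ itself vanishes at these points, so Proposition~\ref{prop:log-matrix} gives no control there for free. Your second alternative (a refined analysis of the mod $(\varpi,\omega_n)$ congruence across the components $j$) is not how the paper proceeds and does not obviously yield the extra $k-2$. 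In short: right strategy, correct reduction, but the key vanishing statement is asserted rather than established; it needs the reference \cite[Lemma~5.5]{LLZ0.5} or an equivalent computation.
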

\begin{proof}
    In light of the formula given by Theorem~\ref{thm:FL}, it suffices to prove that 
    \begin{equation}\label{eqn:Lflatbound}
    \lambda(f,\flat,\omega^i)\ge k-2.
    \end{equation}
    Indeed, it follows from \cite[Lemma~5.5]{LLZ0.5} that the $p$-adic $L$-function $L_p(f,\flat)$ is zero when evaluated at $\chi_\cyc^j\omega^i$, where $\chi_\cyc$ is the cyclotomic character on $\Gal(\QQ(\mu_{p^\infty})/\QQ)$, $j\in \{0,\dots, k-2\}$ and $i\in\{1,\dots,p-2\}$. This is equivalent to evaluating $L_p(f,\flat,\omega^{j+i})$ at $X=u^j-1$. In particular, we see that $L_p(f,\flat,\omega^{i})$ is divisible by $X-u^{j}+1$, where $j\in\{0,1,\dots, k-2\}\setminus\{i\}$. Therefore, $\lambda(f,\flat,\omega^i)\ge k-2$, as required.
\end{proof}
Taking into account Corollary~\ref{cor:bound}, the majority of the examples presented in Tables~\ref{table1} and \ref{table2} attain the minimum possible value of $\lambda$-invariants.

\begin{remark}\nf If $f$ has weight $p+1$, we note that the $\lambda$-invariants of the Mazur--Tate elements follow the same pattern as in Theorem \ref{thm:FL} (i.e., they have the form $(k-1)q_n+O(1)$), despite $f$ not satisfying \textbf{(FL)}. This is because when $f$ is non-ordinary of weight $k=p+1$ we have $k(\overline\rho_f)=2$ and \cite[Corollary 5.3]{PW} (together with \cite[Corollary 8.9]{sprung17}) imply that if $\mu(\Theta_{n,0}(f,\omega^i))=0$ for $n\gg0$ then
\begin{align*}
\lambda(\Theta_{n,0}(f,\omega^i))
&= pq_n+
\begin{cases} 
p-1+\lambda(g,\flat,\omega^i)\quad& \text{if $n\gg0$ is odd,}\\
\lambda(g,\sharp,\omega^i) \quad& \text{if $n\gg0$ is even.}
\end{cases}
\end{align*}
Here $g$ is a $p$-non-ordinary eigen-newform of weight 2 with $\bar\rho_f\cong \bar\rho_g$. 
Note that if Theorem \ref{thm:FL} is true at weight $k=p+1$ then one could compare the description of $\lambda$-invariants in Theorem \ref{thm:FL} with the above equality to obtain the following relation between signed $\lambda$-invariants: 
\begin{align*}
\lambda(f,\flat,\omega^i)&=p-1+\lambda(g,\flat,\omega^i),\\
\lambda(f,\sharp,\omega^i)&=\lambda(g,\sharp,\omega^i).
\end{align*}
These equations are known to hold when $a_p(f)=0$ -- see \cite[Corollary 6.1]{GL}. In particular, equation \eqref{eqn:Lflatbound} also holds at weight $k=p+1$ when $a_p(f)=0$ and the inequality is strict precisely when $g$ has positive $\lambda^\flat$-invariant. 
\end{remark}

\subsection{Implication on the Bloch--Kato conjecture}\label{S:BK}

One consequence of Theorem~\ref{thm:FL} is the following description of the $p$-adic valuation of the $L$-values of $f$:

\begin{theorem}\label{thm:BK}
   Suppose that \textbf{(FL)}, \textbf{(n-ord)} and \textbf{(mu)} hold. Assume furthermore that
   \[\ord_p\varpi>\frac{p(k-1)}{p^2-1}.\]
   Let $i\in\{0,1,\dots,p-2\}$. If $\chi$ is a Dirichlet character of conductor $p^{n+1}$, then
    \[
   \ord_p\left(p^{j(n+1)}\tau(\chi)\cdot\frac{L(f,\overline\chi,j+1)}{(-2\pi i)^j\Omega_f^\delta}\right)=\mu(f,\star, \omega^i)\ord_p(\varpi)+\frac{(k-1)q_n+\lambda(f,\star,\omega^i)}{\phi(p^n)}
    \]
    for $n\gg0$, where $\tau(\chi)$ denotes the Gauss sum of $\chi$, $\star\in\{\sharp,\flat\}$ is chosen according to the sign of $n$ and $i$ satisfies $\chi|_{\cG_1}=\omega^{i-j}$.
\end{theorem}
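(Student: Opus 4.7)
The plan is to combine the interpolation property of Mazur--Tate elements with Theorem~\ref{thm:FL} and Lemma~\ref{lem:evaluation}. The key observation is that evaluating $Q_{n,j}(f,\omega^i)$ at $\zeta-1$ for an appropriate primitive $p^n$-th root of unity $\zeta$ produces, up to standard normalizations, exactly the quantity $p^{j(n+1)}\tau(\chi)L(f,\overline\chi,j+1)/\Omega_f^\delta$ appearing in the statement.

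First I would record the interpolation formula. For a Dirichlet character $\chi$ of conductor $p^{n+1}$ with $\chi|_{\cG_1}=\omega^{i-j}$, there is a primitive $p^n$-th root of unity $\zeta$ (determined by $\chi$) such that
\[
Q_{n,j}(f,\omega^i)(\zeta-1) \;=\; p^{j(n+1)}\tau(\chi)\cdot\frac{L(f,\overline\chi,j+1)}{\Omega_f^\delta},
\]
where $\delta=\sgn(-1)^i$. This is the standard interpolation property of the Mazur--Tate element, following directly from Definition~\ref{def:MT} applied to the modular symbol $\varphi_f^\delta$; I would cite \cite{MTT} rather than reprove it.

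Next, under the hypotheses of the theorem, Theorem~\ref{thm:FL} gives, for $n\gg 0$,
\[
\mu\big(\Theta_{n,j}(f,\omega^i)\big)=\mu(f,\star,\omega^i),\qquad \lambda\big(\Theta_{n,j}(f,\omega^i)\big)=(k-1)q_n+\lambda(f,\star,\omega^i),
\]
with $\star\in\{\sharp,\flat\}$ determined by the parity of $n$. To apply Lemma~\ref{lem:evaluation} to $F=\Theta_{n,j}(f,\omega^i)$, I would have to check that $\lambda(F)<\phi(p^n)\ord_p(\varpi)$. Since $q_n=(p^n\pm1)/(p+1)$, one has $\lim_{n\to\infty}(k-1)q_n/\phi(p^n)=p(k-1)/(p^2-1)$, so the standing assumption $\ord_p(\varpi)>p(k-1)/(p^2-1)$ together with the boundedness of $\lambda(f,\star,\omega^i)$ guarantees the required inequality for $n\gg 0$. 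Lemma~\ref{lem:evaluation} then yields
\[
\ord_p\big(Q_{n,j}(f,\omega^i)(\zeta-1)\big)=\mu(f,\star,\omega^i)\ord_p(\varpi)+\frac{(k-1)q_n+\lambda(f,\star,\omega^i)}{\phi(p^n)},
\]
and combining with the interpolation formula gives the claimed identity.

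I expect the Iwasawa-theoretic core of the argument to be routine once Theorem~\ref{thm:FL} and Lemma~\ref{lem:evaluation} are in hand. The main obstacle will be bookkeeping: ensuring that the evaluation of $Q_{n,j}(f,\omega^i)$ at $\zeta-1$ produces the normalized $L$-value $p^{j(n+1)}\tau(\chi)L(f,\overline\chi,j+1)/\Omega_f^\delta$ with the correct treatment of the Gauss sum, the factor $p^{j(n+1)}$ arising from the matrix $\begin{psmallmatrix} 1 & -a \\ 0 & p^n\end{psmallmatrix}$ in Definition~\ref{def:MT}, the sign $\delta=\sgn(-1)^i$ picking out the appropriate period, and the Teichm\"uller component $\omega^{i-j}$ used to pass between $\cG_{n+1}$ and $G_n$.
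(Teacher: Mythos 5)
Your proposal is correct and follows essentially the same route as the paper: cite the interpolation property from \cite{MTT} to identify $Q_{n,j}(f,\omega^i)(\zeta_{p^n}-1)$ with the normalized $L$-value, then apply Theorem~\ref{thm:FL} and Lemma~\ref{lem:evaluation}, verifying the hypothesis $\lambda(F)<\phi(p^n)\ord_p(\varpi)$ via the limit $(k-1)q_n/\phi(p^n)\to p(k-1)/(p^2-1)$ and the standing assumption on $\ord_p(\varpi)$. The paper carries out exactly this verification (writing $q_n=(p^n\pm1)/(p+1)$ and letting $p^{-n}\to 0$), so no further changes are needed.
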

\begin{proof}
    By \cite[(8.6)]{MTT}, evaluating $\vartheta_{n+1,j}(\varphi_f^{\mathrm{sgn}(-1)^i})$ at $\chi$ gives an element in $\overline\Qp$ whose $p$-adic valuation is equal to 
    \[\ord_p\left(p^{j(n+1)}\tau(\chi)\cdot\frac{L(f,\overline\chi,j+1)}{(-2\pi i)^j\Omega_f^\delta}\right).\]
    This is equivalent to evaluating the polynomial $Q_{n,j}(f,\omega^i)$ at $X=\zeta_{p^n}-1$, where $\zeta_{p^n}$ is a primitive $p^n$-th root of unity. 
    
    In light of Lemma~\ref{lem:evaluation} and Theorem~\ref{thm:FL}, it suffices to show that
    \[
    (k-1)q_n+\lambda_\star<\phi(p^n)\ord_p(\varpi).
    \]
    In the case where $n$ is even, $q_n=(p^n+1)/(p+1)$. Thus, the desired inequality is equivalent to
    \[
    (k-1)\frac{1+p^{-n}}{p+1}+\lambda_\star p^{-n}<\left(1-\frac{1}{p}\right)\ord_p(\varpi),
    \]
    As $p^{-n}\rightarrow 0$ as $n\rightarrow\infty$ and our hypothesis on $\ord_p(\varpi)$ ensures that
    \[
    (k-1)\frac{1}{p+1}<\left(1-\frac{1}{p}\right)\ord_p(\varpi).
    \]
    Therefore, the desired inequality holds when $n$ is sufficiently large. The same line of argument applies to the case where $n$ is odd.
\end{proof}

Let $V_f$ be Deligne's $G_\QQ$-representation attached to $f$ given in \cite{deligne69}. Our normalization is that the Hodge--Tate weights of $V_f|_{\Qp}$ are $0$ and $1-k$, with the Hodge--Tate weight of the $p$-adic cyclotomic character equal to $1$. Let $T_f$ be the canonical $G_\QQ$-stable $\cO$-lattice in $V_f$ given in \cite[\S8.3]{kato04}. For $j\in\{0,\dots,k-2\}$, write $T=T_f(j+1)$. 

Define $\Sha(\QQ(\mu_{p^n}),T^\vee)$ to be the Bloch--Kato--Shafarevich--Tate group of the Pontryagin dual $T^\vee$ of $T$ over $\QQ(\mu_{p^n})$. That is,
\[
\Sha(\QQ(\mu_{p^n}),T^\vee)=\frac{H^1_{\mathrm{f}}(\QQ(\mu_{p^n}),T^\vee)}{H^1_{\mathrm{f}}(\QQ(\mu_{p^n}),T^\vee)_{\mathrm{div}}},
\]
where $H^1_{\mathrm{f}}(\QQ(\mu_{p^n}),T^\vee)$ is the Bloch--Kato Selmer group from \cite{blochkato} and $H^1_{\mathrm{f}}(\QQ(\mu_{p^n}),T^\vee)_{\mathrm{div}}$  denotes its maximal divisible subgroup. 

Let $\Tam(T/\QQ(\mu_{p^n}))$ denote the ($p$-primary) Tamagawa number given as in \cite[P.847]{LLZ3}. Recall that it can be decomposed into a product 
\[
\Tam(T/\QQ(\mu_{p^n}))=\prod_{i=0}^{p-2}\Tam(T/\QQ(\mu_{p^n}))^{\omega^i},
\]
where $\Tam(T/\QQ(\mu_{p^n}))^{\omega^i}$ is the Tamagawa number for the representation $T(\omega^i)$ over $\QQ(\mu_{p^{n}})^\Delta$.
Let
\[
t_{n,\omega^i}=\ord_p\left(\big|\Sha(\QQ(\mu_{p^{n}}),T^\vee)^{\omega^i}\big|\times \Tam(T/\QQ(\mu_{p^n}))^{\omega^i}\right),
\]
where $\Sha(\QQ(\mu_{p^{n}}),T^\vee)^{\omega^i}$ denotes the $\omega^i$-isotypic component of $\Sha(\QQ(\mu_{p^{n}}),T^\vee)$. It was proven on P.849 \textit{op. cit.} that if $\ord_p(a_p(f))>\frac{k-1}{2p}$,  $3\le k\le p$, $K=\Qp$, and \textbf{(FL)} and \textbf{(n-ord)} hold, then there exist integers $a_{\star,\omega^i}$ and $b_{\star,\omega^i}$ such that for $n\gg0$,
\begin{equation}
t_{n+1,\omega^i}-t_{n,\omega^i}=q_n^\star+a_{\star,\omega^i}\phi(p^n)+b_{\star,\omega^i},    \label{eq:LLZ}
\end{equation}
where $q_n^*=(k-1)q_n$ if \textbf{(mu)} holds. The integers $a_{\star,\omega^i}$ and $b_{\star,\omega^i}$ are the $\mu$-invariant and the $\lambda$-invariant (after subtracting certain explicit constants) of some signed Selmer group.    This can be regarded as the algebraic analogue of the formula given by Theorem~\ref{thm:BK}. Indeed, the Bloch--Kato conjecture (after base-change from $\QQ$ to $\QQ(\mu_{p^n})$) predicts that for $n\gg0$,
\[
   \ord_p\left(p^{j(n+1)}\tau(\chi)\cdot\frac{L(f,\overline\chi,j+1)}{\Omega_f^\delta}\right)=\frac{t_{n+1,\omega^i}-t_{n,\omega^i}}{
\phi(p^n)}.
    \]
    In other words, Theorem~\ref{thm:BK} and \eqref{eq:LLZ} are consistent with this prediction. In fact, they are equivalent to each other if the signed Iwasawa main conjecture that relates the characteristic ideal of the aforementioned signed Selmer groups to the corresponding signed $p$-adic $L$-functions holds.

\section{Mazur--Tate elements of modular forms with medium weights}

The main objective of this section is to prove Theorem~\ref{thmB}. We first prove preliminary results on the image of the map $\theta_k$. We then review Edixhoven's mod $p$ multiplicity one result, which allows us to relate the Mazur--Tate elements of two modular forms of different weights.

\subsection{Image of the theta map} \label{image_theta}
For \S\ref{image_theta}, we fix an integer $k\geq 2$.
\begin{definition}\label{def_delta}
   Let $s(k)$ be the integer in $\{1,\dots, p-1\}$ such that $k-1\equiv s(k)\mod (p-1)$ and let $k'\in \{0, \dots, p-2\}$ denote the remainder of $k-2$ divided by $p-1$. Define
    \[\delta=\delta(k,p):=
    \begin{cases}
        0&\text{if }\lfloor\frac{k-2}{p+1}\rfloor\le \lfloor \frac{k'}2\rfloor,\\
        1&\text{if }\lfloor \frac{k'}2\rfloor+1\le \lfloor\frac{k-2}{p+1}\rfloor\le \lfloor \frac{k'}2\rfloor+\frac{p-1}2,\\
        2&\text{otherwise.}
    \end{cases}
    \]
\end{definition}

\begin{remark}\label{rk:remainder}
\nf Note that $s(k)=k'+1$ and, in particular,
\[
s(k)=k-1-\left\lfloor\frac{k-2}{p-1}\right\rfloor (p-1).
\]
\end{remark}

\begin{definition}
Let $\sim$ be the equivalence relation on $\displaystyle\frac{\Z}{(p^2-1)\Z}$ defined by $t\sim pt$ for all $t$.
Define $\sL^\irr(k)$ to be the set of $\displaystyle 
    t\in\frac{\Z}{(p^2-1)\Z}/\sim$ such that there exists an eigen-newform $f$ of weight $k$ and level $\Gamma_1(N)$ with $\overline\rho_f|_{I_p}\cong I(t)$. 
Furthermore, let $\sL^\irr_\theta(k)$ denote the subset of $\sL^\irr(k)$ consisting of all elements which occur for eigen-newforms that are in the image of $\theta_k$.
\end{definition}

\begin{proposition}\label{prop:theta-image}
If $k\le p+2$, then $\sL_\theta^\irr(k)=\emptyset$.
If $p+2<k<p^2+1$, then 
    \[\sL_\theta^\irr(k)=\left\{s(k)+j(p-1):1\le j\le\left\lfloor\frac{k-2}{p+1}\right\rfloor+\delta,j\ne\left \lfloor \frac{k'+2}{2}\right\rfloor ,\left\lfloor \frac{p+k'+3}2\right\rfloor \right\}.\]
\end{proposition}
\begin{proof}
We follow the line of argument of the proof of \cite[Lemma~5.4]{PW}.
It follows from \cite[Theorem~3.4]{ashstevens} and \cite[Lemma~4.10]{PW} that 
\begin{itemize}
    \item $\sL^\irr(k)=\{s(k)\}$ and $\sL^\irr_\theta(k)=\emptyset$ for $2\le k\le p+2$;
    \item $\sL^\irr(k)=\{s(k)\}\bigcup\sL^\irr_\theta(k)$ and $\sL^\irr_\theta(k)=\{t+p+1:t\in \sL^\irr(k-(p+1))\}$ for $k>p+2$.
\end{itemize}
Therefore, an inductive argument shows that
\begin{equation}\label{LirrTheta}
\sL_\theta^\irr(k)=\left\{s\left(k-i(p+1)\right)+i(p+1):1\le i\le\left\lfloor\frac{k-2}{p+1}\right\rfloor \right\}.
\end{equation}

Let us write $k-2=q(p-1)+k'$, where $q\in\Z$ and $0\le k'<p-1$. By Remark~\ref{rk:remainder}, we have
\begin{align*}
    s\left(k-i(p+1)\right)+i(p+1)&=k-1-\left\lfloor\frac{k-i(p+1)-2}{p-1}\right \rfloor(p-1)\\
    &=q(p-1)+k'+1-\left\lfloor q+\frac{k'-i(p+1)}{p-1}\right\rfloor (p-1)\\
    &=k'+1-\left\lfloor \frac{k'-i(p+1)}{p-1}\right\rfloor (p-1)\\
    &=k'+1+\left(i+\left\lceil \frac{2i-k'}{p-1}\right\rceil\right) (p-1).
\end{align*}

Since $i\le (k-2)/(p+1)$ and $k<p^2+1$, we have $i<p-1$. When $k'\in\{0,1\}$, we have $\displaystyle\left\lceil \frac{2i-k'}{p-1}\right\rceil=1$ for $1\le i\le (p-1)/2$. Otherwise, $\displaystyle\left\lceil \frac{2i-k'}{p-1}\right\rceil=2$. Therefore, it follows from \eqref{LirrTheta} that
\[
\sL_\theta^\irr(k)=\left\{s(k)+j(p-1):2\le j\le \left\lfloor\frac{k-2}{p+1}\right\rfloor+1\right\}
\]
if $(p-1)/2\ge\displaystyle\left\lfloor\frac{k-2}{p+1}\right\rfloor$.
Otherwise,
\[
\sL_\theta^\irr(k)=\left\{s(k)+j(p-1):2\le j\le \left\lfloor\frac{k-2}{p+1}\right\rfloor+2,j\ne \frac{p+3}{2} \right\}.
\]

Similarly, if $k'\ge2$, then 
\[
    \left\lceil \frac{2i-k'}{p-1}\right\rceil=
\begin{cases}
0&1\le i\le\lfloor k'/2\rfloor,\\
1&\lfloor k'/2\rfloor+1\le i\le \lfloor (p-1+k')/2\rfloor,\\
2&\text{otherwise}.
\end{cases}
\]
Therefore, if $\displaystyle\left\lfloor\frac{k-2}{p+1}\right\rfloor\le \lfloor k'/2\rfloor$, then 
\[
\sL_\theta^\irr(k)=\left\{s(k)+j(p-1):1\le j\le \left\lfloor\frac{k-2}{p+1}\right\rfloor\right\}.
\]
If $\displaystyle\lfloor k'/2\rfloor+1\le \left\lfloor\frac{k-2}{p+1}\right\rfloor\le \lfloor (p-1+k')/2\rfloor$, then
\[
\sL_\theta^\irr(k)=\left\{s(k)+j(p-1):1\le j\le \left\lfloor\frac{k-2}{p+1}\right\rfloor+1,j\ne \lfloor k'/2\rfloor+1 \right\}.
\]
If $\displaystyle \left\lfloor\frac{k-2}{p+1}\right\rfloor\ge \lfloor (p-1+k')/2\rfloor+1$, then $\sL_\theta^\irr(k)$ is given by
\[
\left\{s(k)+j(p-1):1\le j\le \left\lfloor\frac{k-2}{p+1}\right\rfloor+2,j\ne \lfloor k'/2\rfloor+1 ,\lfloor (p+3+k')/2\rfloor\right\}.
\]
This concludes the proof.\end{proof}

We are interested in whether $s(k)$ is equivalent to an element of $\sL_\theta^\irr(k)$. This is the same as asking whether
\[
 s(k)+j(p-1)\equiv s(k)\quad\text{or}\quad ps(k)\mod p^2-1
\]
for some $j$ as described by Proposition~\ref{prop:theta-image}.
This in turn is equivalent to
\[
j\equiv0\quad\text{or}\quad s(k)\mod p+1.
\]
For $k<p^2+1$, and $1\le j\le \displaystyle\left\lfloor\frac{k-2}{p+1}\right\rfloor+\delta$, the first congruence cannot occur.
When $k'=0$ so that $s(k)=1$, we see that $j\equiv s(k)\mod p+1$ does not hold since $j$ is at least $2$. When $k'=1$, $j=s(k)=2$ can happen as soon as $k\ge 2p$. Indeed, when $k=2p$, 
$\sL_\theta^\irr(k)=\{2p\}$, and $2p$ represents the same representation of $I_p$ as $2$. For $k'\ge 2$, we can guarantee that $j\equiv s(k)\mod p+1$ does not occur if 
\begin{equation}\label{weight_bound}
\left\lfloor\frac{k-2}{p+1}\right\rfloor+\delta<s(k).
\end{equation}

\begin{example}\nf When $p=5$ and $k=8$, we have $k'=2$ and $s(k)=3$. In this case,
\[
\sL_\theta^\irr(k)=\left\{s(k)+(p-1)\right\}=\left\{7\right\},
\]
and $7$ is not congruent to $3$ or $15$ modulo $p^2-1=24$. Thus, $s(k)$ is not equivalent to an element of $\sL_\theta^\irr(k)$.
The same can be said when $p=5$ and $k=12$. However, if $p=5$ and $k=16$,
\[
\sL_\theta^\irr(k)=\left\{s(k)+j(p-1):j=1,3\right\}=\{7,15\}.
\]
In this case, \eqref{weight_bound} does not hold, and $\sL_\theta^\irr(k)$ contains an element that is equivalent to $3$. 
\end{example}

\begin{corollary}\label{cor:nonzero-modp}
Let $f\in S_k(\Gamma_1(N),\overline{\Qp})$ such that  
\begin{itemize}
        \item[(i)]$\overline{\rho}_f$ is irreducible and $\overline\rho_f|_{G_{\Qp}}$ is not decomposable,
        \item[(ii)] $2<k(\overline\rho_f)<p+1$ and $k$ satisfies the inequality \eqref{weight_bound}.
    \end{itemize}
    Then  $\Phi_k(\overline\vp_f^\pm)\ne0$.
\end{corollary}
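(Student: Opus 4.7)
The plan is to argue by contradiction: suppose $\Phi_k(\overline\vp_f^\pm)=0$, and aim to deduce that the class of $s(k)$ in $\Z/(p^2-1)\Z$ modulo $\sim$ must lie in $\sL_\theta^\irr(k)$, which will contradict the inequality \eqref{weight_bound} via Proposition~\ref{prop:theta-image}.

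First I would record the vanishing $\Phi_k\circ\theta_k=0$: since $\tilde\theta_k(P)(X,Y)=(X^pY-XY^p)P(X,Y)$ evaluates to $0$ at $(X,Y)=(0,1)$, we have $\im(\theta_k)\subseteq\ker(\Phi_k)$ as Hecke modules. Next, I would invoke the mod $p$ multiplicity-one theorem of Ribet and Edixhoven \cite{ribet91,edixhoven92}: under hypothesis (i), the $\overline\rho_f$-isotypic component of $H^1_c(\Gamma_1(N),\overline V_{k-2})^\pm$ is one-dimensional, and hence is spanned by $\overline\vp_f^\pm$ itself.

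The key step would then be to combine these inputs with the Ash--Stevens classification of mod $p$ Hecke eigensystems (the same ingredient that underlies Proposition~\ref{prop:theta-image} and \cite[Lemma 4.10]{PW}): any eigensystem appearing in $\ker(\Phi_k)$ must already arise from $\im(\theta_k)$, since otherwise its inertia type would force it to remain visible in $H^1_c(\Gamma_1(Np),\F(a^{k-2}))$. Combined with the previous paragraph, the assumption $\Phi_k(\overline\vp_f^\pm)=0$ would then force $\overline\vp_f^\pm\in\im(\theta_k)$, and consequently the inertia type at $p$ of $\overline\rho_f$ lies in $\sL_\theta^\irr(k)$.

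Finally, hypothesis (ii) should supply the contradiction. Because $2<k(\overline\rho_f)<p+1$, Definition~\ref{def:SW} yields $\overline\rho_f|_{I_p}\cong I(k(\overline\rho_f)-1)$; since $k(\overline\rho_f)\equiv k\pmod{p-1}$ and both $k(\overline\rho_f)-1$ and $s(k)$ lie in $\{1,\ldots,p-1\}$, we have $s(k)=k(\overline\rho_f)-1$, so the inertia class is precisely the class of $s(k)$. The inequality \eqref{weight_bound}, via the analysis immediately following Proposition~\ref{prop:theta-image}, rules this out. The main obstacle is the middle step, namely cleanly verifying that no ``new'' Hecke eigensystem can appear in $\ker(\Phi_k)$ beyond those produced by $\theta_k$; this is precisely why both the irreducibility of $\overline\rho_f$ and the non-decomposability of $\overline\rho_f|_{G_{\Qp}}$ are required in hypothesis (i).
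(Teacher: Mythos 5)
Your overall strategy coincides with the paper's: reduce to showing $\overline\vp_f^\pm\notin\im(\theta_k)$ and then rule out the corresponding inertia type using Proposition~\ref{prop:theta-image} together with the inequality \eqref{weight_bound}. Your final step — deducing from $2<k(\overline\rho_f)<p+1$ that $\overline\rho_f|_{I_p}\cong I(s(k))$, so that membership in $\im(\theta_k)$ would place the class of $s(k)$ in $\sL_\theta^\irr(k)$, contradicting \eqref{weight_bound} — is correct and in fact spells out a detail the paper leaves implicit.

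The gap is exactly where you flag it. What you need is $\ker(\Phi_k)\subseteq\im(\theta_k)$; the containment $\im(\theta_k)\subseteq\ker(\Phi_k)$ that you verify from $\tilde\theta_k(P)(0,1)=0$ points the wrong way. Your proposed justification of the key step — that an eigensystem in $\ker(\Phi_k)$ not arising from $\im(\theta_k)$ would have an inertia type forcing it to ``remain visible'' in $H^1_c(\Gamma_1(Np),\F(a^{k-2}))$ — is circular: determining which eigensystems survive $\Phi_k$ is precisely the content of the claim. The appeal to mod $p$ multiplicity one is also misplaced here: the Ribet--Edixhoven results used in this paper (Theorem~\ref{thm:Ed} and Corollary~\ref{cor:Ed}) concern $J_\QQ(\overline\QQ)[\fm]$ and weight-two parabolic cohomology at level $Np$, not the higher-weight module $H^1_c(\Gamma_1(N),\overline V_{k-2})$, and in any case one-dimensionality of an eigenspace does not by itself decide whether that eigenspace lies in $\im(\theta_k)$. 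The paper closes the gap with a single citation: \cite[Theorem~3.4(a)]{ashstevens} gives the exactness $\ker(\Phi_k)=\im(\theta_k)$ as Hecke modules, so $\Phi_k(\overline\vp_f^\pm)=0$ if and only if $\overline\vp_f^\pm\in\im(\theta_k)$, with no multiplicity-one input and no use of hypothesis (i) at that step. If you replace your middle paragraph by this citation, the rest of your argument goes through and agrees with the paper's proof.
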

\begin{proof}
    This follows from combining Proposition~\ref{prop:theta-image} and \cite[Theorem~3.4(a)]{ashstevens}, the latter of which tells us that $\Phi_k(\overline\vp_f^\pm)=0$ if and only if $\overline \vp_f^\pm$ belongs to the image of $\theta_k$.
\end{proof}

\begin{example} \nf Let $p=7$ and let $k\in \{6,12,18, 24, 30, 36\}$. We compute using Proposition~\ref{prop:theta-image} that
\begin{itemize}
\item $\sL_\theta^\irr(6)=\emptyset$
\item $\sL_\theta^\irr(12)=\{11\}$
\item $\sL_\theta^\irr(18)=\{11,17\}$
\item $\sL_\theta^\irr(24)=\{11,17\}$
\item $\sL_\theta^\irr(30)=\{11,17,19\}$
\item $\sL_\theta^\irr(36)=\{11,17,19,35\}$
\end{itemize}
Note that $s(k)=5$ and $ps(k)=35$ are not congruent mod $p^2-1=48$ to any element of  $\sL_\theta^\irr(k)$, except when $k=36$. This implies that $\Phi_k(\overline\varphi_f^\pm)\neq 0$ for each modular form in the last block of Table \ref{table4}, except the one with weight 36. 

\end{example}

\subsection{Proof of Theorem~\ref{thmB}}

We now review Edixhoven's mod $p$ multiplicity one result.

\begin{theorem}\label{thm:Ed}
    Let $g\in S_k(\Gamma_1(N),\epsilon,\overline\Qp)$ be a normalized eigen-newform such that $p\nmid N$ and $2< k< p+1$. Let $J_\QQ$ be the Jacobian of the modular curve $X_1(pN)_{\QQ}$. Let $\HH$ be the subring of $\End(J_\QQ)$ generated by the Hecke operators $T_\ell$ and the Diamond operators $\langle a\rangle_N$, $a\in(\ZZ/N\ZZ)^\times$ and $\langle b\rangle_p$, $b\in(\ZZ/p\ZZ)^\times$. Let $\fm$ be the maximal ideal of $\HH$ corresponding to the eigenvalues of $g$ modulo $\varpi$ and let $\FF=\HH/\fm$. Suppose that $a_p\notin\cO^\times$. Then $J_\QQ(\overline\QQ)[\fm]$ is an $\FF$-vector space of dimension 2.
\end{theorem}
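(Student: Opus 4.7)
The plan is to show that $V := J_\QQ(\overline\QQ)[\fm]$, viewed as an $\FF[G_\QQ]$-module, satisfies $\dim_\FF V = 2$. Since the Hecke eigenvalues on $V$ reduce to those of $g$ modulo $\varpi$, every Jordan--H\"older constituent of $V$ has Frobenius traces at unramified primes matching $\overline\rho_g$; by Chebotarev density and the (assumed) absolute irreducibility of $\overline\rho_g$, every simple constituent of $V$ must be isomorphic to $\overline\rho_g$. Semisimplicity of $V$ as an $\FF[G_\QQ]$-module follows because the $G_\QQ$-action commutes with the $\HH/\fm = \FF$-action, so $V \cong \overline\rho_g^{\oplus n}$ for some integer $n \ge 1$. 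The lower bound $n \ge 1$ is immediate from the existence of $g$ via Eichler--Shimura, so the content of the theorem is the upper bound $n \le 1$.

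For the upper bound I would work locally at $p$. Taking the scheme-theoretic closure of $V$ inside the N\'eron model of $J$ over $\Z_p$ produces a finite flat group scheme $\mathcal{V}$ over $\Z_p$. Since $2 < k < p+1$ forces the Hodge--Tate weights of $V|_{G_{\Qp}}$ into the Fontaine--Laffaille range $[0, p-2]$, the generic fibre is classified by a Fontaine--Laffaille module $M$ of $\FF$-dimension $2n$, whose filtration has jumps only at $0$ and $k-1$, each of multiplicity $n$. Under the non-ordinary hypothesis $a_p \in \fm$, the restriction of $\overline\rho_g$ to $I_p$ takes the irreducible shape $I(t)$ dictated by the Serre weight, so the semi-linear Frobenius on $M$ is pinned into a specific form up to the choice of $n$.

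The key step, and what I expect to be the main obstacle, is converting this Fontaine--Laffaille data into a bound on the multiplicity $n$. My strategy would be to compare $\mathrm{gr}^{k-1} M$ with the $\fm$-isotypic component of the mod $p$ cusp form space $S_k(\Gamma_1(Np), \FF)$ via a mod $p$ Eichler--Shimura-type isomorphism, which identifies appropriate graded pieces of crystalline cohomology with spaces of mod $p$ forms. Standard multiplicity one for newforms in weight $2 < k < p+1$ then forces the $\fm$-eigenspace in $S_k(\Gamma_1(Np), \FF)$ to be one-dimensional, yielding $n = 1$. The delicate point is that $X_1(Np)$ has semistable rather than good reduction at $p$, so one must work with the Deligne--Rapoport model and carefully track how $\mathcal{V}$ sits with respect to the smooth and supersingular components of the special fibre; the non-ordinary hypothesis $a_p \in \fm$ is precisely what places the $\fm$-torsion on the supersingular locus, where Raynaud's classification of finite flat $\FF$-vector space schemes and the Fontaine--Laffaille comparison can be combined cleanly. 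This is, in essence, the route taken by Edixhoven in \cite{edixhoven92}, whose result we are invoking.
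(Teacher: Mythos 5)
The paper's proof of this statement is a two-line verification: under $2<k<p+1$ and $a_p\notin\cO^\times$, \cite[Theorem~2.6]{edixhoven92} shows that $\overline\rho_g|_{G_{\Qp}}$ is irreducible, and $a_p\equiv 0\bmod\varpi$ gives $a_p^2\not\equiv\epsilon(p)\bmod\varpi$; the statement is then literally a special case of \cite[Theorem~9.2]{edixhoven92}. Your proposal instead tries to sketch the internals of Edixhoven's proof, while conceding at the end that you are ``invoking'' his result anyway. If the intent is to invoke it, what is actually required --- and what you never do explicitly --- is to check its hypotheses, namely the two conditions above; the local irreducibility is precisely where the assumptions $2<k<p+1$ and non-ordinarity enter, and your sketch never isolates this.

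As a sketch of the multiplicity one argument itself, the proposal has two genuine problems. First, semisimplicity of $J_\QQ(\overline\QQ)[\fm]$ does not follow from the observation that the Galois action commutes with the $\FF$-action: commuting with scalars says nothing about semisimplicity. The correct input is the Boston--Lenstra--Ribet theorem, which uses absolute irreducibility of $\overline\rho_g$ together with the Eichler--Shimura relations to conclude $J[\fm]\cong\overline\rho_g^{\oplus n}$. Second, the local analysis at $p$ is incoherent as stated: $J_1(Np)$ has semistable, not good, reduction at $p$, so the scheme-theoretic closure of $V$ in the N\'eron model is not a finite flat group scheme over $\Zp$ in any useful sense; and even granting finite flatness, a finite flat group scheme over $\Zp$ has Hodge--Tate weights in $\{0,1\}$, so there is no associated Fontaine--Laffaille module with ``filtration jumps at $0$ and $k-1$.'' The weight $k$ enters Edixhoven's argument through the nebentypus at $p$ (the weight-$k$ form of level $N$ contributes to $J_1(Np)[\fm]$ with the diamond operators at $p$ acting through $\omega^{k-2}$, exactly as in the map $\Phi_k$ used elsewhere in this paper) and through Dieudonn\'e theory on the special fibre of the Deligne--Rapoport model, not through a Fontaine--Laffaille classification over $\Zp$. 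Since you end by citing Edixhoven regardless, these flaws do not invalidate the conclusion, but the sketch as written would not survive being turned into a proof, and the efficient route is simply the hypothesis check the paper performs.
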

\begin{proof}
Note that our assumptions on $k$ and $a_p$ ensure that $\overline\rho_g|_{G_{\Qp}}$ is irreducible by \cite[Theorem~2.6]{edixhoven92} and $a_p^2\not\equiv\epsilon(p)\mod\varpi$.  Therefore, this is a special case of \cite[Theorem~9.2]{edixhoven92}. 
\end{proof}

\begin{corollary}\label{cor:Ed}
    Let $f\in S_{k_f}\left(\Gamma_1(N),\overline{\Qp}\right)$ and $g\in S_{k_g}\left(\Gamma_1(N),\overline\Qp\right)$. Suppose that $\overline\rho_f\cong \overline\rho_g$ and that $g$ satisfies the hypotheses of Theorem~\ref{thm:Ed}. Then 
    there exist  constants $c^\pm\in\FF=\HH/\fm$ such that
    $$
\Phi_{k_f}(\overline\vp_f^\pm)= c^\pm\cdot\Phi_{k_g}(\overline\vp_g^\pm).
 $$
\end{corollary}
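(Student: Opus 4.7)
The plan is to deduce the corollary from Theorem~\ref{thm:Ed} by showing that both $\Phi_{k_f}(\overline\vp_f^\pm)$ and $\Phi_{k_g}(\overline\vp_g^\pm)$ lie in a common one-dimensional $\FF$-subspace of the mod $p$ cohomology, so that proportionality is automatic.

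First I would verify that the two symbols live in the same ambient space. Since $\overline\rho_f\cong\overline\rho_g$, their determinants agree modulo $\varpi$; restricting to $I_p$ forces $\chi_{\cyc}^{k_f-1}\equiv\chi_{\cyc}^{k_g-1}$, and hence $k_f\equiv k_g\pmod{p-1}$. Consequently $\FF(a^{k_f-2})=\FF(a^{k_g-2})$ as $S_0(p)$-modules, and both symbols lie in $H^1_c(\Gamma_1(Np),\FF(a^{k_g-2}))$. Moreover, by the Hecke-equivariance of $\Phi_{k_f}$ and $\Phi_{k_g}$ (with $T_p$ going to $U_p$), together with $\overline\rho_f\cong\overline\rho_g$ and $\overline{a_p(f)}=\overline{a_p(g)}=0$ by non-ordinarity, both symbols are $\HH$-eigenforms with the same eigensystem modulo $\varpi$, namely the one given by $\fm$. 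In particular, both lie in the $\fm$-torsion subspace.

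The main step is to invoke a mod $p$ Eichler--Shimura-type identification of the $\fm$-torsion in $H^1_c(\Gamma_1(Np),\FF(a^{k_g-2}))$ with $J_\QQ(\overline\QQ)[\fm]$. Because $\Gamma_1(Np)$ acts trivially on $\FF(a^{k_g-2})$, the underlying cohomology group matches the constant-coefficient case; the coefficient twist only modifies the action of Hecke double cosets supported on matrices in $S_0(p)\setminus\Gamma_1(Np)$, and this modification is absorbed into the specification of $\fm$. Theorem~\ref{thm:Ed} then provides $\dim_{\FF} J_\QQ(\overline\QQ)[\fm]=2$, and the involution $\iota=\begin{psmallmatrix}-1&0\\0&1\end{psmallmatrix}$ splits this two-dimensional space into two one-dimensional $\pm$-eigenlines. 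A direct computation using $\tilde\Phi_k(P(X,-Y))=(-1)^{k-2}\tilde\Phi_k(P)$, combined with the fact that $\iota$ acts on $\FF(a^{k_g-2})$ by $(-1)^{k_g-2}$, shows that $\Phi_{k_f}(\overline\vp_f^\pm)$ and $\Phi_{k_g}(\overline\vp_g^\pm)$ both land in the $\pm$-eigenline. The existence of the constants $c^\pm\in\FF$ is then immediate.

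The main obstacle I expect is verifying that Edixhoven's multiplicity one result passes cleanly through the coefficient twist: one needs the $\fm$-localization of $H^1_c(\Gamma_1(Np),\FF(a^{k_g-2}))$ to still be two-dimensional over $\FF$, with complex conjugation splitting it into one-dimensional pieces. This requires a careful matching of Hecke actions between the twisted cohomology and the untwisted Jacobian torsion, and is precisely where the hypothesis $a_p(g)\notin\cO^\times$ enters essentially, since it guarantees (via \cite[Theorem~2.6]{edixhoven92}) that $\overline\rho_g|_{G_{\Qp}}$ is irreducible and hence that $\fm$ is non-Eisenstein in the strong sense needed for the identification.
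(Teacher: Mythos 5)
Your overall strategy coincides with the paper's: both arguments feed Edixhoven's multiplicity one statement through an Eichler--Shimura/Albanese--Poincar\'e duality identification to conclude that the $\fm$-part of $H^1_c(\Gamma_1(Np),\FF(a^{k_g-2}))$ splits under complex conjugation into one-dimensional $\pm$-eigenlines, and then place both $\Phi_{k_f}(\overline\vp_f^\pm)$ and $\Phi_{k_g}(\overline\vp_g^\pm)$ in the same eigenline. Your preliminary observations (that $k_f\equiv k_g\bmod p-1$ so the coefficient modules agree, that $\Gamma_1(Np)$ acts trivially on $\FF(a^{k_g-2})$, and that irreducibility of $\overline\rho_g$ kills the boundary contribution when passing between compactly supported and parabolic cohomology) all match what the paper does, if more implicitly in places.

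However, your final sentence --- that once both vectors lie in a one-dimensional space ``the existence of the constants $c^\pm\in\FF$ is then immediate'' --- has a genuine gap. Two vectors in a line are proportional only if the intended \emph{denominator} is nonzero: if $\Phi_{k_g}(\overline\vp_g^\pm)=0$ while $\Phi_{k_f}(\overline\vp_f^\pm)\neq0$, no constant $c^\pm$ exists and the corollary fails. You must therefore prove $\Phi_{k_g}(\overline\vp_g^\pm)\neq0$. This does not come for free: the map $\Phi_{k}$ has a potentially large kernel in general (essentially the image of the theta operator). The paper closes this by citing \cite[Theorem~3.4(a)]{ashstevens}, which gives that $\ker\Phi_{k_g}$ is trivial precisely in the range $2<k_g<p+1$ (compare Proposition~\ref{prop:theta-image}, where $\sL^\irr_\theta(k_g)=\emptyset$ for $k_g\le p+2$), combined with the fact that $\overline\vp_g^\pm\neq0$ because $\vp_g^\pm$ is normalized to be cohomological. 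This is where the hypothesis $2<k_g<p+1$ does work beyond merely enabling Theorem~\ref{thm:Ed}, and the resulting nonvanishing $\Phi_{k_g}(\overline\vp_g^\pm)\neq0$ is reused later (in the proof of Theorem~\ref{thm:medium-weight}), so it cannot be elided. Note also that no such injectivity is available for $\Phi_{k_f}$ when $k_f$ is large, which is exactly why the corollary only asserts the existence of $c^\pm$ (possibly zero) rather than their nonvanishing; the latter requires the separate analysis of Corollary~\ref{cor:nonzero-modp}.
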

\begin{proof}
In this proof, we write $\Gamma=\Gamma_1(Np)$. Let $\fm$ denote the maximal ideal defined in the statement of Theorem~\ref{thm:Ed}.
   As explained in the discussion given in \cite[P.411]{Vat99}, after applying the Albanese map combined with the Poincar\'e duality, Theorem~\ref{thm:Ed} implies that the $\fm$-component of the parabolic cohomology group $H^1_P(\Gamma,\cO)_\fm$ is an $\FF$-vector space of dimension two. This implies in turn that the complex conjugation eigenspace $H^1_P(\Gamma,\cO)_\fm^\pm$ is an $\FF$-vector space of dimension one. 
   
   Recall that  $H^1_P(\Gamma,\cO)$ coincides with the image of the compactly supported cohomology group $H^1_c(\Gamma,\cO)$ in $H^1(\Gamma,\cO)$ (see the discussion in \cite[P.404]{Vat99}). As the kernel of this map corresponds to boundary symbols and we assume that $\overline\rho_f$ is irreducible, we deduce that $H^1_c(\Gamma,\cO)^\pm_\fm$ is of dimension one over $\FF$.

   According to \cite[Theorem~3.4(a)]{ashstevens}, the kernel of $\Phi_{k_g}$ is trivial if $2< k_g<p+1$. Therefore, $\Phi_{k_g}(\overline\vp_g^\pm)$ is an $\FF$-basis of  $H^1_c(\Gamma,\cO)^\pm_\fm$. The hypothesis that $\overline\rho_f\cong \overline\rho_g$ implies that $\Phi_{k_f}(\overline\vp_f^\pm)$ belongs to $H^1_c(\Gamma,\cO)^\pm_\fm$, from which the corollary follows.
\end{proof}

\begin{theorem}
\label{thm:medium-weight}
 Let $f\in S_{k_f}\left(\Gamma_1(N),\overline{\Qp}\right)$ and $g\in S_{k_g}\left(\Gamma_1(N),\overline\Qp\right)$ be $p$-non-ordinary cuspidal eigen-newforms such that 
\begin{enumerate}
\item[(i)]  $2<k_g<p+1$,
\item[(ii)] $\overline\rho_f\cong \overline\rho_g$, 
\item[(iii)] $k_f<(k_g-1-\delta)(p+1)$, where $\delta=\delta(k_f,p)$ is as in Definition \ref{def_delta}.
 \end{enumerate}
  Then there exists a choice of cohomological periods $\Omega_f^\pm$ and $\Omega_g^\pm$ such that
 $$
\Phi_{k_f}(\overline\vp_f^\pm)= \Phi_{k_g}(\overline\vp_g^\pm).
 $$
\end{theorem}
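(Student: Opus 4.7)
The plan is to invoke Corollary~\ref{cor:Ed} to produce constants $c^\pm\in\FF$ with $\Phi_{k_f}(\overline\vp_f^\pm)=c^\pm\Phi_{k_g}(\overline\vp_g^\pm)$, and then to verify that hypothesis~(iii) forces $c^\pm\in\FF^\times$, so that a rescaling of the cohomological periods upgrades proportionality to equality.

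First I would identify the Serre weight of $f$. Since $g$ is non-ordinary with $3\le k_g\le p$, the remark following Definition~\ref{def:SW} gives $k(\overline\rho_g)=k_g$, and hypothesis~(ii) transports this to $k(\overline\rho_f)=k_g$, so that $\overline\rho_f|_{I_p}\cong I(k_g-1)$. Comparing determinants of $\overline\rho_f$ and $\overline\rho_g$ on inertia yields $k_f\equiv k_g\pmod{p-1}$, and since $k_g-1\in\{2,\dots,p-1\}$, this forces $s(k_f)=k_g-1$. With this identification, hypothesis~(iii) reads $k_f<(s(k_f)-\delta)(p+1)$, where $\delta$ is the invariant attached to $k_f$ by Proposition~\ref{prop:theta-image}. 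Dividing by $p+1$, subtracting $2/(p+1)$, and taking floors gives $\lfloor(k_f-2)/(p+1)\rfloor+\delta<s(k_f)$, which is exactly the inequality \eqref{weight_bound}. The non-decomposability assumption in Corollary~\ref{cor:nonzero-modp} holds because \cite[Theorem~2.6]{edixhoven92} applied to $g$ forces $\overline\rho_g|_{G_{\Qp}}$—hence also $\overline\rho_f|_{G_{\Qp}}$—to be irreducible, and the absolute irreducibility of $\overline\rho_f$ is a standing hypothesis of the paper. Thus Corollary~\ref{cor:nonzero-modp} yields $\Phi_{k_f}(\overline\vp_f^\pm)\ne 0$.

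Finally, Corollary~\ref{cor:Ed} gives the relation $\Phi_{k_f}(\overline\vp_f^\pm)=c^\pm\Phi_{k_g}(\overline\vp_g^\pm)$ for some $c^\pm\in\FF$. Because $\overline\vp_g^\pm$ is cohomological (hence nonzero mod $\varpi$) and $\Phi_{k_g}$ is injective by \cite[Theorem~3.4(a)]{ashstevens} for $2<k_g<p+1$, the right-hand side is nonzero, and the non-vanishing just established forces $c^\pm\in\FF^\times$. Lifting $c^\pm$ to $\tilde c^\pm\in\cO^\times$ and replacing $\Omega_f^\pm$ by $\tilde c^\pm\Omega_f^\pm$—equivalently, rescaling $\vp_f^\pm$ by $(\tilde c^\pm)^{-1}$, which preserves the cohomological normalization $\|\vp_f^\pm\|=1$—produces the claimed equality. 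The only place that demands care is threading the correct $\delta$ through hypothesis~(iii) (the invariant attached to $k_f$, not $k_g$); beyond that the argument is a routine combination of Edixhoven's mod $p$ multiplicity one, which reduces the question to mod $p$ theta-map non-vanishing, and the Ash--Stevens analysis of $\sL^\irr_\theta(k_f)$, which provides exactly that non-vanishing under \eqref{weight_bound}.
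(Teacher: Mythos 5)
Your proposal follows exactly the paper's own argument: use Edixhoven's theorem and hypothesis (ii) to pin down $\overline\rho_f|_{I_p}\cong I(k_g-1)$ and local irreducibility, invoke Corollary~\ref{cor:nonzero-modp} together with condition (iii) to get $\Phi_{k_f}(\overline\vp_f^\pm)\ne0$, and then conclude via the multiplicity one statement of Corollary~\ref{cor:Ed} that the proportionality constant is a unit, absorbed into the periods. The extra details you supply (the determinant comparison giving $s(k_f)=k_g-1$ and the explicit reduction of (iii) to the inequality \eqref{weight_bound}) are correct and are left implicit in the paper's proof.
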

\begin{proof}
By \cite[Theorem~2.6]{edixhoven92}, we have $\overline\rho_f|_{G_{\Qp}}=\overline\rho_g|_{G_{\Qp}}$ is irreducible and $\overline\rho_f|_{I_p}=\overline\rho_g|_{I_p}=I(k_g-1)$. We have already seen in the proof of Corollary~\ref{cor:Ed} that $\Phi_{k_g}(\overline\vp_g^\pm)\ne0$. Corollary~\ref{cor:nonzero-modp} and conditions (i)--(iii) ensure that $\Phi_{k_f}(\overline\vp_f^\pm)\ne0$ (note that (i) and (ii) imply $s(k_f)=k_g-1$). Therefore, the constant $c$ given by Corollary~\ref{cor:Ed} is nonzero. After multiplying the periods by a $p$-adic unit if necessary, we can take $c^\pm=1$.
\end{proof}

We have the following immediate corollary:
\begin{corollary}\label{cor:ThmB}
    Let $f$ and $g$ be modular forms satisfying the hypotheses of Theorem \ref{thm:medium-weight}. Then for all $n\geq 0$ we have $
\lambda(\Theta_{n}(f,\omega^i))=\lambda(\Theta_{n}(g,\omega^i))$
and  
$$
\mu(\Theta_{n}(f,\omega^i))=0 \quad \text{if and only if} \quad \mu(\Theta_{n}(g,\omega^i))=0.
$$
\end{corollary}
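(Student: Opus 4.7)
The plan is to establish the single congruence
\[
\Theta_n(f,\omega^i)\equiv \Theta_n(g,\omega^i)\pmod{\varpi}
\]
in $\cO[G_n]$ for every $n\ge 0$, and then extract both assertions from the resulting equality of reductions in $\FF[G_n]$.

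The first step is to rewrite the zeroth-component of the Mazur--Tate element in terms of the map $\Phi_{k_f}$. Unwinding Definition~\ref{def:MT} and extracting the coefficient of $Y^{k_f-2}$ (namely, evaluation of a homogeneous polynomial at $(X,Y)=(0,1)$, which is $\tilde\Phi_{k_f}$) gives
\[
\vartheta_{n+1,0}(\varphi_f^\pm)=\sum_{a\in(\Z/p^{n+1}\Z)^\times}\tilde\Phi_{k_f}\bigl(\varphi_f^\pm|\gamma_a(\{\infty\}-\{0\})\bigr)\cdot\sigma_a,
\]
where $\gamma_a=\begin{psmallmatrix}1 & -a\\ 0 & p^{n+1}\end{psmallmatrix}\in S_0(p)$. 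Reducing modulo $\varpi$ and using the $S_0(p)$-equivariance of $\tilde\Phi_{k_f}$ to move $\gamma_a$ outside, one obtains
\[
\overline{\vartheta_{n+1,0}(\varphi_f^\pm)}=\sum_{a}\bigl(\Phi_{k_f}(\overline\varphi_f^\pm)\,\big|\,\gamma_a\bigr)(\{\infty\}-\{0\})\cdot\sigma_a \in \FF[\cG_{n+1}],
\]
and the same formula holds with $f,k_f$ replaced by $g,k_g$.

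The second step is to invoke Theorem~\ref{thm:medium-weight}, which supplies the identity $\Phi_{k_f}(\overline\varphi_f^\pm)=\Phi_{k_g}(\overline\varphi_g^\pm)$ for a suitable choice of periods. (For this equality to make sense, the target modules $\FF(a^{k_f-2})$ and $\FF(a^{k_g-2})$ must coincide, which follows from the fact that $\overline\rho_f\cong\overline\rho_g$ forces $k_f\equiv k_g\pmod{p-1}$ via a comparison of the determinant characters.) Substituting this into the displayed formula yields the identity $\overline{\vartheta_{n+1,0}(\varphi_f^\pm)}=\overline{\vartheta_{n+1,0}(\varphi_g^\pm)}$, i.e.,
\[
\vartheta_{n+1,0}(\varphi_f^\pm)\equiv\vartheta_{n+1,0}(\varphi_g^\pm)\pmod{\varpi}.
\]
Applying the ring homomorphism $\omega^i\colon \cO[\cG_{n+1}]\to\cO[G_n]$ then produces the desired congruence.

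Finally, the congruence means $\Theta_n(f,\omega^i)$ and $\Theta_n(g,\omega^i)$ have the same image in $\FF[G_n]$. That image is nonzero precisely when the $\mu$-invariant vanishes, giving the $\mu=0$ biconditional. When both $\mu$-invariants vanish, the common nonzero reduction -- viewed as a polynomial in $X$ via the identification~\eqref{eq:identification} -- has a well-defined smallest index with a nonzero coefficient, which equals both $\lambda$-invariants, so they agree. The main obstacle is the bookkeeping in step one: one has to check that evaluating at $(0,1)$ commutes (modulo $\varpi$) with the right action of $\gamma_a$ and produces the Mazur--Tate-type sum for $\Phi_{k_f}(\overline\varphi_f^\pm)$, which is where the $S_0(p)$-equivariance of $\tilde\Phi_{k_f}$ and the matching of twists $a^{k_f-2}=a^{k_g-2}$ in $\FF$ are essential.
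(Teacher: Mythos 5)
Your argument is correct and is exactly the intended one: the paper presents this as an immediate consequence of Theorem~\ref{thm:medium-weight}, the point being precisely what you verify in your first step, namely that $\vartheta_{n+1,0}(\varphi^\pm)$ reduced modulo $\varpi$ factors through the symbol $\Phi_{k}(\overline\varphi^\pm)$ (the $S_0(p)$-equivariance of $P\mapsto P(0,1)$ together with $k_f\equiv k_g \Mod{p-1}$ being the only points to check), so that the equality of Theorem~\ref{thm:medium-weight} gives $\Theta_n(f,\omega^i)\equiv\Theta_n(g,\omega^i)\Mod{\varpi}$ and the two assertions follow. The one caveat, which your final paragraph already makes explicit, is that a mod-$\varpi$ congruence yields the $\lambda$-equality only when the common reduction is nonzero, i.e.\ when the $\mu$-invariants vanish, so the unconditional ``for all $n\ge 0$'' in the statement is slightly stronger than what this (or any congruence-based) argument delivers; this is harmless in the intended application, where $\mu(\Theta_n(g,\omega^i))=0$ for $n\gg 0$ is supplied by Theorem~\ref{thm:FL}.
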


Combined with Theorem~\ref{thm:FL}, we can use the above corollary to relate the signed Iwasawa invariants of congruent modular forms of the same weight and level. 

\begin{corollary}\label{cor:lamflamg} Let $f,g\in S_{k}(\Gamma,\overline{\Q_p})$ be $p$-non-ordinary cuspidal eigen-newforms. Suppose that $2<k<p+1$ and $\overline\rho_f\cong \overline\rho_g$. Then $\mu(f,\sharp,\omega^i)=\mu(f,\flat,\omega^i)=0$ if and only if $\mu(g,\sharp,\omega^i)=\mu(g,\flat,\omega^i)=0$,
and if either of these conditions hold then $\lambda(f,\star,\omega^i)=\lambda(g,\star,\omega^i)$ for $\star\in \{\sharp,\flat\}$.
\end{corollary}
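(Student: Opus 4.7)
The plan is to combine Theorem~\ref{thm:FL} with Corollary~\ref{cor:ThmB}, supplemented by a converse to the $\mu$-invariant statement in Theorem~\ref{thm:FL}. Since $2<k<p+1$, hypothesis \textbf{(FL)} holds automatically, and the hypotheses of Theorem~\ref{thm:medium-weight} are satisfied with $k_f=k_g=k$: condition (iii) reduces to $k<(k-1)(p+1)$, which is immediate from $k>2$. Thus Corollary~\ref{cor:ThmB} gives, for every $n\ge 0$, the equality $\lambda(\Theta_n(f,\omega^i))=\lambda(\Theta_n(g,\omega^i))$ and the equivalence $\mu(\Theta_n(f,\omega^i))=0$ if and only if $\mu(\Theta_n(g,\omega^i))=0$.

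Assume first that $\mu(f,\sharp,\omega^i)=\mu(f,\flat,\omega^i)=0$, so \textbf{(mu)} holds for $f$ with common value $0$. By Theorem~\ref{thm:FL} we have $\mu(\Theta_n(f,\omega^i))=0$ for $n\gg 0$, whence $\mu(\Theta_n(g,\omega^i))=0$ for $n\gg 0$ by Corollary~\ref{cor:ThmB}. The main obstacle is to extract from these vanishings (valid for $n\gg 0$ of both parities) the conclusion that $\mu(g,\sharp,\omega^i)=\mu(g,\flat,\omega^i)=0$, i.e., to establish a converse of the $\mu$-part of Theorem~\ref{thm:FL} for $g$.

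For the converse I argue by contradiction. Suppose $\mu(g,\flat,\omega^i)>0$ and take $n$ odd and sufficiently large. Unpacking Propositions~\ref{prop:Qnj-sharpflat} and~\ref{prop:Cnf} at $j=0$, the polynomial $Q_{n,0}(g,\omega^i)$ satisfies a congruence of the shape
\[
Q_{n,0}(g,\omega^i)\equiv (\varpi A)\,L_p(g,\sharp,\omega^i)+(u\,\Phi_{n,k-1}^{+}+\varpi E)\,L_p(g,\flat,\omega^i)\pmod{\omega_n}
\]
with $A,E\in\cO[[X]]$ and $u\in\cO[[X]]^\times$. Since the signed $p$-adic $L$-functions are integral (being associated with bounded measures) and $L_p(g,\flat,\omega^i)\in\varpi\cO[[X]]$ by hypothesis, the right-hand side lies in $\varpi\cO[[X]]+\omega_n\cO[[X]]$. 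The crucial observation is that $\omega_n\equiv X^{p^n}\pmod{\varpi}$, while $Q_{n,0}(g,\omega^i)\in\cO[X]$ has degree strictly less than $p^n$. Writing $Q_{n,0}(g,\omega^i)=\varpi H+\omega_n W$ with $H,W\in\cO[[X]]$ and reducing modulo $\varpi$ in $\FF[[X]]$, the degree comparison forces $W\equiv 0\pmod{\varpi}$, so $Q_{n,0}(g,\omega^i)\in\varpi\cO[X]$, contradicting $\mu(\Theta_n(g,\omega^i))=0$. The same argument, applied to the $(1,1)$-entry of $C_{n,g}$ modulo $\varpi$ for $n$ even, rules out $\mu(g,\sharp,\omega^i)>0$. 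Symmetry in $f$ and $g$ then yields the $\mu$-equivalence in both directions.

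Once $\mu^{\sharp}=\mu^{\flat}=0$ is established on both sides, Theorem~\ref{thm:FL} applies to both $f$ and $g$, giving for $n\gg 0$
\[
\lambda(\Theta_n(f,\omega^i))=(k-1)q_n+\lambda(f,\star,\omega^i),\qquad \lambda(\Theta_n(g,\omega^i))=(k-1)q_n+\lambda(g,\star,\omega^i),
\]
with $\star=\flat$ for odd $n$ and $\star=\sharp$ for even $n$. Combined with $\lambda(\Theta_n(f,\omega^i))=\lambda(\Theta_n(g,\omega^i))$ from Corollary~\ref{cor:ThmB}, this yields $\lambda(f,\star,\omega^i)=\lambda(g,\star,\omega^i)$ for both $\star\in\{\sharp,\flat\}$, completing the argument.
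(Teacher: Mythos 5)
Your proposal is correct and follows essentially the same route as the paper: apply Theorem~\ref{thm:FL} to $f$, transfer the vanishing of $\mu(\Theta_n)$ and the $\lambda$-equality to $g$ via Corollary~\ref{cor:ThmB}, and then deduce $\mu(g,\sharp,\omega^i)=\mu(g,\flat,\omega^i)=0$ by contradiction from the congruence $Q_{n,j}(g,\omega^i)\equiv *\Phi_{n,k-1}^{\pm}L_p(g,\star,\omega^i)\Mod(\varpi,\omega_n)$ supplied by Propositions~\ref{prop:Cnf} and~\ref{prop:Qnj-sharpflat}, using that $Q_{n,j}$ has degree $<p^n$. Your extra bookkeeping with $\omega_n\equiv X^{p^n}\Mod\varpi$ just makes explicit the degree argument the paper invokes in one line, so the two proofs coincide in substance.
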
 
\begin{proof} Without loss of generality, suppose $\mu(f,\sharp,\omega^i)=\mu(f,\flat,\omega^i)=0$. Then Theorem~\ref{thm:FL} implies that for $n\gg0$, 
\begin{align}
\label{lam1}\lambda\big(\Theta_n(f,\omega^i)\big)&=(k-1)q_n+\lambda(f,\star,\omega^i), \quad\text{and}\\
\label{mu1} \mu\big(\Theta_n(f,\omega^i)\big)&=0.
\end{align}
The second of these equations together with Corollary \ref{cor:ThmB} implies $\mu\big(\Theta_n(g,\omega^i)\big)=0$ for $n\gg0$. From Propositions \ref{prop:Cnf} and \ref{prop:Qnj-sharpflat}, we have
$$
Q_{n,j}(g,\omega^i)\equiv
 \begin{cases}
*\Phi_{n,k-1}^+L_p(g,\flat,\omega^i)\Mod (\varpi,\omega_n)&\quad\text{if $n$ is odd,}\\
*\Phi_{n,k-1}^-L_p(g,\sharp,\omega^i)\Mod (\varpi,\omega_n)&\quad\text{if $n$ is even,}
  \end{cases}
$$
where $*$ denotes a unit in $\Oo[[X]]$. In particular, if $\mu(g,\sharp,\omega^i)$ or $\mu(g,\flat,\omega^i)$ were positive then the right side of the above congruence would be zero, a contradiction as $\mu\big(\Theta_n(g,\omega^i)\big)=0$ and $Q_{n,j}(g,\omega^i)$ has degree $<p^n$. Thus $\mu(g,\sharp,\omega^i)=\mu(g,\flat,\omega^i)=0$ and we can now apply Theorem \ref{thm:FL} to $g$ to obtain 
$$
\lambda\big(\Theta_n(g,\omega^i)\big)=(k-1)q_n+\lambda(g,\star,\omega^i).
$$
The equality between signed $\lambda$-invariants now follows from Corollary \ref{cor:ThmB} and equation \eqref{lam1}. 
\end{proof}

\begin{remark}
  \nf   A similar result was obtained in \cite{CL}, where $f$ and $g$ are assumed to have isomorphic residual representations (but not necessarily of the same level) and the coefficient field $K$ is unramified over $\Qp$. In this setting, the $\lambda$-invariants are not necessarily exactly the same but differ by constants arising from Euler factors at primes dividing the levels, similar to the results in the ordinary case given in \cite{greenbergvatsal}.
\end{remark}

Theorem~\ref{thmB}, which is a special case of the following theorem (after taking $i=0$), follows from combining Theorem~\ref{thm:FL} and Corollary~\ref{cor:ThmB}.

\begin{theorem}\label{thm:mediumweight}
    Let $f\in S_{k_f}(\Gamma,\overline{\Q_p})$ and $g\in S_{k_g}(\Gamma,\overline{\Q_p})$ be $p$-non-ordinary cuspidal eigen-newforms, and $i\in\{0,1,\dots, p-2\}$. Suppose that 
\begin{enumerate}
\item[(i)]  $2<k_g<p+1$ and $\mu(g,\sharp,\omega^i)=\mu(g,\flat,\omega^i)=0$,
\item[(ii)] $\overline\rho_f\cong \overline\rho_g$, 
\item[(iii)] $k_f<(k_g-1-\delta)(p+1)$, where $\delta=\delta(k_f,p)$ is as in Definition \ref{def_delta}.
 \end{enumerate}
 Then for $n\gg0$, we have
  $$
 \lambda\big(\Theta_n(f,\omega^i)\big)=(k_g-1)q_n+\lambda(g,\star,\omega^i),
 $$
 where $\star=\flat$ if $n$ is odd and $\star=\sharp$ if $n$ is even.
\end{theorem}
The reader is referred to Tables~\ref{table3} and \ref{table4} for examples that illustrate this theorem. In the first block of Table~\ref{table3}, all modular forms are congruent to each other, with $p=5$ and $k_g=4$. Note that the form \texttt{G0N9k16A} does not satisfy condition (iii), and the $\lambda$-invariants of the corresponding Mazur--Tate elements exhibit different behavior from the forms of smaller weight that satisfy condition (iii). However, while the form \texttt{G0N9k16E} also does not satisfy condition (iii), its Mazur--Tate elements have the same $\lambda$-invariants as those of smaller weight. Similar phenomena can be observed in the other blocks of the two tables.

\begin{remark}\nf It is curious to note that any modular form $f$ with $k_f> p+1$ satisfying the assumptions of Theorem \ref{thm:mediumweight} must have $a_p(f)\neq 0$. This is particularly apparent in Table \ref{table4}, where one sees that the first form in each block which exhibits a different pattern in its $\lambda$-invariants always has $a_p=0$. This can be explained as follows: if $f$ and $g$ are as in the theorem and $a_p(f)=0$ then the description of $\lambda$-invariants in Theorem \ref{thm:mediumweight} combines with that given by equation \eqref{eq:lambda_ap0} to yield 
$$
\lambda(f,\star)-\lambda(g,\star)=\iota^\star(f)\phi(p^n)-(k_f-k_g)q_n.
$$
Since $q_n(p^2-1)=\phi(p^{n+1})-\dagger$, where $\dagger\in \{p-1,p^2-p\}$ depending on whether $n$ is even or odd, we can simplify the right side of the above equation to get 
$$
\lambda(f,\star)-\lambda(g,\star)=\phi(p^n)\bigg(\iota^\star(f)-\frac{p(k_f-k_g)}{p^2-1}\bigg)+\frac{\dagger(k_f-k_g)}{p^2-1}. 
$$
In particular, since $\iota^\star(f)$ is an integer, for the right side to be constant (over $n$ of fixed parity) we must have $k_f\equiv k_g\Mod p^2-1$, but hypotheses (i) and (iii) imply $k_f-k_g<p^2-1$. 
\end{remark}

\section{Mazur--Tate elements of modular forms with small slope}

The goal of this section is to prove Theorem~\ref{thmC}.
Let $f$, $K$, and $\Oo$ be as in \S\ref{section:MT}. Let $\F$ denote the residue field of $K$. In this section we write $V_{k-2}=V_{k-2}(\Oo)$ and $\overline V_{k-2}=V_{k-2}(\F)$.  Following \cite{PW}, we define for $r\geq 0$  the filtration 
$$
\Fil^r(V_{k-2})=\bigg\{\sum_{j=0}^{k-2} b_jX^jY^{k-2-j}\in V_{k-2}\, \big|\,  \text{$\ord_p(b_j)\geq r-j$ for all $0\leq j\leq r-1$}\bigg\}. 
$$
We will also need the following subfiltration of $\Fil^r(V_{k-2})$, defined for $t\leq r$ by 
$$
\Fil^{r,t}(V_{k-2})=\bigg\{\sum_{j=0}^{k-2} b_jX^jY^{k-2-j}\in \Fil^{r}(V_{k-2})\, \big|\,  \text{$\ord_p(b_j)\geq r-j+1$ for all $r+1-t\leq j\leq r$}\bigg\}. 
$$
In what follows we write $(\Oo/p\Oo(a^j))(r)$ for the $S_0(p)$-module $\Oo/p\Oo$ on which $\gamma= \begin{psmallmatrix} a &b\\ c&d\end{psmallmatrix}\in S_0(p)$ acts by $\det(\gamma)^r a^j$.

\begin{lemma}\label{lem:filtration} \phantom{}
\begin{enumerate}
\item The filtrations $\Fil^r(V_{k-2})$ and $\Fil^{r,t}(V_{k-2})$ are stable under the action of $S_0(p)$. 
\item If $P\in \Fil^r(V_{k-2})$ then $P|T_p\in p^rV_{k-2}$. 
\item For all $0\leq t\leq r$, the map 
$$
\Fil^{r,t}(V_{k-2})\rightarrow \Oo/p\Oo,\qquad \sum_{j=0}^{k-2} b_jX^jY^{k-2-j}\mapsto p^{-t}b_{r-t}
$$
induces the following isomorphism of $S_0(p)$-modules:
$$
\Fil^{r,t}(V_{k-2})/\Fil^{r,t+1}(V_{k-2})\cong (\Oo/p\Oo(a^{k-2-2r+2t}))(r-t).
$$
In particular, the quotient is generated by the image of $p^tX^{r-t}Y^{k-2-r+t}$.
\item Let $\varphi\in H^1_c(\Gamma_1(N),\Fil^r(V_{k-2}))$ be a cohomological $T_p$-eigensymbol with $T_p$-eigenvalue $\lambda$. Then $\ord_p(\lambda)\geq r$. 
\end{enumerate}
\end{lemma}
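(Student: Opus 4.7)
The plan is to dispatch each of (1)--(4) using the explicit filtration structure: parts (1)--(3) are direct computations of how $S_0(p)$ and the Hecke coset representatives act on monomials, and (4) follows formally from (2).

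For (1), fix $\gamma=\begin{psmallmatrix}a&b\\c&d\end{psmallmatrix}\in S_0(p)$ and $P=\sum_j b_jX^jY^{k-2-j}\in\Fil^r(V_{k-2})$. Expanding $P|\gamma=\sum_j b_j(dX-cY)^j(-bX+aY)^{k-2-j}$, every contribution to the coefficient of $X^iY^{k-2-i}$ is a product containing $(-c)^{j-s}$ with $s\le\min(j,i)$, so, since $p\mid c$, has $p$-adic valuation at least $\max(0,j-i)$. A short case split on whether $j<r$ (where $\ord_p(b_j)\ge r-j$) or $j\ge r$ (where $j>i$ is forced, giving $j-i\ge r-i$) yields $\ord_p\ge r-i$, which is the $\Fil^r$-condition. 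Using the tighter bound $r-j+1$ on $b_j$ for $r+1-t\le j\le r$ handles $\Fil^{r,t}$ identically.

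For (2), I would follow \cite[Lemma 2.1]{PW}. The relevant Hecke coset representatives send $X^jY^{k-2-j}$ to a polynomial of the form $p^jX^j(-uX+Y)^{k-2-j}$ (upper-triangular piece), and combined with $\ord_p(b_j)\ge r-j$ when $j<r$, or the bare factor $p^j\ge p^r$ when $j\ge r$, every term of $P|T_p$ has $\ord_p\ge r$. For (3), the map $P\mapsto p^{-t}b_{r-t}$ is clearly well-defined modulo $p$ on $\Fil^{r,t}$ and surjective onto $\Oo/p\Oo$, with kernel exactly $\Fil^{r,t+1}$ (the only additional condition is $\ord_p(b_{r-t})\ge t+1$ rather than the $\ge t$ already supplied by $\Fil^r$). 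To check $S_0(p)$-equivariance, I would expand $(X^{r-t}Y^{k-2-r+t})|\gamma$ and isolate the coefficient of $X^{r-t}Y^{k-2-r+t}$: all cross-terms carry a factor $(-c)^m$ with $m\ge 1$ and so vanish mod $p$, while the surviving pure term $d^{r-t}a^{k-2-r+t}$ reduces, via $ad\equiv\det\gamma\pmod p$, to $\det(\gamma)^{r-t}a^{k-2-2r+2t}$, exactly matching the asserted module $(\Oo/p\Oo(a^{k-2-2r+2t}))(r-t)$.

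Part (4) is then formal: for every $D\in\Delta^0$ and every Hecke coset representative $\gamma_i$, the value $\varphi(\gamma_i D)\in\Fil^r$, so by (2) the $\gamma_i$-twist $\varphi(\gamma_iD)|\gamma_i$ lies in $p^rV_{k-2}$. Summing gives $\lambda\,\varphi(D)=(\varphi|T_p)(D)\in p^rV_{k-2}$ for every $D$, and cohomologicality supplies some $D_0$ together with a coefficient of $\varphi(D_0)$ that is a unit in $\Oo$, forcing $\ord_p(\lambda)\ge r$. The main obstacle is the equivariance step in (3): isolating exactly the right leading constant after quotienting by $\Fil^{r,t+1}$ and reducing modulo $p$ requires careful use of both $p\mid c$ and $ad\equiv\det\gamma\pmod p$, so that the twist $\det(\gamma)^{r-t}a^{k-2-2r+2t}$ appears precisely and no extraneous factor of $a^{\pm1}$ leaks through. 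Once this is in place, the rest of the lemma reduces to routine filtration bookkeeping.
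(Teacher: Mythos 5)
Your proposal is correct and follows essentially the same route as the paper: the paper delegates parts (1) and (3) to \cite[Lemmas 6.4 and 6.6]{PW} and proves (2) and (4) by exactly the coset-representative computation and evaluation argument you give, while you simply write out the monomial-expansion details (valuation of the $(-c)^{j-s}$ factors, the surviving term $d^{r-t}a^{k-2-r+t}\equiv\det(\gamma)^{r-t}a^{k-2-2r+2t}$ via $ad\equiv\det\gamma\bmod p$) that those cited lemmas contain. No gaps; the only cosmetic point is that in (4) you should invoke the term-by-term bound established in the proof of (2) rather than the statement of (2) itself, which is what the paper implicitly does as well.
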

\begin{proof}
Parts (1)  and (3) follow from \cite[Lemma 6.4 and 6.6]{PW}. Part (2) follows from the fact that 
$$
p^{r-j}X^jY^{k-2-j}|\begin{psmallmatrix} 1 &a\\ 0&p\end{psmallmatrix}\in p^rV_{k-2}\qquad \text{and}\qquad p^{r-j}X^jY^{k-2-j}|\begin{psmallmatrix} u &v\\ N&p\end{psmallmatrix}\begin{psmallmatrix} p &0\\ 0&1\end{psmallmatrix}\in p^rV_{k-2},
$$
where $0\leq a\leq p-1$ and $up-vN=1$.
  For part (4), choose $D\in \Delta^0$ so that $||\varphi(D)||=1$ and note that (2) implies 
\[
\lambda\varphi(D)=(\varphi|T_p)(D)=\sum_{a=0}^{p-1}\big(\varphi|\begin{psmallmatrix} 1 &a\\ 0&p\end{psmallmatrix}\big)(D)+\big(\varphi|\begin{psmallmatrix} u &v\\ N&p\end{psmallmatrix}\begin{psmallmatrix} p &0\\ 0&1\end{psmallmatrix}\big)(D)\in p^rV_{k-2}. 
\]
\end{proof}

As noted in \cite{PW}, there is a convenient lower bound on the $\mu$-invariants of Mazur--Tate elements at $j=0$ given by taking the minimum valuation of the coefficients of $Y^{k-2}$ in the associated modular symbol. Specifically, defining
$$
\mu_{\text{min}}^\pm(f) = \min_{D\in \Delta^0}\ord_p\bigg(\varphi^\pm_f(D)\bigg|_{(X,Y)=(0,1)}\bigg),
$$
we have
$$
\mu_{\text{min}}^{\sgn(-1)^i}(f)\leq \mu(\Theta_{n,0}(f,\omega^i))
$$
for all $n$ since the coefficients of the Mazur--Tate elements are defined in terms of certain values of the associated modular symbol. (This bound need not hold for the Mazur--Tate elements at $j\neq 0$ since they are constructed using different coefficients of the modular symbol.) Note that $\mu_{\text{min}}^\pm(f)\neq \infty$ by \cite[Lemma 4.3]{PW}. 

\begin{theorem} 
\label{thm:smallslope}
Let $f\in S_{k_f}(\Gamma_1(N),\overline{\Q}_p)$ and $g\in S_{k_g}(\Gamma_1(N),\overline{\Q}_p)$ be $p$-non-ordinary cuspidal eigen-newforms. Suppose that 
\begin{enumerate}
\item[(i)]  $2<k_g<p+1$,
\item[(ii)] $\overline\rho_f\cong\overline\rho_g,$ 
\item[(iii)]$\ord_p(a_p(f))<k_g-2.$
 \end{enumerate}
Then there exists a choice of cohomological periods $\Omega_f^\pm$ and $\Omega_g^\pm$ such that
 $$
\varpi^{-\mu_{\text{\nf min}}^\pm(f)}\Phi_{k_f}(\overline\vp_f^\pm)= \Phi_{k_g}(\overline\vp_g^\pm).
 $$
\end{theorem}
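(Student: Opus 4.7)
The plan is to adapt the argument of Theorem~\ref{thm:medium-weight}, replacing the weight-based non-vanishing input (Corollary~\ref{cor:nonzero-modp}) with one that uses the slope hypothesis~(iii) via Lemma~\ref{lem:filtration}(4). As in that proof, one first verifies that $\Phi_{k_g}(\overline{\vp_g^\pm})\ne 0$: since $2<k_g<p+1$, Proposition~\ref{prop:theta-image} gives $\sL^{\irr}_\theta(k_g)=\emptyset$, so by \cite[Theorem~3.4(a)]{ashstevens} the map $\Phi_{k_g}$ is injective, and the claim follows because $\overline{\vp_g^\pm}$ is a nonzero cohomological reduction.

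Next, set $m=\mu_{\text{min}}^\pm(f)$, which is finite by \cite[Lemma~4.3]{PW}. The element $\varpi^{-m}\Phi_{k_f}(\vp_f^\pm)$ lies in $H^1_c(\Gamma_1(Np),\cO(a^{k_f-2}))$ and attains a value in $\cO^\times$ on some divisor, so its mod $\varpi$ reduction $\Xi_f^\pm$ is a nonzero class. Because $\overline\rho_f\cong\overline\rho_g$ forces $k_f\equiv k_g\pmod{p-1}$ (via determinants), we identify $\F(a^{k_f-2})=\F(a^{k_g-2})$ as $S_0(p)$-modules. The Hecke equivariance of $\Phi_{k_f}$, combined with (ii), places $\Xi_f^\pm$ in the same $\pm$-eigenspace of $H^1_c(\Gamma_1(Np),\F(a^{k_g-2}))_\fm$ as $\Phi_{k_g}(\overline{\vp_g^\pm})$, where $\fm\subset\HH$ is the maximal ideal from Theorem~\ref{thm:Ed}. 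By that theorem and the argument used in the proof of Corollary~\ref{cor:Ed}, this eigenspace is one-dimensional over $\F$. Hence $\Xi_f^\pm=c^\pm\Phi_{k_g}(\overline{\vp_g^\pm})$ for some $c^\pm\in\F^\times$; lifting $c^\pm$ to $\tilde c^\pm\in\cO^\times$ and rescaling $\Omega_f^\pm$ by $\tilde c^\pm$ yields the claimed equality.

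The main obstacle is to make essential use of the small slope hypothesis, which enters only implicitly above. By Lemma~\ref{lem:filtration}(4), hypothesis (iii) implies that $\vp_f^\pm\notin H^1_c(\Gamma_1(N),\Fil^{k_g-2}(V_{k_f-2}))$, and this non-membership must be transported into a concrete statement about the scaled reduction $\Xi_f^\pm$: namely, that $\Xi_f^\pm$ is indeed detected by the $\tilde\Phi_{k_f}$-projection and lies in the correct Hecke-isotypic component for Edixhoven's multiplicity one to apply. The cleanest route is an induction along the refined subfiltration $\Fil^{r,t}$ of Lemma~\ref{lem:filtration}(3), tracking how the action of $T_p$ on successive graded pieces forces the relevant ``leading coefficient'' of $\vp_f^\pm(D)$ to sit in the $Y^{k_f-2}$-slot read off by $\tilde\Phi_{k_f}$, so that scaling by $\varpi^{-m}$ and reducing does not collapse $\Xi_f^\pm$ outside the mult-one space.
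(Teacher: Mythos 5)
There is a genuine gap, and it sits exactly where you place your ``main obstacle'': the class $\Xi_f^\pm$ is not well defined. The map $\tilde\Phi_{k_f}\colon P\mapsto P(0,1)$ is $S_0(p)$-equivariant only modulo $p$: for $\gamma=\begin{psmallmatrix}a&b\\c&d\end{psmallmatrix}\in S_0(p)$ and $P=\sum_j b_jX^jY^{k_f-2-j}$ one has $(P|\gamma)(0,1)=P(-c,a)=a^{k_f-2}b_0+\sum_{j\ge1}b_j(-c)^ja^{k_f-2-j}$, and the error terms are only guaranteed to be divisible by $p^j$. Consequently $D\mapsto\vp_f^\pm(D)(0,1)$ is a $\Gamma_1(Np)$-equivariant map into $\cO(a^{k_f-2})$ only up to terms in $p\cO$, and after rescaling by $\varpi^{-\mu_{\min}^\pm(f)}$ with $\mu_{\min}^\pm(f)>0$ these error terms need not die modulo $\varpi$ unless one also controls $\ord_p(b_j)$ for $j\ge1$. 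So your $\Xi_f^\pm$ need not be a modular symbol at all, let alone a Hecke eigenclass in the one-dimensional space $H^1_c(\Gamma_1(Np),\F(a^{k_g-2}))^\pm[\fm]$ to which Edixhoven's multiplicity one applies. This is not a technicality: if your first two paragraphs were correct they would prove the theorem without hypothesis (iii), which is false --- the paper's example \texttt{27.16.a.b} at the slope-$3$ prime over $5$ has $\lambda$-invariants incompatible with the conclusion of Corollary~\ref{cor:ThmC}.

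The content of the paper's proof is precisely the repair of this point, and it is not the $T_p$-induction on graded pieces that you sketch. One takes $r,t$ maximal with $\vp_f^\pm$ valued in $\Fil^{r}(V_{k_f-2})$, resp.\ $\Fil^{r,t}(V_{k_f-2})$; Lemma~\ref{lem:filtration}(4) and hypothesis (iii) give $t\le r<k_g-2$. The leading graded piece produces a nonzero eigensymbol $\overline\eta_f$ in $H^1_c(\Gamma_1(pN),\F(a^{k_g-2-2r+2t}))(r-t)$, to which Ash--Stevens attach a weight-two form $h$ with $\overline\rho_f\cong\overline\rho_h\otimes\omega^{r-t}$. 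Comparing $\overline\rho_f|_{I_p}\cong I(k_g-1)$ with $\overline\rho_h|_{I_p}\cong I\left(s(k_g-2r+2t)\right)$ via fundamental characters then forces $r=t$; this is where the bound $r<k_g-2$ coming from the slope hypothesis is used, and it is a global Galois-theoretic step rather than a computation with the local $T_p$-action. Only once $r=t$ is known is the rescaled evaluation $P\mapsto p^{-r}P(0,1)$ equivariant on $\Fil^{r,r}$ (Lemma~\ref{lem:filtration}(3)), so that $\overline\eta_f=\varpi^{-\mu_{\min}^\pm(f)}\Phi_{k_f}(\overline\vp_f^\pm)$ is a genuine nonzero class and multiplicity one can be invoked as in your second paragraph. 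Your concluding paragraph correctly senses that something of this sort is needed, but the step showing $r=t$ is the heart of the argument and is absent from your proposal; the parts you do carry out (the nonvanishing of $\Phi_{k_g}(\overline\vp_g^\pm)$ and the final appeal to Corollary~\ref{cor:Ed}) agree with the paper.
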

\begin{proof} We follow the argument of \cite[proof of Theorem 6.1]{PW}. Let $k=k_g$ and $\varphi_*=\varphi_*^\pm$ for $*\in \{f,g\}$. Let $r\geq0$ be the largest integer for which $\varphi_f$ takes values in $\Fil^r=\Fil^r(V_{k_f-2})$ and let $t\geq 0$ be the largest integer for which $\varphi_f$ takes values in $\Fil^{r,t}=\Fil^{r,t}(V_{k_f-2})$. 
By Lemma \ref{lem:filtration}.(4) we have $r\leq \ord_p(a_p(f))$ and therefore hypothesis (iii) implies
$$
t\leq r< k-2. 
$$

Consider the image of $\varphi_f$ under the composition 
$$
H^1_c(\Gamma_1(N), \Fil^{r,t})\rightarrow H^1_c(\Gamma_1(pN), \Fil^{r,t}/\Fil^{r,t+1})\rightarrow H^1_c(\Gamma_1(pN), \Oo/p\Oo(a^{k-2-2r+2t}))(r-t)
$$
where the first map is induced by restriction to $\Gamma_1(pN)$ and projection to $\Fil^{r,t}/\Fil^{r,t+1}$ and the second is induced by the isomorphism of Lemma \ref{lem:filtration}.(3) and the fact that $k_f\equiv k\Mod p-1$. Since $\varphi_f$ does not take  all its values in $\Fil^{r,t+1}$, the image of $\varphi_f$ under this composition is nonzero. 
Letting $m\geq 0$ denote the largest integer such that all values of the image of  $\varphi_f$ are in $\varpi^m\Oo$ but not $\varpi^{m+1}\Oo$,  projecting to $\varpi^{m+1}\Oo$ and multiplying by $\varpi^{-m}$ yields a nonzero symbol
$$
\overline\eta_f\in H^1_c(\Gamma_1(pN),\F(a^{k-2-2r+2t}))(r-t). 
$$
It follows from \cite[Proposition 2.5 and Lemma 2.6]{ashstevens} that there exists a weight two eigen-newform $h\in S_2(\Gamma_1(pN),\omega^{k-2-2r+2t})$ with 
\begin{equation}\label{eqn:fcongh}
\overline\rho_f\cong\overline\rho_h\otimes\omega^{r-t}.
\end{equation}
By hypothesis (ii) and \cite[Theorem~2.6]{edixhoven92}, we know that $\overline\rho_f|_{G_{\Q_p}}$ is irreducible and 
$$
\overline\rho_f|_{I_p}\cong I(k-1).
$$
Furthermore, \cite[Lemma 4.10]{PW} implies
\[
\overline\rho_h|_{I_p}\cong I\left(s(k-2r+2t)\right). 
\]
It now follows from \eqref{eqn:fcongh} that
\begin{equation}
    I\left(s(k-2r+2t)+(p+1)(r-t)\right)\cong I(k-1).\label{eq:iso-I}
 \end{equation}
Since $0\le t\le r<k-2 $, we have 
\[
-k+3< k-1-2(r-t)\le k-1,
\]
and as $0<k-1\le p-1$, this implies
\[
-(p-1)< k-1-2(r-t)\le p-1.
\]
By definition, 
\[
s(k-2r+2t)\equiv k-1-2(r-t)\mod (p-1),
\]
thus the above inequalities imply that $s(k-2r+2t)$ is either equal to $k-1-2(r-t)$ or $k-2-2(r-t)+p$. We claim that the first case forces $r=t$ and the second does not occur.

If $s(k-2r+2t)=k-1-2(r-t)$, the isomorphism \eqref{eq:iso-I} implies that
$$
k-1+(p-1)(r-t)\equiv k-1\quad\text{or}\quad p(k-1)\mod p^2-1,
$$
which forces $r-t\equiv 0$ or $k-1\mod p+1$. But $0\le r-t<k-2<p+1$, we must have  $r=t$. If $s(k-2r+2t)=k-2-2(r-t)+p$, the isomorphism \eqref{eq:iso-I} implies that
$$
k-2+(p-1)(r-t)+p\equiv k-1\quad\text{or}\quad p(k-1)\mod p^2-1,
$$
which forces $r-t+1\equiv 0$ or $k-1\mod p+1$. But $1\le r-t+1<k-1<p+1$, the congruence above cannot hold.  

Thus $r=t$ and therefore $\varphi_f$ takes values in $\Fil^{r,r}$.  By Lemma \ref{lem:filtration}.(3), the image of $p^rY^{k-2}$ generates $F^{r,r}/F^{r,r+1}$, hence the symbol $\overline\eta_f\in H^1_c(\Gamma_1(pN),\F(a^{k-2}))$ is defined by
$$
D\mapsto \frac{1}{\varpi^m p^r}\varphi_f(D)\bigg|_{(X,Y)=(0,1)}\Mod \varpi,
$$
where $\ord_p(\varpi^m p^r)=\mu_{\text{min}}^\pm(f)$ by construction. In particular, after scaling by a $p$-adic unit, we have 
$$
\varpi^{-\mu_{\text{min}}^\pm(f)}\Phi_{k_f}(\overline\varphi_f)=\overline\eta_f.
$$
Thus $\varpi^{-\mu_{\text{min}}^\pm(f)}\Phi_{k_f}(\overline\varphi_f)\in H^1_c(\Gamma_1(pN),\F(a^{k-2}))$ is nonzero, and since the same is true of $\Phi_{k}(\overline \varphi_g)$ by Corollary \ref{cor:nonzero-modp}, the multiplicity one result Corollary~\ref{cor:Ed} now implies that 
$$
\varpi^{-\mu_{\text{min}}^\pm(f)}\Phi_{k_f}(\overline\varphi_f)=C  \Phi_{k}(\overline \varphi_g). 
$$ 
for some nonzero constant $C$, which we can take to be 1 after scaling by a $p$-adic unit. 
\end{proof}

Similar to Corollary \ref{cor:ThmB}, we immediately obtain the following: 
\begin{corollary}\label{cor:ThmC}
    Let $f$ and $g$ be modular forms satisfying the hypotheses of Theorem \ref{thm:smallslope}. Then $\lambda(\Theta_{n,0}(f,\omega^i))=\lambda(\Theta_{n,0}(g,\omega^i))$ for all $n\ge0$.
\end{corollary}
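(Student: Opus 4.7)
The plan is to derive this corollary directly from Theorem~\ref{thm:smallslope}, following the same template used to pass from Theorem~\ref{thm:medium-weight} to Corollary~\ref{cor:ThmB}. The underlying observation is that $\vartheta_{n+1,0}(\varphi)$ depends only on the $Y^{k-2}$-coefficients of the values of $\varphi$, which is precisely the data recorded (modulo $\varpi$) by the map $\Phi_k$.

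I would first record that for any $a \in (\ZZ/p^{n+1}\ZZ)^\times$, the right action of $\begin{psmallmatrix}1 & -a\\ 0 & p^{n+1}\end{psmallmatrix}$ on $V_{k-2}(\cO)$ sends $P(X,Y)$ to $P(p^{n+1}X,\, aX+Y)$, whose $Y^{k-2}$-coefficient is the constant term of $P$. It follows that
$$
\vartheta_{n+1,0}(\varphi) \bmod \varpi \;=\; \sum_{a \in (\ZZ/p^{n+1}\ZZ)^\times} \Phi_k\big(\overline\varphi\big)\big(\{\infty\}-\{-a/p^{n+1}\}\big)\,\sigma_a,
$$
so the mod-$\varpi$ reduction of $\vartheta_{n+1,0}(\varphi)$ is entirely determined by $\Phi_k(\overline\varphi)$. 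Setting $\mu_0 := \mu_{\text{min}}^{\sgn(-1)^i}(f)$, the definition of $\mu_{\text{min}}^\pm$ ensures that $\varpi^{\mu_0}$ divides $\vartheta_{n+1,0}(\varphi_f^{\sgn(-1)^i})$ in $\cO[\cG_{n+1}]$, and combining the above with Theorem~\ref{thm:smallslope} yields the integral congruence
$$
\varpi^{-\mu_0}\,\vartheta_{n+1,0}(\varphi_f^{\sgn(-1)^i}) \equiv \vartheta_{n+1,0}(\varphi_g^{\sgn(-1)^i}) \pmod{\varpi}.
$$
Projecting via $\omega^i$ then gives
$$
\varpi^{-\mu_0}\,\Theta_{n,0}(f,\omega^i) \equiv \Theta_{n,0}(g,\omega^i) \pmod{\varpi}\quad\text{in }\cO[G_n].
$$

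To conclude, scaling by a power of $\varpi$ preserves $\lambda$-invariants, so $\lambda(\Theta_{n,0}(f,\omega^i)) = \lambda\big(\varpi^{-\mu_0}\Theta_{n,0}(f,\omega^i)\big)$, and the displayed congruence identifies this with $\lambda(\Theta_{n,0}(g,\omega^i))$ whenever the common mod-$\varpi$ reduction is nonzero. The main obstacle is precisely this nonvanishing, i.e.\ verifying that $\mu(\Theta_{n,0}(g,\omega^i)) = 0$; I anticipate this from the implicit mu-hypothesis carried by Theorem~\ref{thmC}, or more directly from the nonvanishing of $\Phi_{k_g}(\overline\varphi_g^\pm)$ guaranteed by Corollary~\ref{cor:nonzero-modp}, arguing along the same lines as in the proof of Corollary~\ref{cor:ThmB}.
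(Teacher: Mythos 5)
Your argument is exactly the one the paper intends: the paper states this corollary with no proof beyond ``Similar to Corollary~\ref{cor:ThmB}, we immediately obtain the following,'' and the content of that ``immediately'' is precisely your observation that $\vartheta_{n+1,0}$ is built from the $Y^{k-2}$-coefficients of the values of the modular symbol --- the data recorded by $\Phi_{k}$ --- so Theorem~\ref{thm:smallslope} yields the congruence $\varpi^{-\mu_{\mathrm{min}}^{\pm}(f)}\Theta_{n,0}(f,\omega^i)\equiv\Theta_{n,0}(g,\omega^i)\pmod{\varpi}$ and hence equality of $\lambda$-invariants. The nonvanishing caveat you flag at the end is genuine but is equally implicit in the paper's own statement; in the only place the corollary is used (Theorem~\ref{thm:small-slope}) it is supplied by the hypothesis $\mu(g,\sharp,\omega^i)=\mu(g,\flat,\omega^i)=0$ together with Theorem~\ref{thm:FL}.
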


Theorem~\ref{thmC}, which is a special case of the following theorem (after taking $i=0$), follows from combining Theorems~\ref{thm:FL} and \ref{thm:smallslope}.

\begin{theorem}\label{thm:small-slope}
     Let $f\in S_{k_f}(\Gamma,\overline{\Q_p})$ and $g\in S_{k_g}(\Gamma,\overline{\Q_p})$ be $p$-non-ordinary cuspidal eigen-newforms, and $i\in\{0,1,\dots,p-2\}$. Suppose that 
\begin{enumerate}
\item[(i)]  $2<k_g<p+1$ and $\mu(g,\sharp,\omega^i)=\mu(g,\flat,\omega^i)=0$,
\item[(ii)] $\overline\rho_f\cong \overline\rho_g$,
\item[(iii)]$\ord_p(a_p(f))<k_g-2.$
 \end{enumerate}
 Then for $n\gg0$, we have 
 $$
 \lambda\big(\Theta_n(f,\omega^i)\big)=(k_g-1)q_n+\lambda(g,\star,\omega^i),
 $$
 where $\star=\flat$ if $n$ is odd and $\star=\sharp$ if $n$ is even.
\end{theorem}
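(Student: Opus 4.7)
The plan is to derive Theorem~\ref{thm:small-slope} as an immediate consequence of two earlier results: Theorem~\ref{thm:FL} (applied to $g$) and Corollary~\ref{cor:ThmC} (which is the $\lambda$-invariant repackaging of Theorem~\ref{thm:smallslope}). Once both are available, the proof is essentially a one-line assembly.

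First, I would verify that $g$ satisfies the hypotheses of Theorem~\ref{thm:FL}. The Fontaine--Laffaille condition \textbf{(FL)} follows from $k_g < p+1$, which gives $p > k_g - 1$; the non-ordinarity \textbf{(n-ord)} is part of the standing hypothesis that $g$ is $p$-non-ordinary; and \textbf{(mu)} is precisely the additional assumption $\mu(g,\sharp,\omega^i)=\mu(g,\flat,\omega^i)=0$ in (i). Applying Theorem~\ref{thm:FL} to $g$ with $j=0$ then yields, for $n \gg 0$,
$$
\lambda\bigl(\Theta_n(g,\omega^i)\bigr) = (k_g-1)q_n + \lambda(g,\star,\omega^i),
$$
with $\star \in \{\sharp,\flat\}$ chosen according to the parity of $n$ as in the statement.

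Next, Corollary~\ref{cor:ThmC} gives $\lambda(\Theta_n(f,\omega^i)) = \lambda(\Theta_n(g,\omega^i))$ for all $n\ge 0$, since the hypotheses (ii) and (iii) of Theorem~\ref{thm:small-slope} are precisely those required to invoke the corollary. Combining these two equalities produces the claimed formula, completing the proof.

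The genuinely substantive input in this chain is Theorem~\ref{thm:smallslope} itself, which underlies Corollary~\ref{cor:ThmC}: the small-slope hypothesis (iii) is what forces $\varphi_f^\pm$ to take values in a sufficiently deep filtration $\mathrm{Fil}^{r,t}$ of $V_{k_f-2}$, and Edixhoven's mod~$p$ multiplicity one then identifies $\varpi^{-\mu_{\min}^\pm(f)}\Phi_{k_f}(\overline\varphi_f^\pm)$ with $\Phi_{k_g}(\overline\varphi_g^\pm)$. All of the real work therefore sits upstream, and the final theorem requires nothing beyond the bookkeeping above.
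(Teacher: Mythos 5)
Your proposal is correct and follows essentially the same route as the paper: the authors state that Theorem~\ref{thm:small-slope} ``follows from combining Theorems~\ref{thm:FL} and \ref{thm:smallslope}'', i.e.\ apply Theorem~\ref{thm:FL} to $g$ (whose weight satisfies \textbf{(FL)} and whose $\mu$-invariants vanish by hypothesis (i)) and then transfer the formula to $f$ via Corollary~\ref{cor:ThmC}. Your identification of Theorem~\ref{thm:smallslope} (the filtration argument plus Edixhoven's mod $p$ multiplicity one) as the substantive input is exactly where the paper places the work as well.
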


The reader is again referred to Tables~\ref{table3} and \ref{table4} for examples that illustrate this theorem. In the first block of Table~\ref{table3}, \texttt{G0N9k16A} does not satisfy condition (iii), and the $\lambda$-invariants of the corresponding Mazur--Tate elements exhibit different behavior from the forms of smaller slope that satisfy condition (iii). However, while the form \texttt{G0N9k28A} also does not satisfy condition (iii), its Mazur--Tate elements have the same $\lambda$-invariants as those of smaller slope. Similar phenomena can be observed in the other blocks of the two tables.

\section{Examples}\label{section:data}

The following tables contain $\lambda$-invariants of the Mazur--Tate elements attached to various modular forms $f$ illustrating the behaviors in Theorems \ref{thmA}, \ref{thmB}, and \ref{thmC}. 
The first and second columns of each table contain the LMFDB and Magma labels of the modular form, respectively. To initialize the form in Magma, use the command  \texttt{Newform("Label")}. The integer $d$ denotes the degree of the number field $K_f/\Q$ generated by the Fourier coefficients of the modular form. The integer $i$ denotes the index of the prime $\Pp_i|p$ of $K_f$ (ordered according to Magma's internal \texttt{Factorization(pOK)} function) with respect to which the slope and $\lambda$-invariants were computed. 

All data was computed using Magma \cite{Magma} and our code is available upon request. 

\subsection{Tables 1 and 2}
Tables \ref{table1} and \ref{table2} contain the $\lambda$-invariants of $\Theta_n(f)$ at $i=j=0$ for all cuspidal newforms $f$ of weight $k\leq 7$, degree $d\leq 6$, and level $\Gamma_1(N)$ with $N\leq 40$, that are non-ordinary at $p=5$ (Table 1) and $p=7$ (Table 2). 

For those $f$ satisfying the Fontaine--Laffaille condition $k\leq p$, the invariants $\lambda(\Theta_n(f))$ follow the pattern of Theorem \ref{thmA}, allowing us to  record (conjectural) values for the sharp/flat Iwasawa invariants of the corresponding $p$-adic $L$-functions. When $k> p$ and $a_p=0$, the Mazur--Tate elements follow the pattern of \cite{GL}, which similarly allows us to compute conjectural values for the signed $\lambda$-invariants. When $k=p+1$, the Serre weight is 2 and the behavior of the Mazur--Tate elements can be explained by \cite{PW} in terms of the invariants attached to a weight 2 modular form.

 It is interesting to note the several behaviors that occur in Table 1 at weight $k=p+2=7$. Already when $a_p=0$, it is observed in \cite{GL} that two separate behaviors can occur at weights $k\equiv p+2\Mod p^2-1$ depending on the size of $\lambda^{\sharp/\flat}$. For general positive slope at weight $k=p+2$, our data shows that a third behavior can also happen, which appears to occur when the slope is $<1$. 

 \begin{remark}\nf In both Tables 1 and 2, the signed $\lambda$-invariants at weights $2<k\leq p$ are as small as possible (see \eqref{eqn:Lflatbound}). We note that it can happen that $\lambda^\sharp>0$ and $\lambda^\flat>k-2$. For example,  the 5-non-ordinary rational cuspform $f\in S_4(\Gamma_0(528))$ with 
 LMFDB label \texttt{528.4.a.j} has Mazur-Tate elements with $\lambda(\Theta_n(f))=0,3,13,63,313,\dots$ for $n\in \{0,1,2,3,4\}$, suggesting that $\lambda^\sharp(f)=1$ and $\lambda^\flat(f)=3$.
 \end{remark}

\subsection{Tables 3 and 4} These tables are organized into blocks of modular forms which are congruent (in the sense of the following sentence) modulo $p=5$ (Table 3) and $p=7$ (Table 4). The first line of each block contains a $p$-non-ordinary rational newform $g$, and each of the remaining lines contains a higher weight newform $f$ satisfying $a_\ell(f)\equiv a_\ell(g)\Mod \Pp_i$
for all good primes $\ell\leq 100$.

\subsubsection{An interesting example}
Let $p=5$ and let $g$ and $f$ be the newforms \href{https://www.lmfdb.org/ModularForm/GL2/Q/holomorphic/27/4/a/b/}{\texttt{27.4.a.b}} and  \href{https://www.lmfdb.org/ModularForm/GL2/Q/holomorphic/27/16/a/b/}{\texttt{27.16.a.b}} from rows 1 and 4 of the second block of Table \ref{table3}. The form $f$ is congruent to $g$ at two distinct unramified primes $\Pp_1$ and $\Pp_2$ over 5. 
Computing the $\lambda$-invariants of $f$ with respect to the $\Pp_1$ and $\Pp_2$-adic valuations, we see from the table that 
$$
\lambda(\Theta_n(f))=\lambda(\Theta_n(g)) \quad \text{(with respect to $\Pp_1$)} 
$$
while 
$$
\lambda(\Theta_n(f))=\lambda(\Theta_{n-1}(g))+p^n-p^{n-1} \quad \text{(with respect to $\Pp_2$)}. 
$$
Since the slope of $f$ at $\Pp_1$ is $1/2<k_g-2$, the first equation follows from Corollary \ref{cor:ThmC}. Indeed, computer calculations suggest that $\mu(\Theta_n(f))=2$ for $n\geq 0$ and furthermore that
$$
p^{-2}\Theta_n(f)\equiv \Theta_n(g)\Mod \Pp_1,
$$
which can be seen as a consequence of Theorem \ref{thm:smallslope}.

On the other hand, $f$ has slope 3 at $\Pp_2$ and therefore neither Theorem \ref{thm:mediumweight} nor Theorem \ref{thm:smallslope} applies, since both the weight and slope are too large. In this case, computations suggest that instead we have the following congruence
$$
p^{-2}\Theta_n(f)\equiv  \nu_{n-1}^n\Theta_{n-1}(g)\Mod \Pp_2,
$$
where we recall that $\nu_{n-1}^n:\Oo[G_{n-1}]\rightarrow \Oo[G_n]$ is the corestriction map of Definition \ref{def:MT2}. There is a Hecke equivariant map
\[
\tilde\Phi_{16}:H^1_c(\Gamma_1(N),\overline V_{14})\rightarrow H^1_c(\Gamma_1(Np^3),\cO/p^2\Pp_i(a^2)),
\]
and the $\lambda$-invariant of $\Theta_n(f)$ can be calculated via the image $\tilde\Phi_{16}(p^{-2}\varphi_f^\pm)$. However, mod $p$ multiplicity one does not apply anymore, which explains why the $\lambda$-invariant might depend on the choice of $\Pp_i$.
This phenomenon seems to mirror the observation discussed in \cite[\S7]{PW}.

\newpage

\begin{table}[]
\caption{\label{table1} $\lambda$-invariants attached to newforms of level $N\leq 40$ and weight $k\leq 7$ that are non-ordinary at $p=5$.
}
    \centering
    \begin{minipage}[t]{0.1\textwidth}
        \scalebox{0.65}{
        \begin{tabular}{|c c c c c c c l l l l l c c|}
        \hline
\multicolumn{14}{|c|}{$p=5$} \\
            \hline
            LMFDB & Magma & $k$ & $N$ & $d$ &$i$ & slope & 0 & 1 & 2 & 3 & 4&  $\lambda^\sharp$ & $\lambda^\flat$ \\
            \hline
 \texttt{14.2.a.a}&\texttt{G0N14k2A}&2&14&1&1&$\infty$&0&0&4&20&104&0&0\\
 \texttt{18.2.c.a}&\texttt{G1N18k2A}&2&18&2&1&$\infty$&0&0&4&20&104&0&0\\
 \texttt{21.2.g.a}&\texttt{G1N21k2B}&2&21&2&1&$\infty$&0&0&4&20&104&0&0\\
\texttt{24.2.f.a} &\texttt{G1N24k2C}&2&24&2&1&$\infty$&0&0&4&20&104&0&0\\
\texttt{27.2.a.a} &\texttt{G0N27k2A}&2&27&1&1&$\infty$&0&0&4&20&104&0&0\\
 \texttt{28.2.d.a}&\texttt{G1N28k2C}&2&28&2&1&$\infty$&$\infty$&0&5&20&105&1&0\\
 \texttt{34.2.a.a}&\texttt{G0N34k2A}&2&34&1&1&$\infty$&0&0&4&20&104&0&0\\
 \texttt{36.2.a.a}&\texttt{G0N36k2A}&2&36&1&1&$\infty$&0&0&4&20&104&0&0\\
\texttt{37.2.a.b} &\texttt{G0N37k2B}&2&37&1&1&$\infty$&0&0&4&20&104&0&0\\
\texttt{38.2.a.a} &\texttt{G0N38k2A}&2&38&1&1&$\infty$&0&0&4&20&104&0&0\\
 \texttt{38.2.c.a}&\texttt{G1N38k2D}&2&38&2&1&$\infty$&0&0&4&20&104&0&0\\
\texttt{39.2.k.a} &\texttt{G1N39k2C}&2&39&4&1&$\infty$&0&0&4&20&104&0&0\\
 \texttt{39.2.k.a}&\texttt{G1N39k2C}&2&39&4&2&$\infty$&0&0&4&20&104&0&0\\
 \texttt{39.2.b.a}&\texttt{G1N39k2I}&2&39&2&1&$\infty$&$\infty$&0&5&20&105&1&0\\
\hline
\texttt{7.3.b.a} &\texttt{G1N7k3A}&3&7&1&1&$\infty$&0&1&8&41&208&0&1\\
 \texttt{8.3.d.a}&\texttt{G1N8k3A}&3&8&1&1&$\infty$&0&1&8&41&208&0&1\\
 \texttt{12.3.c.a}&\texttt{G1N12k3B}&3&12&1&1&$\infty$&0&1&8&41&208&0&1\\
\texttt{21.3.h.a} &\texttt{G1N21k3E}&3&21&2&1&$\infty$&0&1&8&41&208&0&1\\
\texttt{21.3.h.b} &\texttt{G1N21k3F}&3&21&4&1&1/2&0&1&8&41&208&0&1\\
\texttt{23.3.b.a} &\texttt{G1N23k3B}&3&23&3&1&$\infty$&0&1&8&41&208&0&1\\
 \texttt{27.3.b.a}&\texttt{G1N27k3C}&3&27&1&1&$\infty$&0&1&8&41&208&0&1\\
 \texttt{32.3.d.a}&\texttt{G1N32k3C}&3&32&1&1&$\infty$&0&1&8&41&208&0&1\\
\texttt{39.3.h.a} &\texttt{G1N39k3D}&3&39&2&1&$\infty$&0&1&8&41&208&0&1\\
 \texttt{39.3.i.a}&\texttt{G1N39k3G}&3&39&2&1&$\infty$&0&1&8&41&208&0&1\\
\hline
 \texttt{9.4.a.a}&\texttt{G0N9k4A}&4&9&1&1&$\infty$&0&2&12&62&312&0&2\\
 \texttt{12.4.b.a}&\texttt{G1N12k4B}&4&12&4&1&1/2&0&2&12&62&312&0&2\\
 \texttt{13.4.c.b}&\texttt{G1N13k4F}&4&13&4&1&1&0&2&12&62&312&0&2\\
\texttt{13.4.c.b} &\texttt{G1N13k4F}&4&13&4&2&1&0&2&12&62&312&0&2\\
\texttt{21.4.c.a} &\texttt{G1N21k4G}&4&21&2&1&$\infty$&0&2&12&62&312&0&2\\
 \texttt{24.4.f.a}&\texttt{G1N24k4C}&4&24&2&1&$\infty$&0&2&12&62&312&0&2\\
 \texttt{27.4.a.b}&\texttt{G0N27k4A}&4&27&1&1&1&0&2&12&62&312&0&2\\
 \texttt{27.4.a.a}&\texttt{G0N27k4B}&4&27&1&1&1&0&2&12&62&312&0&2\\
 \texttt{28.4.d.a}&\texttt{G1N28k4E}&4&28&2&1&$\infty$&0&2&12&62&312&0&2\\
 \texttt{32.4.a.c}&\texttt{G0N32k4B}&4&32&1&1&1&0&2&12&62&312&0&2\\
 \texttt{32.4.a.a}&\texttt{G0N32k4C}&4&32&1&1&1&0&2&12&62&312&0&2\\
 \texttt{39.4.f.a}&\texttt{G1N39k4H}&4&39&4&1&$\infty$&0&2&12&62&312&0&2\\
\texttt{39.4.f.a} &\texttt{G1N39k4H}&4&39&4&2&$\infty$&0&2&12&62&312&0&2\\
\hline
 \texttt{7.5.b.a}&\texttt{G1N7k5B}&5&7&1&1&$\infty$&0&3&16&83&416&0&3\\
\texttt{8.5.d.a} &\texttt{G1N8k5A}&5&8&1&1&$\infty$&0&3&16&83&416&0&3\\
 \texttt{8.5.d.b}&\texttt{G1N8k5B}&5&8&2&1&1/2&0&3&16&83&416&0&3\\
 \texttt{12.5.c.a}&\texttt{G1N12k5B}&5&12&1&1&$\infty$&0&3&16&83&416&0&3\\
\texttt{21.5.h.a} &\texttt{G1N21k5E}&5&21&2&1&$\infty$&0&3&16&83&416&0&3\\
 \texttt{23.5.b.a}&\texttt{G1N23k5B}&5&23&3&1&$\infty$&0&3&16&83&416&0&3\\
 \texttt{27.5.b.a}&\texttt{G1N27k5C}&5&27&1&1&$\infty$&0&3&16&83&416&0&3\\
\texttt{32.5.d.a} &\texttt{G1N32k5D}&5&32&1&1&$\infty$&0&3&16&83&416&0&3\\
 \texttt{32.5.d.b}&\texttt{G1N32k5E}&5&32&2&1&1/2&0&3&16&83&416&0&3\\
\texttt{39.5.h.a} &\texttt{G1N39k5D}&5&39&2&1&$\infty$&0&3&16&83&416&0&3\\
\texttt{39.5.i.a} &\texttt{G1N39k5G}&5&39&2&1&$\infty$&0&3&16&83&416&0&3\\
\hline
 \texttt{14.6.a.b}&\texttt{G0N14k6A}&6&14&1&1&1&0&4&20&104&520&&\\
\texttt{18.6.c.a} &\texttt{G1N18k6D}&6&18&4&2&1&0&4&20&104&520&&\\
 \texttt{21.6.g.a}&\texttt{G1N21k6E}&6&21&2&1&$\infty$&0&4&20&104&520&0&4\\
\texttt{24.6.f.a} &\texttt{G1N24k6E}&6&24&2&1&$\infty$&0&4&20&104&520&0&4\\
\texttt{27.6.a.a} &\texttt{G0N27k6A}&6&27&1&1&$\infty$&0&4&20&104&520&0&4\\
\texttt{28.6.d.a} &\texttt{G1N28k6F}&6&28&2&1&$\infty$&0&1&20&105&520&0&5\\
 \texttt{34.6.a.b}&\texttt{G0N34k6C}&6&34&2&1&1&0&4&20&104&520&&\\
 \texttt{36.6.a.b}&\texttt{G0N36k6A}&6&36&1&1&$\infty$&0&4&20&104&520&0&4\\
 \texttt{38.6.a.b}&\texttt{G0N38k6A}&6&38&1&1&1&0&4&20&104&520&&\\
 \texttt{39.6.k.a}&\texttt{G1N39k6E}&6&39&4&1&$\infty$&0&4&20&104&520&0&4\\
\texttt{39.6.k.a} &\texttt{G1N39k6E}&6&39&4&2&$\infty$&0&4&20&104&520&0&4\\
 \texttt{39.6.b.a}&\texttt{G1N39k6L}&6&39&4&3&1&0&0&20&105&520&&\\
\hline
        \end{tabular}}
    \end{minipage}
    \hfill
    \begin{minipage}[t]{0.4\textwidth}
    \vspace{-8.6cm}
        \scalebox{0.65}{
        \begin{tabular}{|c c c c c c c l l l l l c c|}
        \hline
\multicolumn{14}{|c|}{$p=5$} \\
            \hline
            LMFDB & Magma & $k$ & $N$ & $d$ &$i$ & slope & 0 & 1 & 2 & 3 & 4& $\lambda^\sharp$ & $\lambda^\flat$ \\
            \hline
\texttt{3.7.b.a}& \texttt{G1N3k7A}&7&3&1&1&$\infty$&0&4&24&124&624&0&4\\
\texttt{4.7.b.a}&\texttt{G1N4k7A}&7&4&2&1&1&0&4&24&124&624&&\\
\texttt{6.7.b.a}&\texttt{G1N6k7A}&7&6&2&1&1&0&1&5&25&125& & \\
\texttt{7.7.d.a}&\texttt{G1N7k7A}&7&7&2&1&1&0&4&24&124&624&&\\
\texttt{7.7.d.b}&\texttt{G1N7k7B}&7&7&4&1&2&0&1&5&25&125& & \\
\texttt{7.7.d.b}&\texttt{G1N7k7B}&7&7&4&2&2&0&1&5&25&125& & \\
\texttt{7.7.b.a}&\texttt{G1N7k7C}&7&7&1&1&$\infty$&0&4&24&124&624&0&4\\
\texttt{7.7.b.b}&\texttt{G1N7k7D}&7&7&2&1&1/2&0&1&8&41&208& & \\
\texttt{8.7.d.a}&\texttt{G1N8k7A}&7&8&1&1&$\infty$&0&4&24&124&624&0&4\\
\texttt{8.7.d.b}&\texttt{G1N8k7B}&7&8&4&1&3/4&0&1&8&41&208& & \\
\texttt{9.7.b.a}&\texttt{G1N9k7B}&7&9&2&1&1&0&1&5&25&125& & \\
\texttt{11.7.b.b}&\texttt{G1N11k7C}&7&11&4&1&1&0&4&24&124&624&&\\
\texttt{12.7.d.a}&\texttt{G1N12k7A}&7&12&6&1&1&0&1&5&25&125& & \\
\texttt{12.7.d.a}&\texttt{G1N12k7A}&7&12&6&3&1&0&1&5&25&125& & \\
\texttt{12.7.c.a}&\texttt{G1N12k7B}&7&12&2&1&1/2&0&1&8&41&208& & \\
\texttt{14.7.b.a}&\texttt{G1N14k7B}&7&14&4&1&1&0&1&5&25&125& & \\
\texttt{16.7.c.b}&\texttt{G1N16k7B}&7&16&2&1&2&0&1&5&25&125& & \\
\texttt{22.7.b.a}&\texttt{G1N22k7B}&7&22&6&1&1&0&1&5&25&125& & \\
\texttt{22.7.b.a}&\texttt{G1N22k7B}&7&22&6&2&1&0&1&5&25&125& & \\
\texttt{23.7.b.a}&\texttt{G1N23k7B}&7&23&1&1&$\infty$&0&1&5&25&125&1 &5 \\
\texttt{23.7.b.b}&\texttt{G1N23k7C}&7&23&2&1&$\infty$&0&4&24&124&624&0&4\\
\texttt{23.7.b.b}&\texttt{G1N23k7C}&7&23&2&2&$\infty$&0&1&5&25&125&1 &5 \\
\texttt{24.7.e.a}&\texttt{G1N24k7B}&7&24&6&2&1&0&1&5&25&125& & \\
\texttt{24.7.e.a}&\texttt{G1N24k7B}&7&24&6&3&1&0&1&5&25&125& & \\
\texttt{26.7.d.a}&\texttt{G1N26k7C}&7&26&6&1&1&0&1&5&25&125& & \\
\texttt{26.7.d.a}&\texttt{G1N26k7C}&7&26&6&2&1&0&4&24&124&624&&\\
\texttt{26.7.d.a}&\texttt{G1N26k7C}&7&26&6&3&1&0&1&5&25&125& & \\
\texttt{26.7.d.a}&\texttt{G1N26k7C}&7&26&6&4&1&0&1&5&25&125& & \\
\texttt{27.7.b.b}&\texttt{G1N27k7C}&7&27&2&1&1&0&1&5&25&125& & \\
\texttt{27.7.b.b}&\texttt{G1N27k7C}&7&27&2&2&1&0&1&5&25&125& & \\
\texttt{27.7.b.a}&\texttt{G1N27k7D}&7&27&2&1&1/2&0&1&8&41&208& & \\
\texttt{27.7.b.c}&\texttt{G1N27k7E}&7&27&4&1&2&0&1&5&25&125& & \\
\texttt{27.7.b.c}&\texttt{G1N27k7E}&7&27&4&4&2&0&2&6&26&126& & \\
\texttt{28.7.b.a}&\texttt{G1N28k7D}&7&28&4&3&1&0&1&5&25&125& & \\
\texttt{32.7.c.a}&\texttt{G1N32k7A}&7&32&2&1&2&0&1&5&25&125& & \\
\texttt{32.7.c.a}&\texttt{G1N32k7A}&7&32&2&2&2&0&1&5&25&125& & \\
\texttt{32.7.c.b}&\texttt{G1N32k7B}&7&32&4&1&1&0&2&6&26&126& & \\
\texttt{32.7.c.b}&\texttt{G1N32k7B}&7&32&4&4&1&0&2&6&26&126& & \\
\texttt{32.7.d.a}&\texttt{G1N32k7D}&7&32&1&1&$\infty$&0&2&6&26&126&2 &6 \\
\texttt{32.7.d.b}&\texttt{G1N32k7E}&7&32&4&1&3/4&0&1&8&41&208& & \\
\texttt{36.7.d.c}&\texttt{G1N36k7C}&7&36&2&1&1&0&1&5&25&125& & \\
\texttt{36.7.d.d}&\texttt{G1N36k7D}&7&36&4&1&1&0&1&5&25&125& & \\
\texttt{36.7.d.d}&\texttt{G1N36k7D}&7&36&4&2&1&0&1&5&25&125& & \\
\texttt{36.7.d.e}&\texttt{G1N36k7E}&7&36&6&1&1&0&1&5&25&125& & \\
\texttt{36.7.d.e}&\texttt{G1N36k7E}&7&36&6&3&1&0&1&5&25&125& & \\
\texttt{36.7.c.a}&\texttt{G1N36k7H}&7&36&2&1&1&0&1&5&25&125& & \\
\texttt{39.7.d.b}&\texttt{G1N39k7G}&7&39&2&1&$\infty$&0&1&5&25&125&1 &5 \\
\hline
        \end{tabular}}
    \end{minipage}
    \label{tab:iw_table}
\end{table}

\begin{table}[]
\caption{\label{table2} $\lambda$-invariants attached to newforms of level $N\leq 40$ and weight $k\leq 7$ that are non-ordinary at $p=7$.
}
    \centering
    \begin{minipage}[t]{-0.5\textwidth}
    	\vspace{-6.9cm}
        \scalebox{0.65}{
        \begin{tabular}{|c c c c c c c c c c c c c c|}
        \hline
\multicolumn{14}{|c|}{$p=7$} \\
            \hline
            LMFDB&Magma & $k$ & $N$ & $d$ &$i$ & slope & 0 & 1 & 2 & 3 & 4&  $\lambda^\sharp$ & $\lambda^\flat$ \\
            \hline
\texttt{13.2.e.a}&\texttt{G1N13k2A}&2&13&2&1&$\infty$&0&0&6&42&300&0&0\\
\texttt{13.2.e.a}&\texttt{G1N13k2A}&2&13&2&2&$\infty$&0&0&6&42&300&0&0\\
\texttt{15.2.a.a}&\texttt{G0N15k2A}&2&15&1&1&$\infty$&0&0&6&42&300&0&0\\
\texttt{20.2.e.a}&\texttt{G1N20k2B}&2&20&2&1&$\infty$&0&0&6&42&300&0&0\\
\texttt{24.2.a.a}&\texttt{G0N24k2A}&2&24&1&1&$\infty$&0&0&6&42&300&0&0\\
\texttt{24.2.f.a}&\texttt{G1N24k2C}&2&24&2&1&$\infty$&$\infty$&0&8&42&302&2&0\\
\texttt{32.2.a.a}&\texttt{G0N32k2A}&2&32&1&1&$\infty$&0&0&6&42&300&0&0\\
\texttt{33.2.d.a}&\texttt{G1N33k2E}&2&33&2&1&$\infty$&$\infty$&0&7&42&301&1&0\\
\texttt{34.2.c.b}&\texttt{G1N34k2C}&2&34&2&1&$\infty$&0&0&6&42&300&0&0\\
\texttt{34.2.b.a}&\texttt{G1N34k2E}&2&34&2&1&$\infty$&$\infty$&0&7&42&301&1&0\\
\texttt{36.2.b.a}&\texttt{G1N36k2D}&2&36&2&1&$\infty$&$\infty$&0&7&42&301&1&0\\
\hline
\texttt{8.3.d.a}&\texttt{G1N8k3A}&3&8&1&1&$\infty$&0&1&12&85&600&0&1\\
\texttt{11.3.b.a}&\texttt{G1N11k3B}&3&11&1&1&$\infty$&0&1&12&85&600&0&1\\
\texttt{15.3.d.b}&\texttt{G1N15k3C}&3&15&1&1&$\infty$&0&1&12&85&600&0&1\\
\texttt{15.3.d.a}&\texttt{G1N15k3D}&3&15&1&1&$\infty$&0&1&12&85&600&0&1\\
\texttt{16.3.c.a}&\texttt{G1N16k3A}&3&16&1&1&$\infty$&0&1&12&85&600&0&1\\
\texttt{20.3.f.a}&\texttt{G1N20k3B}&3&20&2&1&1&0&1&12&85&600&0&1\\
\texttt{20.3.d.c}&\texttt{G1N20k3E}&3&20&2&1&$\infty$&0&1&12&85&600&0&1\\
\texttt{23.3.b.a}&\texttt{G1N23k3B}&3&23&3&1&$\infty$&0&1&12&85&600&0&1\\
\texttt{23.3.b.a}&\texttt{G1N23k3B}&3&23&3&2&$\infty$&0&1&12&85&600&0&1\\
\texttt{32.3.d.a}&\texttt{G1N32k3C}&3&32&1&1&$\infty$&0&1&12&85&600&0&1\\
\texttt{36.3.d.b}&\texttt{G1N36k3A}&3&36&1&1&$\infty$&0&1&12&85&600&0&1\\
\texttt{36.3.d.a}&\texttt{G1N36k3B}&3&36&1&1&$\infty$&0&1&12&85&600&0&1\\
\texttt{39.3.d.b}&\texttt{G1N39k3I}&3&39&2&1&$\infty$&0&1&12&85&600&0&1\\
\texttt{39.3.d.a}&\texttt{G1N39k3J}&3&39&2&1&$\infty$&0&1&12&85&600&0&1\\
\texttt{39.3.d.c}&\texttt{G1N39k3K}&3&39&4&1&1/2&0&1&12&85&600&0&1\\
\hline
\texttt{17.4.a.a}&\texttt{G0N17k4A}&4&17&1&1&1&0&2&18&128&900&0&2\\
\texttt{20.4.e.a}&\texttt{G1N20k4B}&4&20&2&1&$\infty$&0&2&18&128&900&0&2\\
\texttt{22.4.a.a}&\texttt{G0N22k4C}&4&22&1&1&1&0&2&18&128&900&0&2\\
\texttt{24.4.f.a}&\texttt{G1N24k4C}&4&24&2&1&$\infty$&0&2&18&128&900&0&2\\
\texttt{26.4.a.b}&\texttt{G0N26k4A}&4&26&1&1&1&0&2&18&128&900&0&2\\
\texttt{26.4.b.a}&\texttt{G1N26k4G}&4&26&4&1&1/2&0&2&18&128&900&0&2\\
\texttt{31.4.a.b}&\texttt{G0N31k4B}&4&31&5&1&2&0&2&18&128&900&0&2\\
\texttt{32.4.a.b}&\texttt{G0N32k4A}&4&32&1&1&$\infty$&0&2&18&128&900&0&2\\
\texttt{33.4.d.a}&\texttt{G1N33k4I}&4&33&2&1&$\infty$&0&2&18&128&900&0&2\\
\texttt{34.4.c.b}&\texttt{G1N34k4G}&4&34&6&2&1&0&2&18&128&900&0&2\\
\texttt{36.4.b.a}&\texttt{G1N36k4E}&4&36&2&1&$\infty$&0&2&18&128&900&0&2\\
\texttt{37.4.a.b}&\texttt{G0N37k4B}&4&37&5&2&1&0&2&18&128&900&0&2\\
\texttt{39.4.a.b}&\texttt{G0N39k4B}&4&39&2&1&1/2&0&2&18&128&900&0&2\\
\texttt{39.4.j.b}&\texttt{G1N39k4F}&4&39&4&3&1&0&2&18&128&900&0&2\\
\texttt{39.4.j.b}&\texttt{G1N39k4F}&4&39&4&4&1&0&2&18&128&900&0&2\\
\hline
        \end{tabular}}
    \end{minipage}
    \hfill
    \begin{minipage}[t]{0.4\textwidth}
        \scalebox{0.65}{
        \begin{tabular}{|c c c c c c c c c c c c c c|}
        \hline
\multicolumn{14}{|c|}{$p=7$} \\
            \hline
            LMFDB &Magma & $k$ & $N$ & $d$ &$i$ & slope & 0 & 1 & 2 & 3 & 4& $\lambda^\sharp$ & $\lambda^\flat$ \\
            \hline
\texttt{4.5.b.a}& \texttt{G1N4k5A}&5&4&1&1&$\infty$&0&3&24&171&1200&0&3\\
\texttt{8.5.d.a}&\texttt{G1N8k5A}&5&8&1&1&$\infty$&0&3&24&171&1200&0&3\\
\texttt{9.5.b.a}&\texttt{G1N9k5B}&5&9&2&1&1&0&3&24&171&1200&0&3\\
\texttt{11.5.b.a}&\texttt{G1N11k5B}&5&11&1&1&$\infty$&0&3&24&171&1200&0&3\\
\texttt{15.5.d.b}&\texttt{G1N15k5C}&5&15&1&1&$\infty$&0&3&24&171&1200&0&3\\
\texttt{15.5.d.a}&\texttt{G1N15k5D}&5&15&1&1&$\infty$&0&3&24&171&1200&0&3\\
\texttt{23.5.b.a}&\texttt{G1N23k5B}&5&23&3&1&$\infty$&0&3&24&171&1200&0&3\\
\texttt{23.5.b.a}&\texttt{G1N23k5B}&5&23&3&2&$\infty$&0&3&24&171&1200&0&3\\
\texttt{25.5.c.c}&\texttt{G1N25k5C}&5&25&4&1&1/2&0&3&24&171&1200&0&3\\
\texttt{25.5.c.b}&\texttt{G1N25k5D}&5&25&4&1&1&0&3&24&171&1200&0&3\\
\texttt{25.5.c.b}&\texttt{G1N25k5D}&5&25&4&2&1&0&3&24&171&1200&0&3\\
\texttt{30.5.f.a}&\texttt{G1N30k5C}&5&30&4&1&1&0&3&24&171&1200&0&3\\
\texttt{30.5.f.a}&\texttt{G1N30k5C}&5&30&4&2&1&0&3&24&171&1200&0&3\\
\texttt{32.5.d.a}&\texttt{G1N32k5D}&5&32&1&1&$\infty$&0&3&24&171&1200&0&3\\
\texttt{36.5.d.a}&\texttt{G1N36k5A}&5&36&1&1&$\infty$&0&3&24&171&1200&0&3\\
\texttt{36.5.d.c}&\texttt{G1N36k5B}&5&36&4&1&1/2&0&3&24&171&1200&0&3\\
\texttt{39.5.d.b}&\texttt{G1N39k5I}&5&39&1&1&$\infty$&0&3&24&171&1200&0&3\\
\texttt{39.5.d.a}&\texttt{G1N39k5J}&5&39&1&1&$\infty$&0&3&24&171&1200&0&3\\
\texttt{39.5.d.c}&\texttt{G1N39k5K}&5&39&2&1&$\infty$&0&3&24&171&1200&0&3\\
\texttt{39.5.d.c}&\texttt{G1N39k5K}&5&39&2&2&$\infty$&0&3&24&171&1200&0&3\\
\texttt{40.5.l.a}&\texttt{G1N40k5B}&5&40&2&1&1&0&3&24&171&1200&0&3\\
\texttt{40.5.l.b}&\texttt{G1N40k5C}&5&40&4&2&1&0&3&24&171&1200&0&3\\
\hline
\texttt{17.6.a.b}&\texttt{G0N17k6A}&6&17&1&1&1&0&4&30&214&1500&0&4\\
\texttt{17.6.a.a}&\texttt{G0N17k6B}&6&17&1&1&2&0&4&30&214&1500&0&4\\
\texttt{20.6.e.a}&\texttt{G1N20k6B}&6&20&2&1&$\infty$&0&4&30&214&1500&0&4\\
\texttt{22.6.a.a}&\texttt{G0N22k6C}&6&22&1&1&2&0&4&30&214&1500&0&4\\
\texttt{24.6.f.a}&\texttt{G1N24k6E}&6&24&2&1&$\infty$&0&4&30&214&1500&0&4\\
\texttt{26.6.a.c}&\texttt{G0N26k6B}&6&26&2&1&1&0&4&30&214&1500&0&4\\
\texttt{26.6.b.a}&\texttt{G1N26k6H}&6&26&2&1&1&0&4&30&214&1500&0&4\\
\texttt{32.6.a.b}&\texttt{G0N32k6A}&6&32&1&1&$\infty$&0&4&30&214&1500&0&4\\
\texttt{33.6.d.a}&\texttt{G1N33k6I}&6&33&2&1&$\infty$&0&4&30&214&1500&0&4\\
\texttt{34.6.c.a}&\texttt{G1N34k6G}&6&34&6&1&1/3&0&4&30&214&1500&0&4\\
\texttt{36.6.b.a}&\texttt{G1N36k6E}&6&36&2&1&$\infty$&0&4&30&214&1500&0&4\\
\texttt{39.6.a.a}&\texttt{G0N39k6A}&6&39&1&1&1&0&4&30&214&1500&0&4\\
\hline
\texttt{8.7.d.a}&\texttt{G1N8k7A}&7&8&1&1&$\infty$&0&5&36&257&1800&0&5\\
\texttt{11.7.b.a}&\texttt{G1N11k7B}&7&11&1&1&$\infty$&0&5&36&257&1800&0&5\\
\texttt{15.7.d.b}&\texttt{G1N15k7C}&7&15&1&1&$\infty$&0&5&36&257&1800&0&5\\
\texttt{15.7.d.a}&\texttt{G1N15k7D}&7&15&1&1&$\infty$&0&5&36&257&1800&0&5\\
\texttt{16.7.c.a}&\texttt{G1N16k7A}&7&16&1&1&$\infty$&0&5&36&257&1800&0&5\\
\texttt{20.7.f.a}&\texttt{G1N20k7B}&7&20&6&2&1&0&5&36&257&1800&0&5\\
\texttt{20.7.d.c}&\texttt{G1N20k7E}&7&20&2&1&$\infty$&0&5&36&257&1800&0&5\\
\texttt{23.7.b.a}&\texttt{G1N23k7B}&7&23&1&1&$\infty$&0&5&36&257&1800&0&5\\
\texttt{23.7.b.b}&\texttt{G1N23k7C}&7&23&2&1&$\infty$&0&5&36&257&1800&0&5\\
\texttt{32.7.d.a}&\texttt{G1N32k7D}&7&32&1&1&$\infty$&0&5&36&257&1800&0&5\\
\texttt{36.7.d.b}&\texttt{G1N36k7A}&7&36&1&1&$\infty$&0&5&36&257&1800&0&5\\
\texttt{36.7.d.a}&\texttt{G1N36k7B}&7&36&1&1&$\infty$&0&5&36&257&1800&0&5\\
\texttt{39.7.d.c}&\texttt{G1N39k7H}&7&39&2&1&$\infty$&0&5&36&257&1800&0&5\\
\texttt{39.7.d.a}&\texttt{G1N39k7I}&7&39&2&1&$\infty$&0&5&36&257&1800&0&5\\
\hline
        \end{tabular}}
    \end{minipage}
    \label{tab:iw_table}
\end{table}

\begin{table}[]
\begin{center}
\caption{\label{table3} $\lambda$-invariants at $p=5$ attached to various newforms of the same level but different weights. Forms in the same block are congruent modulo 5. }
\scalebox{0.7}{
\begin{tabular}{|c|c||c|c|c|l|c||c|c|c|c|c|}
\hline
\multicolumn{7}{|c||}{$p=5$} & \multicolumn{5}{c|}{$\lambda(\Theta_n)$} \\
\hline
LMFDB &Magma & $k$ & $N$ & $d$ & $i$ & slope & 0 & 1 & 2 & 3 & 4 \\ 
\hline 
\hline
\href{https://www.lmfdb.org/ModularForm/GL2/Q/holomorphic/9/4/a/a/}{\texttt{9.4.a.a}}&\texttt{G0N9k4A}  & 4  & 9 & 1&1& $\infty$ & 0 & 2 & 12 & 62 & 312  \\
\href{https://www.lmfdb.org/ModularForm/GL2/Q/holomorphic/9/8/a/b/}{\texttt{9.8.a.b}}&\texttt{G0N9k8B}  & 8  & 9 & 2&1& 1/2 & 0 & 2 & 12 & 62 & 312   \\
\href{https://www.lmfdb.org/ModularForm/GL2/Q/holomorphic/9/12/a/c/}{\texttt{9.12.a.c}}&\texttt{G0N9k12C}  & 12  & 9 & 2&1& 1/2 &  0&  2&  12 & 62  & 312  \\
\href{https://www.lmfdb.org/ModularForm/GL2/Q/holomorphic/9/16/a/b/}{\texttt{9.16.a.b}}&\texttt{G0N9k16A} & 16 & 9 &1&1& $\infty$ &0 & 4 & 22 & 112 & 562  \\
\href{https://www.lmfdb.org/ModularForm/GL2/Q/holomorphic/9/16/a/e/}{\texttt{9.16.a.e}}&\texttt{G0N9k16E}  & 16  & 9 & 2&1&  1/2& 0 &2  &12 & 62 &  312  \\
\href{https://www.lmfdb.org/ModularForm/GL2/Q/holomorphic/9/20/a/d/}{\texttt{9.20.a.d}}&\texttt{G0N9k20D}  & 20  & 9 & 4&1&1/2  &  0& 2& 12 & 62 &  312  \\
\href{https://www.lmfdb.org/ModularForm/GL2/Q/holomorphic/9/20/a/d/}{\texttt{9.20.a.d}}&\texttt{G0N9k20D}  & 20  & 9 & 4&2& 2 & 0 & 2& 12 & 62 & 312  \\
\href{https://www.lmfdb.org/ModularForm/GL2/Q/holomorphic/9/20/a/d/}{\texttt{9.20.a.d}}&\texttt{G0N9k20D}  & 20  & 9 & 4&3& 2 &  0 & 2 & 12 & 62 & 312  \\
\href{https://www.lmfdb.org/ModularForm/GL2/Q/holomorphic/9/24/a/d/}{\texttt{9.24.a.d}}&\texttt{G0N9k24D}  & 24  & 9 & 4&1& 3/2 &  0 & 2 & 12 & 62 & 312  \\
\href{https://www.lmfdb.org/ModularForm/GL2/Q/holomorphic/9/24/a/d/}{\texttt{9.24.a.d}}&\texttt{G0N9k24D}  & 24  & 9 & 4&2&  1&  0& 2 & 12 & 62 &  312    \\
\href{https://www.lmfdb.org/ModularForm/GL2/Q/holomorphic/9/28/a/a/}{\texttt{9.28.a.a}}&\texttt{G0N9k28A} & 28 & 9 &1&1& $\infty$ &  0 & 2 & 12 & 62 & 312   \\
\href{https://www.lmfdb.org/ModularForm/GL2/Q/holomorphic/9/28/a/e/}{\texttt{9.28.a.e}}&\texttt{G0N9k28E}  & 28  & 9 & 4&1&  1/2& 0 &2  & 12 & 62 & 312   \\
\href{https://www.lmfdb.org/ModularForm/GL2/Q/holomorphic/9/28/a/e/}{\texttt{9.28.a.e}}&\texttt{G0N9k28E}  & 28  & 9 & 4&2& 3 & 0 & 2 & 12& 62 & 312   \\
\hline
\hline
\href{https://www.lmfdb.org/ModularForm/GL2/Q/holomorphic/27/4/a/b/}{\texttt{27.4.a.b}}&\texttt{G0N27k4A}  &  4 & 27 &1&1& 1& 0 & 2 & 12 & 62 & 312   \\
\href{https://www.lmfdb.org/ModularForm/GL2/Q/holomorphic/27/8/a/b/}{\texttt{27.8.a.b}}&\texttt{G0N27k8E}  & 8  & 27  & 2&1&1/2& 0 &  2&  12&  62&312   \\
\href{https://www.lmfdb.org/ModularForm/GL2/Q/holomorphic/27/12/a/e/}{\texttt{27.12.a.e}}&\texttt{G0N27k12D}  &12   & 27  &4&1& 1/2& 0  &2  & 12 & 62 &312   \\
\href{https://www.lmfdb.org/ModularForm/GL2/Q/holomorphic/27/16/a/b/}{\texttt{27.16.a.b}}&\texttt{G0N27k16C}  & 16  & 27 &5&1& 1/2& 0 & 2 &12  &62 & 312   \\
\href{https://www.lmfdb.org/ModularForm/GL2/Q/holomorphic/27/16/a/b/}{\texttt{27.16.a.b}}&\texttt{G0N27k16C}  & 16  & 27 &5&2& 3& 0 &  4& 22 &112  & 562  \\
\href{https://www.lmfdb.org/ModularForm/GL2/Q/holomorphic/27/20/a/b/}{\texttt{27.20.a.b}}&\texttt{G0N27k20E}  & 20  &27  & 6&1&2& 0  &  2&  12& 62  &312 \\
\href{https://www.lmfdb.org/ModularForm/GL2/Q/holomorphic/27/20/a/b/}{\texttt{27.20.a.b}}&\texttt{G0N27k20E}  & 20  &27  & 6&2 & 1/2 &  0& 2&  12&  62&312   \\
\href{https://www.lmfdb.org/ModularForm/GL2/Q/holomorphic/27/24/a/c/}{\texttt{27.24.a.c}}&\texttt{G0N27k24E}  &24   & 27 & 8&1&2&  0& 2 & 12 & 62&312  \\
\href{https://www.lmfdb.org/ModularForm/GL2/Q/holomorphic/27/24/a/c/}{\texttt{27.24.a.c}}&\texttt{G0N27k24E}  &24   & 27 & 8&4 & 1&0  &2  &12  &  62&312    \\
\href{https://www.lmfdb.org/ModularForm/GL2/Q/holomorphic/27/28/a/c/}{\texttt{27.28.a.c}}&\texttt{G0N27k28B}  & 28  & 27 & 9&1&1/2&  0&  2&12  &   62&312     \\
\href{https://www.lmfdb.org/ModularForm/GL2/Q/holomorphic/27/28/a/c/}{\texttt{27.28.a.c}}&\texttt{G0N27k28B}  & 28  & 27 & 9&4&3&  0& 2 &12  &  62&312     \\
\hline
\end{tabular}
}
\end{center}
\end{table}

\begin{table}[]
\begin{center}
\caption{\label{table4} $\lambda$-invariants at $p=7$ attached to various newforms of the same level but different weights. Forms in the same block are congruent modulo 7. }
\scalebox{0.7}{
\begin{tabular}{|c|c||c|c|c|c|c||c|c|c|c|}
\hline
\multicolumn{7}{|c||}{$p=7$} & \multicolumn{4}{c|}{$\lambda(\Theta_n)$} \\
\hline
LMFDB &Magma  & $k$ & $N$ & $d$ &$i$& slope & 0 & 1 & 2 & 3 \\ 
\hline
\hline
\href{https://www.lmfdb.org/ModularForm/GL2/Q/holomorphic/8/3/d/a/}{\texttt{8.3.d.a}}& \texttt{G1N8k3A}  &  3 &8  & 1&1&$\infty$ & 0 & 1 & 12 &85   \\
\href{https://www.lmfdb.org/ModularForm/GL2/Q/holomorphic/8/9/d/b/}{\texttt{8.9.d.b}}& \texttt{G1N8k9B}  &  9 &8  &6 &1&$1/2$ & 0 & 1 & 12 &85   \\
\href{https://www.lmfdb.org/ModularForm/GL2/Q/holomorphic/8/15/d/a/}{\texttt{8.15.d.a}}& \texttt{G1N8k15A} & 15 & 8 & 1 &1&$\infty$&  0& 6 & 43 & 306   \\
\href{https://www.lmfdb.org/ModularForm/GL2/Q/holomorphic/8/51/d/a/}{\texttt{8.51.d.a}}& \texttt{G1N8k51A} & 51 & 8 & 1 &1&$\infty$& 0& 1&12 & 85    \\
\hline
\hline
\href{https://www.lmfdb.org/ModularForm/GL2/Q/holomorphic/32/4/a/b/}{\texttt{32.4.a.b}}& \texttt{G0N32k4A}  & 4  & 32 & 1&1&$\infty$ & 0 & 2 & 18 & 128  \\
\href{https://www.lmfdb.org/ModularForm/GL2/Q/holomorphic/32/10/a/d/}{\texttt{32.10.a.d}}& \texttt{G0N32k10B}  & 10  & 32 &  2&1&1/2&0  & 2 & 18 & 128  \\
\href{https://www.lmfdb.org/ModularForm/GL2/Q/holomorphic/32/16/a/d/}{\texttt{32.16.a.d}}& \texttt{G0N32k16C}  & 16  & 32 &4 &1&1/2 & 0 & 2 &18  & 128 \\
\href{https://www.lmfdb.org/ModularForm/GL2/Q/holomorphic/32/22/a/a/}{\texttt{32.22.a.a}}&\texttt{G0N32k22A} & 22 & 32 & 1 &1&$\infty$& 0 & 6 & 44 & 312  \\
(not in LMFDB)&\texttt{G0N32k52A} & 52 & 32 & 1 &1&$\infty$& 0 & 2  & 18 & 128   \\
\hline
\hline
\href{https://www.lmfdb.org/ModularForm/GL2/Q/holomorphic/4/5/b/a/}{\texttt{4.5.b.a}}& \texttt{G1N4k5A}  & 5  & 4 & 1&1&$\infty$ & 0 & 3& 24 & 171  \\
\href{https://www.lmfdb.org/ModularForm/GL2/Q/holomorphic/4/11/b/a/}{\texttt{4.11.b.a}}& \texttt{G1N4k11A}  & 11  & 4 & 4&2&1/2 & 0 & 3&24  &171    \\
\href{https://www.lmfdb.org/ModularForm/GL2/Q/holomorphic/4/17/b/b/}{\texttt{4.17.b.b}}& \texttt{G1N4k17B}  & 17  & 4 & 6&1&1/2 & 0 &3 &24  &171    \\
\href{https://www.lmfdb.org/ModularForm/GL2/Q/holomorphic/4/23/b/a/}{\texttt{4.23.b.a}}& \texttt{G1N4k23A}  & 23  & 4 & 10&3&1/2 &0  &3 & 24 & 171  \\
\href{https://www.lmfdb.org/ModularForm/GL2/Q/holomorphic/4/29/b/a/}{\texttt{4.29.b.a}}& \texttt{G1N4k29A} & 29 & 4 & 1 &1&$\infty$& 0 & 6 & 45 & 318   \\
\href{https://www.lmfdb.org/ModularForm/GL2/Q/holomorphic/4/29/b/b/}{\texttt{4.29.b.b}}& \texttt{G1N4k29B} & 29 & 4 & 12 &1&1/2& 0 & 3 & 24 & 171   \\
\href{https://www.lmfdb.org/ModularForm/GL2/Q/holomorphic/4/53/b/a/}{\texttt{4.53.b.a}}& \texttt{G1N4k53A} & 53 & 4 &1  &1&$\infty$& 0 & 3 & 24 & 171   \\
\hline
\hline
\href{https://www.lmfdb.org/ModularForm/GL2/Q/holomorphic/32/6/a/b/}{\texttt{32.6.a.b}}& \texttt{G0N32k6A}  &  6 &32  & 1&1&$\infty$ & 0 & 4& 30 &214    \\
\href{https://www.lmfdb.org/ModularForm/GL2/Q/holomorphic/32/12/a/c/}{\texttt{32.12.a.c}}& \texttt{G0N32k12C} & 12&32  &2 &1&1/2 &  0 & 4& 30 & 214  \\
\href{https://www.lmfdb.org/ModularForm/GL2/Q/holomorphic/32/18/a/e/}{\texttt{32.18.a.e}}& \texttt{G0N32k18D} & 18 &32  &4  &2&1/2& 0 & 4&  30 &214    \\
\href{https://www.lmfdb.org/ModularForm/GL2/Q/holomorphic/32/24/a/d/}{\texttt{32.24.a.d}}& \texttt{G0N32k24C} & 24 &32  & 6&2&1/2 & 0 &4 & 30 & 214   \\
\href{https://www.lmfdb.org/ModularForm/GL2/Q/holomorphic/32/30/a/b/}{\texttt{32.30.a.b}}& \texttt{G0N32k30B}& 30 & 32 &6 &2&1/2 & 0 & 4  &30 &214    \\
(not in LMFDB)& \texttt{G0N32k36A}& 36 & 32 & 1&1&$\infty$ & 0 & 6  &46 & 324   \\ \hline
\end{tabular}
}
\end{center}
\end{table}

\clearpage

\bibliographystyle{alpha}
\bibliography{references}

\end{document}